\newcommand{\eqref}[1]{(\ref{#1})}
\newcommand{\bZ}{\mathbb{Z}}
\newcommand{\cN}{\mathcal{N}}
\newcommand{\cE}{\mathcal{E}}
\renewcommand{\P}{\mathbf{P}}
\newcommand{\mB}{\bar{B}}
\newcommand{\lsquare}{\mbox{\scalebox{2}[1]{$\square$}}}
\newcommand{\pred}{\mathtt{Pred}}
\newcommand{\burnin}{\mathtt{burnin}}
\newcommand{\cross}{\mathtt{cross}}
\newcommand{\ind}{\mathbf{1}}
\newcommand{\mmod}{\operatorname{mod}}
\newcommand{\seeds}{\mathtt{Seeds}}
\newcommand{\add}{\textit{1~Or~3}}
\newcommand{\webxor}{\textit{Web-Xor}}
\newcommand{\pig}{\textit{Piggyback}}
\newcommand{\pbox}{\textit{Perturbed Box 13}}
\newcommand{\eqd}{\stackrel d=}
\newcommand{\df}{\emph}
\newtheorem{theorem}{Theorem}[section]
\newtheorem{lemma}[theorem]{Lemma}
\newtheorem{prop}[theorem]{Proposition}
\newtheorem{cor}[theorem]{Corollary}
\newtheorem{conjecture}[theorem]{Conjecture}
\begin{document}

\begin{frontmatter}

\title{Percolation and disorder-resistance in cellular~automata}
\runtitle{Cellular automata}

\begin{aug}
\author[A]{\fnms{Janko} \snm{Gravner}\corref{}\thanksref{T1}\ead[label=e1]{gravner@math.ucdavis.edu}}
\and
\author[B]{\fnms{Alexander E.} \snm{Holroyd}\ead[label=e2]{holroyd@microsoft.com}}
\runauthor{J. Gravner and A.~E. Holroyd}
\affiliation{University of California and Microsoft Research}
\address[A]{Department of Mathematics\\
University of California\\
Davis, California 95616\\
USA\\
\printead{e1}} 
\address[B]{Microsoft Research\\
1 Microsoft Way\\
Redmond, Washington 98052\\
USA\\
\printead{e2}}
\end{aug}
\thankstext{T1}{Supported in part by NSF Grant DMS-02-04376 and the Republic of Slovenia's Ministry of Science program P1-285.}

\received{\smonth{5} \syear{2013}}
\revised{\smonth{1} \syear{2014}}

%
\begin{abstract}
We rigorously prove a form of disorder-resistance for a class of
one-dimensional cellular automaton rules, including some that arise as
boundary dynamics of two-dimensional solidification rules. Specifically,
when started from a random initial seed on an interval of length $L$, with
probability tending to one as $L\to\infty$, the evolution is a \textit{replicator}.
That is, a region of space--time of density one is filled
with a
spatially and temporally periodic pattern, punctuated by a finite set of
other finite patterns repeated at a fractal set of locations. On the other
hand, the same rules exhibit provably more complex evolution from some seeds,
while from other seeds their behavior is apparently chaotic. A principal
tool is a new variant of percolation theory, in the context of
additive cellular automata from random initial states.
\end{abstract}

%
\begin{keyword}[class=AMS]
\kwd{60K35}
\kwd{37B15}
\end{keyword}
\begin{keyword}
\kwd{Additivity}
\kwd{cellular automaton}
\kwd{replicator}
\kwd{quasireplicator}
\kwd{ether}
\kwd{percolation}
\end{keyword}

\end{frontmatter}

\section{Introduction}\label{sec1}

Cellular automata (CA) started from \textit{seeds}, that is, finite
perturbations
of a quiescent state, have been the subject of much empirical analysis,
starting with \cite{Wol1}. The observed behavior falls roughly into four
categories: (a)~the perturbation remains \textit{localized} in the
sense that
it never affects sites outside a bounded interval; (b) a \textit{periodic}
structure develops and spreads; (c) a \textit{replicating} (also called
\textit{nested} or \textit{fractal}) structure develops, with a
recursive (but
sometimes complicated) description; (d) unpredictable \textit{chaotic} (or
\textit{complex}) growth generates a space--time configuration with apparent
characteristics of random fields. Many CA are capable of behavior in
multiple categories depending on the choice of seed, and this is true even
for some of the very simplest one-dimensional CA. An example is the
\textit{Exactly 1} rule, in which a cell is alive whenever exactly
one of
itself and its two neighbors were alive at the previous generation.
\textit{Exactly 1} is capable of periodic, replicating, and chaotic behavior
for different seeds; see \cite{GG3}.

If a particular CA is capable of chaotic behavior from some initial
seed, it
appears natural to conclude, by analogy with the second law of
thermodynamics, that such behavior should be generic for that CA, in the
sense that almost all sufficiently long seeds yield chaotic evolution.
Shadowing results from dynamical systems \cite{Pil}, with their general
message of stability of chaotic trajectories, would also tend to support
such a conclusion. Indeed, strong empirical evidence confirms that chaotic
behavior is prevalent for many CA including \textit{Exactly 1}; see
\cite{GG3}.

In this article, we exhibit a class of one-dimensional CA rules for
which we
rigorously prove that the \emph{opposite} conclusion holds. Typical (random)
long seeds self-organize into replicating structures, while exceptional seeds
yield more complex behavior, including apparently chaotic evolution.

We focus on one-dimensional range-$2$ CA rules with $3$ states
(although our
techniques in principle apply to more general one-dimensional rules).
Thus, the
configuration of the CA at time $t\in\{0,1,2,\ldots\}$ is an element
$\xi_t=(\xi_t(x))_{x\in\bZ}$ of $\{0,1,2\}^\bZ$, and for a given initial
configuration $\xi_0$, the evolution is given by
\[
\xi_{t+1}(x)=f \bigl(\xi_t(x-2),\xi_t(x-1),
\xi_t(x),\xi_t(x+1),\xi _t(x+2) \bigr)
\]
for all $x,t$ and a fixed
function $f$ (the CA rule). (In many cases, the dependence on $\xi_t$ will
actually be restricted to the range-$1$ neighborhood $x-1,x,x+1$.) We
sometimes write $\xi(x,t)=\xi_t(x)$ for the state of $\xi$ at the space--time
point $(x,t)\in\bZ\times[0,\infty)$. In keeping with standard convention,
diagrams of space--time evolution are drawn with the space coordinate $x$
increasing from left to right, and the time coordinate $t$ increasing from
top to bottom.

A key supporting role will be played by the \add\ CA, a simple
$2$-state rule
denoted by $\lambda_t$, and defined as follows. The states are $0$ and $1$,
and the evolution is
\[
\lambda_{t+1}(x)=\lambda_t(x-1)+\lambda_t(x)+
\lambda_t(x+1) \mmod 2.
\]
As is well known \cite{MOW}, the additive structure of this rule enables many of its
characteristics to be fully understood. (See Figure~\ref{single-schematic}
below for an illustration.)
%
%
\begin{figure}[t]

\includegraphics{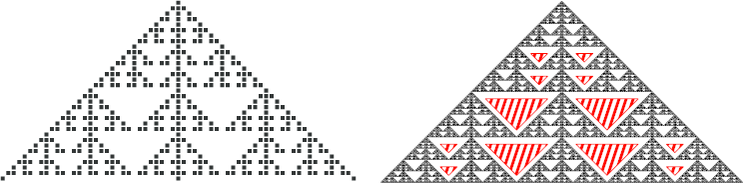}

\caption{Left: the configuration $\lambda^\bullet$ of \textit{1 Or 3},
started from a
single occupied cell, up to time $t=32$.
Right: schematic depiction of a replicator. The striped regions are
filled with a
doubly periodic ether. The thickness of the white ``buffer zones''
remains constant for all time.}
\label{single-schematic}
\end{figure}

%
%
\begin{figure}[t]

\includegraphics{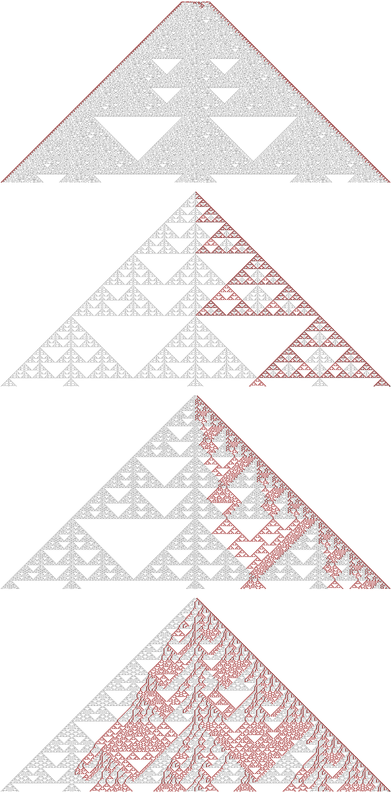}

\caption{Four configurations of \textit{Web-Xor}. The first (top) example,
a replicator with zero ether,
starts from a random string of 64 $1$s and $2$s. The second and third examples,
with respective seeds $12$ and
$11111012$, are quasireplicators. The bottom example, with
seed $1100112$, is apparently chaotic.}
\label{webxor}
\end{figure}

We consider $3$-state CA rules with the following special property. For any
configuration $\xi$, if we define $\lambda_t(x)=\ind[\xi_t(x)=1]$
for all
$x,t$, then $\lambda$ evolves precisely according to the \add\ CA. We also
assume that state $0$ is quiescent, that is, if $\xi_0\equiv0$ then
$\xi_1\equiv0$. We call any CA rule satisfying these two conditions a
\df{web CA}. The idea is that the $1$s form an additive ``web'' which
is not
influenced by the distinction between $0$s and $2$s, while the web may affect
the $2$s. As we will see later, web CA also arise in analysis of
two-dimensional solidification CA. We will usually be interested in
evolution from a \df{seed}, that is, an initial configuration $\xi
_0$ with
finite support.

One of the simplest web CA, which we call \webxor, is defined by setting
$\xi_t(x)=2$ if and only if $\lambda_t(x)=0$ and there is a exactly
one $2$
among $\xi_{t-1}(x-1),\xi_{t-1}(x+1)$. (Together with the web CA condition,
this is sufficient to specify the rule.) Thus, $2$s perform a $2$-neighbor
exclusive-or rule on the points that are not occupied by $1$s.
Figure~\ref{webxor} illustrates the evolution of \webxor\ from four
different seeds. (States are always colored as: $0$ white; $1$ black or
grey; $2$ another color depending on the rule.) Our results imply that
\emph{typical} seeds result in behavior similar to the first picture. More
specifically, we will prove that for certain classes of web CA, evolution
from long random seeds yields with high probability a space--time
configuration that is periodic except within some finite distance of an
additive web. To state this conclusion precisely, we need some more notation.

An \df{ether} is an element $\eta$ of $\{0,2\}^{\bZ^2}$ that is
periodic in
both coordinates. Two ethers are \df{equivalent} if one can be
obtained from
the other via some translation of~$\bZ^2$. In a CA configuration $\xi$,
we say that a
set $K\subseteq\bZ\times[0,\infty)$ is \df{filled with} $\eta$ if
$\xi$
agrees with some ether equivalent to $\eta$ on $K$. Let $\lambda
^\bullet
$ be
the \add\ CA started from the seed consisting of a single $1$ at the origin,
and let $\Lambda=\{(x,t)\dvtx\lambda^\bullet(x,t)=1\}$ be its
support. See
Figure~\ref{single-schematic}. Let $\Lambda(r)\subset\bZ^2$ be the
set of
space--time points at $\ell^1$-distance at most $r$ from $\Lambda$.

%
For a given CA, we say that a seed $\xi_0$ (or equivalently the resulting
configuration~$\xi$) is a \df{replicator} of \df{thickness} $r$ and ether
$\eta$ if each bounded component of $\bZ^2\setminus\Lambda(r)$ is filled
with $\eta$. See Figure~\ref{single-schematic}. It is a straightforward
fact that $\Lambda(r)$ has density $0$ as a subset of $\bZ^2$ for any $r$.
Therefore, in a replicator, the density of $2$s within the cone
$\{(x,t)\dvtx|x|\leq t\}$ equals the density of the ether.
Furthermore, it may
be shown that for any replicator (of any CA), the configuration $\xi$
can be
fully described in terms of a finite set of local patterns that are repeated
at infinitely many locations prescribed by $\Lambda$. (This is the reason
for the name replicator.) For more details, we refer the reader to
\cite{GGP}, where the concept was introduced.

Our results will apply to web CA rules satisfying two conditions which we
call \textit{diagonal-compliance} and \textit{wide-compliance}. The
conditions state
that flow of information concerning the distinction between $0$s and
$2$s is
blocked by certain local patterns of $1$s. The formal statements of the
conditions are straightforward but somewhat technical, and we therefore
postpone them
to the next section. For now, we note that \webxor\ is diagonal-compliant.

A \df{uniformly random binary seed} on $[0,L]$ is an initial configuration
$\xi_0$ in which $\xi_0(x)$ takes values $0,1$ with equal probabilities
independently for all $x\in[0,L]$, and $0$ outside $[0,L]$.

%
\begin{theorem}[(Replication from random seeds)]\label{showcase}
Consider a web CA that is either diagonal-compliant or wide-compliant,
started from a uniformly random binary seed on $[0,L]$. There exist a
random variable $R_L$ taking values in $[0,\infty]$, and a random ether
$\eta_L$ (both deterministic functions of the seed), with the following
properties. We have $\P(R_L=\infty)\to0$ as $L\to\infty$, and
indeed the
sequence $(R_L)_{L\ge0}$ is tight. On the event $R_L<\infty$, the
configuration $\xi$ is a replicator of thickness $R_L+L$ and ether
$\eta_L$.
Furthermore, if any finite set of 0s in $\xi_0$ are changed into 2s,
the same
statement holds with the same $R_L$ and $\eta_L$.
\end{theorem}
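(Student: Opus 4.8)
The plan is to deduce the final clause from a robustness property built into the proof of the first part, using that the change $0\mapsto 2$ leaves the web completely undisturbed. First, $\lambda_t(x)=\ind[\xi_t(x)=1]$ evolves by the \add\ CA no matter where the $0$s and $2$s sit, so the perturbed seed $\xi_0'$, obtained from $\xi_0$ by turning a finite set of $0$s into $2$s, has the same web $\lambda$ as $\xi_0$. Hence $\Lambda$, the sets $\Lambda(r)$, and the partition of $\bZ^2$ into $\Lambda(R_L+L)$ and the bounded and unbounded components of its complement are all unchanged; and since the first part's $R_L$ and $\eta_L$ can be taken to depend on the web alone — natural, as the web is a function of the $1$s only and the whole argument is web-driven — these are unchanged as well.

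The substance is that the first part's proof should establish, on the event $R_L<\infty$, a \emph{$2$-robust} replication property of the web: for the given $\lambda$, and for \emph{any} placement of $2$s on the non-$1$ sites of the initial configuration, the resulting evolution agrees with one fixed translate of $\eta_L$ on each bounded component $C$ of $\bZ^2\setminus\Lambda(R_L+L)$. This is what \emph{diagonal-compliance} and \emph{wide-compliance} are designed for: the blocking patterns of $1$s along the past boundary of $C$, whose density far from the seed is what the percolation estimate controls, collapse any incoming $2$-signal to boundedly many possibilities, and for a cell bordered only by such patterns — the generic situation outside $\Lambda(R_L+L)$ — the realized possibility is forced to be the ether-compatible one whatever entered. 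For \webxor\ and the other explicitly linear rules this is especially clean: given $\lambda$, the indicator $a_t(x)=\ind[\xi_t(x)=2]$ evolves $\mathbb{F}_2$-linearly, so $\xi'$ and $\xi$ differ exactly at the support of the web-linear evolution $\Delta$ of the finite difference seed of added $2$s, and the task reduces to showing that $\Delta$ lives inside $\Lambda(R_L+L)$ together with the unbounded components of its complement — i.e.\ that a finite packet of $2$s launched anywhere cannot lodge permanently in any bounded complementary component, but is absorbed into the web or carried off to infinity.

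Granting the $2$-robust replication property, the final clause is immediate: apply it with $\xi_0'$, whose set of $2$s is a finite set of non-$1$ sites; on the same event and with the same $R_L,\eta_L$, the configuration $\xi'$ agrees with $\eta_L$ on every bounded component of $\bZ^2\setminus\Lambda(R_L+L)$, hence is a replicator of thickness $R_L+L$ and ether $\eta_L$.

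The main obstacle is proving the $2$-robustness within the first part, rather than only its (in fact $2$-free) binary-seed special case: one must check that the blocking patterns really do seal every far-from-seed bounded component against \emph{all} $2$-signals, including the slope-$\pm1$ signal emitted by a $2$ placed far outside $[0,L]$, which reaches the web only after crossing the clean edge of $\Lambda$'s light cone. In the linear case this is an absorption estimate for the web-linear dynamics applied to an arbitrary finite seed; in general it amounts to threading an arbitrary finite $2$-perturbation through the same percolation argument and verifying that the ``clean cell implies ether'' step holds for every incoming configuration. Either way the crux is decoupling the far-field $2$-behavior from the $2$-content of the seed, which the compliance hypotheses are precisely tailored to make possible.
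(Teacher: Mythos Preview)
Your proposal addresses only the ``furthermore'' clause, treating the main replication claim as a black box whose proof you hope has a certain shape. That is already a gap: the theorem asks for the construction of $R_L$ and $\eta_L$ and the tightness of $R_L$, none of which you supply. More importantly, even for the final clause your argument stops at exactly the point where the work lies. You correctly observe that $\lambda$ is unchanged under $0\mapsto 2$, and you assert that ``the first part's proof should establish'' a $2$-robust replication property, but you never say why it does; the paragraph on ``blocking patterns along the past boundary of $C$'' and the linear-case digression are heuristics, not arguments, and your final paragraph explicitly names the obstacle without resolving it.

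In the paper the $2$-robustness is not an add-on but is built into the very definition of $R_L$. One sets $M$ to be the least $m$ with $2^{m+1}\le L$ for which the event $\cross(m)$ of Proposition~\ref{no-crossing-into-void} fails; this is a $\lambda$-only event, and Proposition~\ref{no-crossing-into-void} gives tightness. On $\cross(M)^c$ there is no diagonal (resp.\ wide) path from the strip $J_{L,M}(V)$ to the perturbed void $W_L(V)$, for any void $V$. But $J_{L,M}(V)$ is precisely the backward light cone of the top row of $W_L(V)$ at distance $2^M$, so any path from $\bZ\times\{0\}$ to that top row must cross it; hence there is no such path at all, from anywhere in the initial configuration --- in particular from any site where a $0$ was changed to a $2$, inside or outside $[0,L]$. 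Lemma~\ref{compliance} then says the state on the top row of $W_L(V)$ depends only on the first level, which by Lemma~\ref{above-voids-general}(ii) is a segment of the same periodic string $A^\infty$ for every void. The top row is therefore a segment of a fixed periodic $(A')^\infty$ in $\{0,2\}$, independent of any $2$s in the seed; the ether $\eta_L$ is what this periodic row produces after its burn-in time, and $R_L=\burnin(3\cdot 2^M)+1$. The clean separation you were hoping for is thus not something to be checked after the fact: it is the content of the non-percolation estimate together with a single application of the compliance lemma, and the $\mathbb F_2$-linear special case is unnecessary.
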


Web CA rules may be further classified in the following way, which has
implications for their production of ethers. A CA has \df{no spontaneous
birth} (of 2s) if whenever $\xi_0$ contains no 2s, $\xi_1$ also
contains no
2s. \webxor\ has no spontaneous birth. Figure~\ref{piggy} shows four
possible evolutions of a CA rule called \pig\ (to be defined in the next
section) that is wide-compliant and has spontaneous birth.
%
%
\begin{figure}

\includegraphics{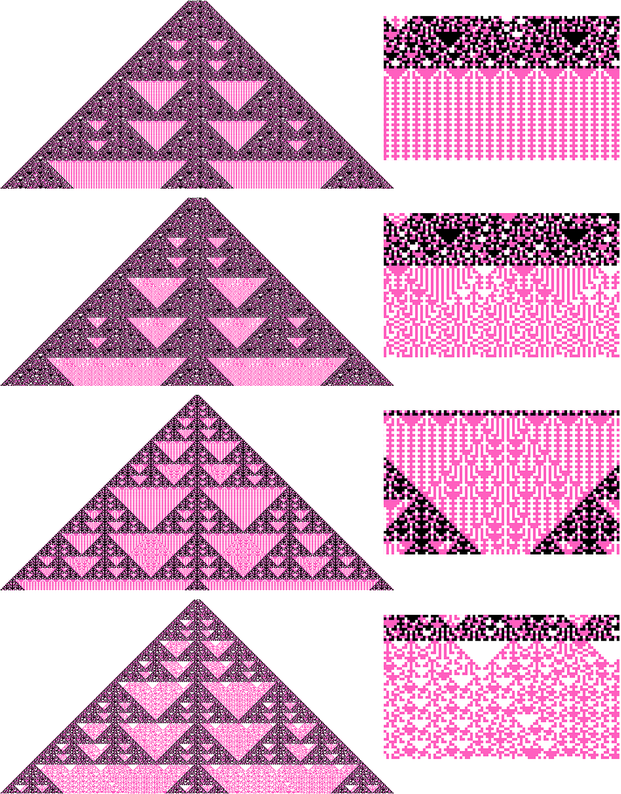}

\caption{Four examples of \textit{Piggyback} evolution: two replicators
(with enlarged
regions showing different ethers) from random seeds of length 30; a
quasireplicator
with seed $11111$; and an apparently chaotic example with seed
$100011011$.}\label{piggy}
\end{figure}

%
%
%
%
%
%
%
%
%
%

%
\begin{theorem}[(Trivial and nontrivial ethers)]\label{ether-births}
Assuming the conditions of Theorem~\ref{showcase}, $R_L$ can be chosen to
have the following additional properties:
%
\begin{longlist}[(ii)]
%
\item[(i)] If the CA rule has no spontaneous birth, then $\eta_L\equiv0$
whenever $R_L<\infty$.
\item[(ii)] Suppose that the CA rule has spontaneous birth. If for some
deterministic ether $\eta$ we have $R_L<\infty$ and $\eta_L=\eta$ for
\emph{some} binary seed, then for uniformly random binary seeds we have
\[
\liminf_{L\to\infty} \P(R_L<\infty\mbox{ and }
\eta_L=\eta)>0.
\]
\end{longlist}
%
\end{theorem}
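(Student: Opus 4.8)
A uniformly random binary seed contains no $2$s, and since the rule has no spontaneous birth the property ``contains no $2$s'' propagates from $\xi_t$ to $\xi_{t+1}$; by induction $\xi$ has no $2$s anywhere. Now suppose $R_L<\infty$, so that $\xi$ is a replicator of ether $\eta_L$. As observed above, the density of $2$s within the cone $\{(x,t):|x|\le t\}$ then equals the density of $\eta_L$, so that density is $0$; since $\eta_L$ is an element of $\{0,2\}^{\bZ^2}$ periodic in both coordinates, a density-$0$ ether is the all-$0$ ether, giving $\eta_L\equiv 0$ whenever $R_L<\infty$.

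\textbf{Part (ii): set-up and strategy.} By the last clause of Theorem~\ref{showcase}, switching $0$s of the seed to $2$s changes neither $R_L$ nor $\eta_L$, so both depend only on the set of $1$s, $S:=\{x:\xi_0(x)=1\}$; write $R_L=R(S)$ and $\eta_L=\cE(S)$ (the latter defined when $R(S)<\infty$). For a uniformly random binary seed $S$ is a uniformly random subset of $[0,L]\cap\bZ$, and the hypothesis supplies a finite set $S_0$ with $R(S_0)<\infty$ and $\cE(S_0)=\eta$. The plan is to prove that $\cE$ \emph{factors through a bounded amount of data} about $S$: a function $v(S)$ taking finitely many values, with $\cE(S)=\cE_*\!\bigl(v(S)\bigr)$ for a fixed map $\cE_*$ whenever $R(S)<\infty$. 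The natural candidate for $v$ is the conserved parity of the \add\ CA: summing the update rule over space gives $\sum_x\lambda_{t+1}(x)\equiv\sum_x\lambda_t(x)\pmod 2$, so $|S|\bmod 2$ is invariant in time; more elaborate web rules may require a bounded enrichment of this, or a bounded window of $S$ near its leftmost point. The reason one expects such a factoring is the mechanism behind Theorem~\ref{showcase}: the ether is nucleated by spontaneous birth along the boundaries of the large holes of $\Lambda$ — each of which is a rescaled copy of a standard portion of $\Lambda$ — the only freedom being a translational phase of the boundary $1$-pattern relative to the ether's period, and the compliance conditions (certain $1$-patterns block the flow of information about the $0$/$2$ distinction) both make this phase well defined and force it to be the same in every hole, the common value being read off from $v(S)$.

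\textbf{Part (ii): conclusion and main obstacle.} Granting the factoring, set $v_0:=v(S_0)$; then $\cE_*(v_0)=\cE(S_0)=\eta$, and on the event $\{v(S)=v_0,\ R(S)<\infty\}$ we get $\eta_L=\cE(S)=\cE_*(v_0)=\eta$. Since $v$ has finitely many values and the uniform measure on subsets of $[0,L]$ does not concentrate — $\P\bigl(|S|\bmod 2=v_0\bigr)=\tfrac12$ exactly, and more generally any value determined by a bounded window of $S$ has probability at least $2^{-O(1)}$ because $\min S$ has a geometric tail — there is a constant $c>0$, independent of $L$, with $\P\bigl(v(S)=v_0\bigr)\ge c$. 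Combining with Theorem~\ref{showcase},
\[
\P\bigl(R_L<\infty,\ \eta_L=\eta\bigr)\ \ge\ \P\bigl(v(S)=v_0\bigr)-\P(R_L=\infty)\ \ge\ c-o(1),
\]
so $\liminf_{L\to\infty}\P\bigl(R_L<\infty,\ \eta_L=\eta\bigr)\ge c>0$, as required. (If $\eta_L=\eta$ is intended as an exact rather than up-to-equivalence equality, one enlarges $v$ so that it also records the position of $\Lambda$ modulo the ether period, which is again bounded data.) The substantive step, which I expect to be the main obstacle, is the factoring claim: one must extract from the proof of Theorem~\ref{showcase} — and in particular from the compliance conditions — that the replicator's ether is insensitive to all but a bounded amount of the seed, equivalently that $\cE(S\cup S')=\cE(S)$ whenever $S'$ is sufficiently far from the feature of $S$ that selects the ether and $R(S\cup S')<\infty$. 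The remaining ingredients — the reduction to $S$, the finiteness of the set of ethers, and the non-concentration estimate — are routine.
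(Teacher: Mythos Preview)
Your Part (i) is correct and essentially matches the paper's argument (no $2$s anywhere forces the ether to be zero).

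Part (ii) has a genuine gap. You correctly intuit that the ether should factor through some finitely-valued function $v(S)$ of the seed, but you neither identify $v$ nor prove the factoring, and your concrete candidates are wrong. The parity $|S|\bmod 2$ cannot work: for \pig\ alone the paper exhibits more than a hundred inequivalent ethers, far more than two. A ``bounded window near the leftmost point'' is also wrong: the relevant invariant depends \emph{linearly} on the entire seed, not locally.

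The paper's invariant is the \emph{level-$2^m$ link}: by Proposition~\ref{above-voids} and additivity, above every sufficiently large void of $\lambdasingle$ the configuration of $\lambda$ on a strip of height $2^m$ is a segment of $A^\infty$ for a single string $A$ of length $3\cdot 2^m$, the same for all voids, and $A$ is a $\bZ_2$-linear function of $\lambda_0$. When $A$ is a \emph{blocker} (no path crosses the periodic strip it generates), compliance forces the top row of every perturbed void to be the same segment of a periodic $\{0,2\}$-string, which then evolves to the same ether in every hole; this is the factoring you want, with $v(S)=A$. For the probability estimate one does \emph{not} use non-concentration directly. Instead one uses Lemma~\ref{predecessors-and-links}: a seed has $2^m$ predecessors iff its level-$2^m$ link is $0$. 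Hence if the random seed agrees with $S_0$ on $[0,s-1]$ and the remainder on $[s,L]$ has $2^m$ predecessors, then by linearity the combined seed has the \emph{same} link as $S_0$ and therefore the same ether. These two events are independent with probabilities $2^{-s}$ and $4^{-2^m}$ (Lemma~\ref{predecessors-of-seeds}), giving the explicit lower bound $2^{-s-2^{m+1}}$. Your inequality $\P(v(S)=v_0)-\P(R_L=\infty)$ would also lose content here, since one needs $R_L<\infty$ and $\eta_L=\eta$ jointly; in the paper's argument the event ``link equals $S_0$'s blocker link'' already forces both.
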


Given any particular seed, there is a simple procedure to compute the random
variable $R_L$ appearing in Theorems \ref{showcase} and \ref{ether-births},
and in particular to determine whether it is finite. (See Sections \ref{ca}
and \ref{ethers} for details.) For many CA of interest, including \pig, there
are multiple nonequivalent ethers $\eta$ for which the condition of
Theorem~\ref{ether-births}(ii) indeed holds, and which hence have
asymptotically nontrivial probabilities. The first two pictures in
Figure~\ref{piggy} show two examples. Our methods allow the
computation of
explicit rigorous lower bounds on asymptotic probabilities of particular
ethers. For example, in \pig, for the ether that results from the periodic
initial state $(00022222)^\infty$, the $\liminf$ in the theorem is at least
$0.1297$, while $(0)^\infty$, $(2)^\infty$ and $(00002022)^\infty$
have lower
bounds $0.5$, $0.0398$ and $0.0151$, respectively. (In fact, more than
100 ethers
have positive $\liminf$, and we believe that there are infinitely many.)

As remarked earlier, many web CA provably exhibit more complex
behavior for certain exceptional seeds. One important class of behavior is
formalized by the following concept introduced in \cite{GGP}. We call
a seed
$\xi_0$ (or a configuration $\xi$) a \df{quasireplicator} with ether
$\eta$
if the following holds. For some \df{exceptional set} of space--time points
$Q\supseteq\Lambda$, each bounded component of $\bZ^2\setminus Q$ is filled
with $\eta$, while for some $a>1$, the set $a^{-n} Q$ converges as
$n\to\infty$ in Hausdorff metric to a set of Hausdorff dimension strictly
less than~$2$.

%
\begin{theorem}[(Quasireplicators)]\label{quasireplicators}
For some diagonal-compliant and wide-compliant web CA rules, including
\webxor\ and \pig, there exist seeds that are quasireplicators but not
replicators.
\end{theorem}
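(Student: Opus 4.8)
The plan is to exhibit explicit seeds for \webxor\ and \pig\ — those shown as quasireplicators in Figures~\ref{webxor} and \ref{piggy}, such as the seed $11111012$ for \webxor\ and $11111$ for \pig\ — and to verify by hand that the resulting configurations are quasireplicators but not replicators. The whole argument is finitary in spirit: it reduces to a bounded case analysis propagated recursively through the self-similar structure of the additive web. A preliminary sanity check is to run the deterministic procedure that computes $R_L$ (Sections~\ref{ca} and \ref{ethers}) on these seeds and confirm that it returns $R_L=\infty$, so that they are not among the seeds certified by Theorem~\ref{showcase}.

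Next I would recall the scale structure of the web. For any finite binary seed the $1$s evolve under the \add\ CA, and by the classical replication property of additive rules (the identity $(1+X+X^2)^{2^n}=1+X^{2^n}+X^{2^{n+1}}$ modulo $2$), the space–time set of $1$s has a recursive structure: up to finite-range interactions, its portion below time $2^{n+1}$ is three translated copies of its portion below time $2^n$, so that $2^{-n}$ times the set of $1$s converges to a self-similar set of Hausdorff dimension strictly below $2$, and the large complementary ``holes'' have diameter of order $2^n$. The compliance hypotheses enter precisely here: diagonal- and wide-compliance guarantee that along the edges of these holes the information-blocking $1$-patterns occur, so that each bounded complementary component $C$ of the web is ``sealed'' — the restriction $\xi|_C$ is a deterministic function of a bounded amount of data along the upper boundary of $C$. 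This is the same regional decomposition on which the proof of Theorem~\ref{showcase} rests; the difference is that here the seed is fixed and the regions are tracked deterministically rather than probabilistically.

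Then I would set up the recursion on scales. The sealed regions form a self-similar hierarchy: a region born at scale $2^n$ receives, through its upper boundary, data manufactured by regions at smaller scales, and ultimately by the region adjacent to the seed. Analyzing the induced map on the finite set of boundary-data types for the chosen seed, I would (i) identify the ether $\eta$ that fills a generic large region; (ii) exhibit a specific finite non-ether pattern $P_0$ (visible in the figures) occurring in the core region near the seed; and (iii) show that this induced map has a nontrivial invariant (or eventually periodic) class of boundary configurations, so that infinitely many regions — at least one per scale $2^n$, at distance of order $2^n$ from the web — again fail to be filled with $\eta$ and contain a translate of one of finitely many non-ether patterns. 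Setting $Q$ equal to the union of the web with these non-ether patches, every bounded component of $\bZ^2\setminus Q$ is then a sealed, pure-$\eta$ region, so $\xi$ is a quasireplicator with ether $\eta$ and $a=2$: the patches are confined to the holes of the web and are no more numerous, per scale, than those holes, so $2^{-n}Q$ still converges to a set of dimension $<2$. On the other hand $\xi$ is \emph{not} a replicator: the non-ether patches occur at distances tending to infinity from $\Lambda$, so no neighborhood $\Lambda(r)$ of fixed radius can exclude them, and since a generic large region is filled with $\eta$, no single ether can make $\xi$ a replicator of any thickness.

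The main obstacle is step (iii): proving that the non-ether behavior genuinely \emph{persists} to all scales rather than ``healing'' into pure ether at large times. This is a fixed-point statement for the induced map on the (finite) space of boundary-data types, and establishing it requires pinning down, separately for \webxor\ and \pig, an invariant class of boundary configurations, then checking both that the seed enters that class and that the compliance-driven sealing geometry never lets the evolving data leave it. Once such an invariant class is identified, the remaining points — the count of bad patches per scale, hence the dimension bound, and the lower bound on their distance to $\Lambda$ — follow routinely from the same self-similar bookkeeping.
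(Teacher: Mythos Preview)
Your proposal is correct in spirit and aligns with the paper's approach: the paper does not give a detailed proof of this theorem but explicitly defers to the inductive schemes of \cite{GGP} (``we will not give proofs of quasireplication; instead we refer the reader to \cite{GGP} for two typical examples of inductive schemes that feature in such arguments''), and those schemes are precisely the scale-by-scale recursive tracking of boundary data that you outline. Your identification of step~(iii) --- persistence of a non-ether invariant class under the induced map --- as the crux matches the paper's remark that such schemes ``can be very laborious to construct.''
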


Examples of (provable) quasireplicators include the second and third
seeds in
Figure~\ref{webxor}, and the third seed in Figure~\ref{piggy}.
Certain other
seeds appear to be neither replicators nor quasireplicators, but exhibit
apparently chaotic behavior, although proving this seems very challenging.
The fourth examples in each figure are in this category. In some very
special cases, we can prove chaotic behavior in a certain conditional sense,
even for an infinite family of seeds whose number grows exponentially with
their length. We discuss these issues further in the next section.

Theorem~\ref{showcase} describes the space--time configuration away from
$\Lambda$, and moreover states that this description is insensitive to $2$s
in the initial configuration. However, the result provides no information
about the configuration close to $\Lambda$. The next result addresses this.
The \df{forward cone} of a space--time point $(x,t)$ is the set
$\{(y,s)\dvtx|y-x|\leq s-t\}$, and the forward cone of a set is the union
of the
forward cones of its points.

%
\begin{theorem}[(Stability)]\label{stability}
Consider a diagonal-compliant or wide-\break compliant web CA, started from a
uniformly random binary seed on $[0,L]$. With probability converging to
1 as
$L\to\infty$, the configuration of $\xi$ in the forward cone of
$[0,L]\times\{\lfloor C\log L\rfloor\}$ is unchanged if any set of 0s in
$\xi_0$ are changed to 2s. Here, $C$ is an absolute constant. If the
CA has
no spontaneous birth, then with probability converging to $1$ the same cone
contains no~$2$s.
\end{theorem}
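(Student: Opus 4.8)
The plan is to control the \emph{difference set} $D$ of space-time points at which the configuration $\xi$ from the original binary seed and the configuration $\xi'$ obtained by replacing some $0$s with $2$s disagree, and to show that, with probability tending to $1$, $D$ reaches no higher than height $T:=\lfloor C\log L\rfloor$ above $[0,L]$. Replacing $0$s by $2$s changes no $1$, so the additive web $\lambda$ is common to $\xi$ and $\xi'$; hence, by the diagonal- or wide-compliance condition (whose precise form is given in the next section), at every space-time point whose surrounding pattern of $1$s is one of the designated \emph{blocking patterns} the value of the configuration is determined by $\lambda$ alone, and such a point cannot lie in $D$. Unwinding the CA rule then yields the \emph{confinement property}: every point of $D$ is joined to a replaced cell at time $0$ by a path whose spatial coordinate changes by at most $2$ per step, which lies entirely in $D$ and meets no blocking pattern. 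For \webxor\ this description specializes pleasantly: a binary seed yields a configuration with no $2$s at all, $D$ is precisely the set of $2$s of $\xi'$, and their indicator obeys a Rule~$90$ dynamics \emph{killed} wherever $\lambda=1$; so the ``no $2$s'' clause of the theorem will be a direct consequence of the main claim in the no-spontaneous-birth case.

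First I would remove replacements made far from $[0,L]$. With probability tending to $1$ the random binary seed contains a $1$ within distance $\tfrac12 C\log L$ of each endpoint of $[0,L]$, so both light-cone edges of $\lambda$ are occupied by $1$s at all times and the forward cone of $[0,L]\times\{T\}$ is contained in the region spanned by $\lambda$ for every time $\ge T$. A $D$-path can never pass through a $1$-occupied site (the configuration there is $1$ in both $\xi$ and $\xi'$), and such a path travels no faster than a light-cone edge; hence $D$-paths rooted outside the span of $\lambda$ remain outside it, while those rooted inside never leave it. Consequently the replacements outside $[0,L]$ (together with the $O(\log L)$ sites near the endpoints, which are handled separately) are irrelevant inside the forward cone, and we may assume all replaced cells lie in $[0,L]$.

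The core estimate is that $D$ dies out by height $T$. One shows that, in the random web $\lambda$, a fixed cell of $[0,L]\times\{0\}$ is the root of a $D$-path of length $\ell$ with probability at most $Ce^{-c\ell}$, uniformly in the choice of which $0$s are replaced; a union bound over the at most $L+1$ candidate roots then gives $\P(D\text{ reaches height }T)\le (L+1)Ce^{-cT}\to 0$ once $C>1/c$, which is precisely why the logarithmic height is the right scale. The exponential estimate is obtained by dominating the growth of $D$ along $D$-paths by a subcritical oriented site percolation: a $D$-path may be continued to a given forward site only if the local pattern of $1$s there is not a blocking pattern, and by additivity of the web the conditional probability of this event, given the past of the exploration, stays strictly below the relevant percolation threshold. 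Crucially this uses the \emph{full} blocking structure furnished by the compliance condition rather than the naive ``$\lambda=0$'' comparison, which is supercritical: it is the cancellation/additive structure that makes the killed dynamics subcritical, and verifying it requires bookkeeping of which seed coordinates the expanding dependency windows of a $D$-path have already consumed, so that a freshly randomized $1$-value -- and hence a definite chance of a blocking pattern -- is available at each step. On the complementary event $\xi$ and $\xi'$ agree at time $T$ throughout the span of $\lambda$, in particular on the time-$T$ slice $[0,L]$ of the forward cone; and because, once the web is accounted for, $2$-information propagates no faster than the slope-$1$ boundary of the forward cone (this is immediate for range-$1$ rules and follows from the blocking structure along the cone's edges in general), the agreement propagates by induction to every later time. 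Finally, if the CA has no spontaneous birth, then $\xi$ is $2$-free at all times, so the agreement forces $\xi'$ to contain no $2$s in the forward cone, giving the last clause.

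The hardest point is the percolation comparison in the general compliant case: one must show that the blocking patterns supplied by diagonal- or wide-compliance occur densely enough -- in the appropriate conditional sense, given the past of a $D$-path -- to push the exploration below the oriented-percolation threshold, which for a range-$2$ rule (five forward neighbours) is already fairly small. I expect to handle this by admitting an entire family of blocking patterns and, if a single-site comparison is too weak, coarse-graining to $O(1)$-sized space-time blocks: by additivity of the web a ``good'' block -- one that either terminates any $D$-path entering it or launches a persistent wall of blocking patterns -- has conditional probability close to $1$, so the coarse-grained exploration is comfortably subcritical. A secondary technicality is ensuring that the conditional law of the web pattern at the sites an exploration exposes really is ``spread out'' enough for this, which reduces to an elementary statement about the extreme coefficients of the polynomials $(z^{-1}+1+z)^t$ governing the additive web, together with care near the ends of $[0,L]$ where the seed is only partly random.
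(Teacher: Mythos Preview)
Your overall architecture is the paper's: the compliance lemma confines the difference set to empty diagonal (resp.\ wide) paths in $\lambda$, and the theorem reduces to showing that with high probability no such path from $\bZ\times\{0\}$ reaches the forward cone of $[0,L]\times\{T\}$. In the paper this reduction is Lemma~\ref{compliance}, the percolation input is Theorem~\ref{no-percolation}, the transfer to seeds is Proposition~\ref{no-outside-influence}, and Theorem~\ref{stability} is then one line. Two substantive pieces of your sketch are not filled in, and as written they would not go through.

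The first and main gap is the subcriticality estimate itself. For a diagonal-compliant rule the relevant paths are empty \emph{diagonal} paths (two forward choices, not five --- compliance already cuts the branching to range $1$), and each target has $\lambda=0$ with conditional probability $1/2$ via a dual assignment; the first-moment count is therefore \emph{exactly} $1$, i.e.\ critical, and no generic coarse-graining of that comparison beats $1$. The paper breaks criticality by restricting to the \emph{leftmost} empty diagonal path and exploiting its corner points (the forced $1$ to the left of each right--left turn), which contribute an extra independent bit through a carefully constructed dual assignment and yield the bound $(7/8)^{t/2}$. For wide paths the naive bound is supercritical (three forward choices), and the paper instead analyzes a process of maximal $0$-intervals that shrink by~$2$ or refresh, terminating at the first even length; again a dual assignment makes the witness states i.i.d.\ fair coins. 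Your ``full blocking structure'' and ``coarse-graining'' remarks do not supply a mechanism that pushes the bound below $1$; you need one of these two devices or a genuine substitute.

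The second gap is the passage from a uniformly random line to a finite seed. Your union bound over the $L{+}1$ roots handles the top of the cone, but the two infinite diagonal arms must also be blocked, and only finitely many random bits are available. Your proposed fix via $1$s near the endpoints does not give what you claim (for instance the extreme diagonal $\{(-t,t)\}$ carries the constant state $\lambda_0(0)$, which is $0$ with probability $1/2$). The paper's route is different: the width-$k$ edge strip of $\lambda$ is \emph{periodic} in $t$ with period at most $2k$ (Lemma~\ref{diagonal-periodicity}), so the union bound over the arms collapses to $O(k)$ distinct cases, each of which is uniformly random on the relevant interval by Lemma~\ref{diagonal-randomness}. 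This periodicity, not a light-cone barrier, is what makes the seed case tractable.
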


We next discuss some ideas behind our proofs. Since in a web CA the web of
1s evolves according to \add, it easily follows that all 1s lie in
$\Lambda(L)$. In the situation of Theorem~\ref{showcase}, we will
prove that
immediately above each bounded component of $\bZ^2\setminus\Lambda
(L)$ there
is a strip which blocks information flow. Furthermore, each such strip
contains a spatially periodic configuration of 1s, with the repeating unit
being identical for all strips up to translation. This is a probabilistic
statement, not a deterministic one, and the height of the strip is random.
It will be proved using techniques of percolation theory. In contrast with
classical percolation, the space--time configuration $\lambda$ of \add\ is not
i.i.d., but has long-range dependence. We will make use of the key
percolation result below, which we believe is interesting in its own right.

A \df{path} is a finite or infinite sequence $\pi$ of space--time points
$(x_0,t_0), (x_1, t_1),\break \ldots,  (x_n, t_n) (\ldots)$ with $t_{i+1}=t_i+1$
and $|x_{i+1}-x_i|\leq1$ for all $i$. A path is \df{diagonal} if it
satisfies $|x_{i+1}-x_i|= 1$ for all $i$. Suppose $\lambda_0$ is
given, and
let $\lambda$ be the resulting configuration of \add. We say that a path
$\pi$ is \df{empty} if $\lambda(x,t)=0$ for every $(x,t)$ on $\pi$.
A path
is \df{wide} if it is empty and it makes no diagonal step between two $1$s,
that is, it has no two consecutive points $(x,t),(y,t+1)$ with
$|x-y|=1$ but
$\lambda(y,t)=\lambda(x,t+1)=1$. (As suggested by the terminology,
diagonal-compliance and wide-compliance of web CA refer to information flow
being restricted to paths of the appropriate type.) We now assume that the
initial configuration $\lambda_0$ is uniformly random on $\bZ$, that is,
$\lambda_0(x)$ takes values $0,1$ with equal probabilities
independently for
all $x\in\bZ$.

%
\begin{theorem}[(Subcriticality)]\label{no-percolation}
Consider the \add\ CA from a uniformly random initial configuration on
$\bZ$.
We have
%
%
\begin{equation}
\label{exponential-bound} \P \bigl(\exists\mbox{ an empty diagonal path from $\bZ\times\{0
\} $ to $(0,t)$} \bigr)<e^{-ct},\qquad t>0,\hspace*{-25pt}
\end{equation}
for some absolute constant $c>0$. The same conclusion holds for the
existence of a wide path.
\end{theorem}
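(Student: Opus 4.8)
\emph{Plan.} The idea is to deduce the bound from exponential decay of the expected size of a one–sided empty cluster, using the $\mathbb F_2$–linearity of \add\ to supply the ``independence'' that ordinary oriented percolation would provide for free. For $x\in\bZ$ and $s\ge 0$, let $\cE^{(x)}_s=\{y:\text{there is an empty diagonal path from }(x,0)\text{ to }(y,s)\}$ and put $a_s=\mathbb E\,|\cE^{(0)}_s|$. Diagonal steps preserve the parity of $x+t$, so these clusters evolve like an oriented site percolation on the random environment $\{(x,t):\lambda(x,t)=0\}$, and by translation invariance $\P(\exists\text{ empty diagonal path from }\bZ\times\{0\}\text{ to }(0,t))\le\sum_y\P((y,0)\to(0,t))=\sum_z\P((0,0)\to(z,t))=a_t$. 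So it suffices to prove $a_t\le e^{-ct}$.

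The heart of the matter is a decoupling estimate, in the spirit of the classical restart argument for subcritical oriented percolation. Write $\lambda_s(x)=\sum_c K_s(c)\lambda_0(x+c)\bmod 2$ with $(z+1+z^{-1})^s=\sum_c K_s(c)z^c$; the crucial structural fact is that the extreme coefficients satisfy $K_s(\pm s)=1$ (and $K_s(c)=0$ for $|c|>s$). Fix $y,t,s$, and let $M(y)$ be the number of points at level $t+s$ reachable by an empty diagonal path from $(y,t)$, with $(y,t)$ treated as open. The ``past event'' $\{(0,0)\to(y,t)\}$ is a union, over diagonal paths $\pi$ from $(0,0)$ to $(y,t)$, of the linear conditions $\{\lambda\equiv 0\text{ on }\pi\}$; since every such $\pi$ lies in the backward light cone of $(y,t)$, this event is measurable with respect to $\lambda_0$ restricted to $[y-t,y+t]$. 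On the other hand $M(y)$ is a function of $\lambda_t$ restricted to $[y-2s,y+2s]$, and for $j\ge 1$ the value $\lambda_t(y+j)$ depends on $\lambda_0(y+t+j)$ — a coordinate outside $[y-t,y+t]$ — with coefficient $K_t(t)=1$; the matrix expressing $(\lambda_t(y+1),\dots,\lambda_t(y+2s))$ in terms of these private coordinates $(\lambda_0(y+t+1),\dots,\lambda_0(y+t+2s))$ is lower triangular with unit diagonal, hence invertible over $\mathbb F_2$, and symmetrically on the left using $\lambda_0(y-t-1),\dots,\lambda_0(y-t-2s)$. Therefore, conditionally on $\lambda_0|_{[y-t,y+t]}$ — in particular conditionally on the past event, which moreover forces $\lambda_t(y)=0$ — the configuration $\lambda_t$ on $[y-2s,y+2s]\setminus\{y\}$ is uniform and independent of the past. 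Hence $\mathbb E[M(y)\mid(0,0)\to(y,t)]=\tilde a_s$, where $\tilde a_s:=\mathbb E[M(0)\mid\lambda_0(0)=0]=\mathbb E|\cE^{(0)}_s|\,/\,\P(\lambda_0(0)=0)=2a_s$.

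This yields submultiplicativity: bounding $|\cE^{(0)}_{t+s}|\le\sum_{y\in\cE^{(0)}_t}M(y)$ and taking expectations,
\[
a_{t+s}\le\sum_y\P(y\in\cE^{(0)}_t)\,\mathbb E[M(y)\mid y\in\cE^{(0)}_t]=2a_s\sum_y\P(y\in\cE^{(0)}_t)=2\,a_s\,a_t,
\]
so $b_s:=2a_s$ is submultiplicative. From $a_0=a_1=\tfrac12$, the case $s=1$ gives the monotonicity $a_{t+1}\le a_t$, and a finite computation gives $a_2=\tfrac7{16}$, i.e. $b_2=\tfrac78<1$; therefore $b_{2k}\le b_2^{\,k}$ and $a_t\le\tfrac12(7/8)^{\lfloor t/2\rfloor}\le e^{-ct}$, which is the asserted bound. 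For wide paths the argument is identical: wide paths live in the same forward cones and wideness is a local constraint on $\lambda$, so the private–coordinate decoupling and the submultiplicativity pass through unchanged; one only replaces the computation $b_2<1$ by a finite check that the analogous quantity is $<1$ at some base level $m_0$ (the extra patterns forbidden by wideness only shrink the base–level cluster, which is what is needed).

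\emph{Main obstacle.} The delicate step is the decoupling identity $\mathbb E[M(y)\mid(0,0)\to(y,t)]=\tilde a_s$. Because the whole space–time configuration is a deterministic function of $\lambda_0$, ``past'' and ``future'' are genuinely entangled — a naive union bound over empty paths gives only $a_t\le\tfrac12$, which does not decay — and the only thing rescuing the argument is the precise linear–algebraic structure of \add: the non-vanishing of the extreme kernel coefficients and the resulting triangular dependence on ``private'' outer coordinates. Getting this completely right (choosing the windows correctly, handling the edge of the light cone, verifying invertibility of the relevant $\mathbb F_2$–matrices, and adapting everything to the wide–path variant) is where the real work lies; the percolation part is then the short submultiplicativity computation above.
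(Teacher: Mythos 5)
Your argument for empty diagonal paths is correct, and it takes a genuinely different route from the paper's. The paper bounds the probability that a \emph{fixed} path is the \emph{leftmost} empty diagonal path, using a dual assignment for the path points together with the forced $1$s at its ``corner points'', and then sums over paths; you instead control the expected cluster size $a_t=\mathbb{E}|\cE^{(0)}_t|$ via a restart inequality $a_{t+s}\le 2a_ta_s$, whose decoupling step rests on exactly the triangular, unit-diagonal $\mathbb F_2$ structure (i.e.\ $\lambdasingle_t(\pm t)=1$) that underlies the paper's notion of dual assignment. Your verification that the past event $\{(0,0)\to(y,t)\}$ is measurable with respect to $\lambda_0|_{[y-t,y+t]}$ while $(\lambda_t(y+j))_{1\le|j|\le 2s}$ is conditionally uniform given that $\sigma$-field is sound, the computation $a_2=7/16$ is correct (amusingly both proofs yield the factor $7/8$ per two time steps), and the reduction to $a_t$ together with the treatment of odd $t$ via $a_1=1/2$ is fine. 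Your approach is arguably more reusable as a template; the paper's leftmost-path argument is more hands-on but avoids the conditional-law bookkeeping.

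The wide-path case, however, does \emph{not} ``pass through unchanged'', and this is a genuine gap. The wideness constraint on a step $(x,u)\to(x+1,u+1)$ of a path ending at $(y,t)$ involves $\lambda(x,u+1)$, and when the path sits on the left edge of the backward light cone of $(y,t)$ (i.e.\ $x=y-(t-u)$, which forces a rightward step), that state depends on $\lambda_0(y-t-1)$ with coefficient $\lambdasingle_{u+1}(u+1)=1$; symmetrically on the right. Hence the past event ``$(0,0)\to(y,t)$ by a wide path'' is measurable only with respect to $\lambda_0|_{[y-t-1,\,y+t+1]}$, and the coordinates $\lambda_0(y\pm(t+1))$ --- precisely the private coordinates your triangular matrix needs to make $\lambda_t(y\pm1)$ conditionally uniform --- are consumed by the past. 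Since $M(y)$ for wide paths genuinely depends on $\lambda_t(y\pm1)$ (already through the wideness constraint of its first step), the identity $\mathbb{E}[M(y)\mid(0,0)\to(y,t)]=2a_s$ fails as stated. It can presumably be repaired (e.g.\ by dropping the constraints involving $\lambda_t(y\pm1)$ from $M$, or by conditioning on those two bits and taking a worst case), but every such relaxation inflates the base-level constant that must then be checked to be $<1$. Relatedly, your justification for that check --- that wideness ``only shrinks'' the cluster --- compares wide paths to \emph{empty} paths, which by Theorem~\ref{percolation} are supercritical, so the comparison gives nothing; wide paths are not a subclass of diagonal paths (they may step straight down), and in fact $a_1=5/8$ for wide paths, so $2a_1=5/4>1$ and a genuine finite computation at some level $m_0\ge 2$ remains to be carried out. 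The paper's own wide-path proof is structurally different (a shrinking/refreshing interval process whose lifetime is dominated by the waiting time for the pattern $11011$), which reflects the fact that the wide case is not a routine modification of the diagonal one.
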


In contrast, we prove that empty paths do percolate.

%
\begin{theorem}[(Supercriticality)]\label{percolation}
For the \add\ CA from a uniformly random initial configuration on $\bZ$,
\[
\P \bigl(\exists\mbox{ an infinite empty path from }(0,0) \bigr)>0.
\]
\end{theorem}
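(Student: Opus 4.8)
The plan is to combine a second-moment estimate with a compactness reduction. By König's lemma the collection of finite empty paths from $(0,0)$ forms a locally finite rooted tree (each vertex has at most three children), so an infinite empty path from $(0,0)$ exists if and only if this tree is infinite, i.e.\ if and only if there is an empty path of length $n$ from $(0,0)$ for every $n$. Hence, writing $Z_n$ for the number of empty paths of length $n$ from $(0,0)$, the events $\{Z_n>0\}$ decrease and
\[
\P\bigl(\exists\text{ infinite empty path from }(0,0)\bigr)=\lim_{n\to\infty}\P(Z_n>0),
\]
so it suffices to show $\inf_n\P(Z_n>0)>0$. By the Paley--Zygmund inequality this follows once we prove $\mathbb E[Z_n^2]\le C\,(\mathbb E[Z_n])^2$ for a constant $C$ not depending on $n$.

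The first moment is exact. For a path $P=\bigl((0,0),(x_1,1),\dots,(x_n,n)\bigr)$, the event that $P$ is empty is the event that the $n+1$ linear functionals $\lambda(x_j,j)$ of $\lambda_0$ all vanish; over $\mathbb F_2$ the functional giving $\lambda(x,t)$ in terms of $(\lambda_0(y))_y$ is encoded by the Laurent polynomial $\phi_{x,t}(u)=u^{x-t}(1+u+u^2)^t$, the coefficient of $u^y$ being that of $\lambda_0(y)$. Since every step of a path is either a non-left or a non-right step, $\phi_{x_j,j}$ has a top or bottom term $u^{x_j\pm j}$ lying outside the supports of all $\phi_{x_i,i}$ with $i<j$; hence these $n+1$ functionals are always independent and $\P(P\text{ empty})=2^{-(n+1)}$ for \emph{every} path $P$. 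Therefore $\mathbb E[Z_n]=3^n2^{-(n+1)}=\tfrac12(3/2)^n$.

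For the second moment, $\mathbb E[Z_n^2]=\sum_{P,P'}2^{-r(P,P')}$ with $r(P,P')$ the rank of $\{\phi_z:z\in P\cup P'\}$ over $\mathbb F_2$. Writing $2(n+1)-r(P,P')=|P\cap P'|+\mathrm{def}(P,P')$, $\mathrm{def}\ge0$, the target is $\sum_{P,P'}2^{\,|P\cap P'|+\mathrm{def}(P,P')}=O(9^n)$. If $\mathrm{def}\equiv0$ this is a routine geometric-series computation: $2^{|P\cap P'|}$ factorises over time, a pair first separating at time $k$ has weight $\asymp2^k$ and there are $\asymp3^{2n-k}$ such pairs, giving $O(9^n)\asymp(\mathbb E[Z_n])^2$. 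The substance is to control the defect. I would study the relations $\sum_zc_z\phi_z=0$ via the factorisation $\phi_{x,t}=u^{x-t}p(u)^t$ with $p(u)=1+u+u^2$ irreducible over $\mathbb F_2$: such a relation is equivalent to $\sum_tp(u)^tq_t(u)=0$, where $q_t$ carries the coefficients at time $t$ and has at most two terms since $P\cup P'$ has at most two cells per time. Reducing successively modulo $p$ forces $q_0=q_1=0$ and shows that $q_t$ can be nonzero only where the two cells at time $t$ are at separation divisible by $3$, and that the relation then dictates the behaviour of $P$ and $P'$ throughout its span of $m\ge2$ time steps; this constrains both itineraries at those steps, costing the pair a factor of order $3^{-m}$ in the count while contributing only $1$ to the defect, so summing over defects keeps $\sum_{P,P'}2^{|P\cap P'|+\mathrm{def}}=O(9^n)$.

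The hard part will be exactly this control of $\mathrm{def}(P,P')$: the field $\lambda$ has long-range dependence, so two empty paths are genuinely correlated even when vertex-disjoint, and one must show the rank defects are sparse enough not to defeat the $\asymp 9^n$ count. If a direct bound on $\mathrm{def}$ proves too delicate, a fallback is to run the argument with $Z_n$ replaced by a count over a thinned class of paths --- for example those that, once separated, never again come within distance $3$ --- chosen so that pairs of counted paths automatically have full rank; one then only needs the first moment of the thinned count to still grow geometrically, so the diagonal term is negligible and Paley--Zygmund again applies.
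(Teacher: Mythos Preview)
Your first-moment computation is correct (and indeed matches the paper's dual-assignment machinery), but the second-moment approach breaks down for a basic reason you have overlooked: in one dimension, two nearest-neighbour paths from the same point meet far too often.  Even in the idealised case $\mathrm{def}\equiv 0$, the ratio
\[
\frac{\mathbb E[Z_n^2]}{(\mathbb E[Z_n])^2}\;=\;9^{-n}\sum_{P,P'}2^{|P\cap P'|}\;=\;\mathbb E\bigl[2^{L_n}\bigr],
\]
where $L_n$ is the number of visits to $0$ by time $n$ of the difference walk $D_t=x_t-x'_t$ (steps $\pm2,\pm1,0$ with weights $1,2,3,2,1$).  This walk is a mean-zero finite-variance random walk on $\mathbb Z$, hence recurrent, and $L_n/\sqrt n$ has a nondegenerate limit law; consequently $\mathbb E[2^{L_n}]\ge 2^{c\sqrt n}\,\P(L_n\ge c\sqrt n)\to\infty$.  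Your ``routine geometric-series computation'' assumes that once two paths separate they stay apart, which is false in one dimension; the many reunions are what make the second moment blow up.  Since the true rank satisfies $r(P,P')\le |P\cup P'|$, including the defect can only make $\mathbb E[Z_n^2]$ larger, so Paley--Zygmund yields only $\P(Z_n>0)\ge e^{-c\sqrt n}$, which is useless.  Your fallback does not repair this: a restriction ``once separated, never again within distance $3$'' is a condition on \emph{pairs}, not on single paths, so it does not define a thinned $Z_n$; and any natural single-path restriction (e.g.\ monotone, or non-left) still leaves the difference walk recurrent.

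The paper avoids second moments entirely.  It studies a single distinguished path, the \emph{rightward $Z$-path} $(r_t,t)$ from the origin: at each step it moves to the rightmost $0$ in $(-\infty,r_t+1]$.  An exploration argument with a dual assignment (Lemma~\ref{r-drift}) shows that $r_t$ has drift $1/4$ with exponential concentration.  Running a rightward $Z$-path from $(L,0)$ and a leftward one from $(-L,0)$, for $L$ large these never cross with probability at least $1/3$; on that event Lemma~\ref{empty-Z-connection} furnishes an infinite empty path from some site in $[-L-2,L+2]$, and translation-invariance transfers positive probability to the origin.  If you want to make a moment-style argument work, you would need a genuinely different device to defeat recurrence --- for example a multi-scale or sprinkling argument --- rather than a direct $\mathbb E[Z_n^2]$ bound.
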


We now briefly discuss background to our results. As remarked earlier, CA
that exhibit chaotic behavior for typical seeds but regular behavior for
some seeds are apparently very common. Empirical evidence strongly suggests
that the one-dimensional rules \textit{Exactly 1} \cite{GG3}, \textit
{Perturbed
Exactly 1} \cite{GGP} and \textit{EEED} \cite{GG4} are all in this
category. It is natural to postulate a mechanism for this phenomenon,
whereby chaos nucleates from certain local patterns, and, once started,
invades all nonchaotic regions. It is tempting to conclude that this
robustness of chaos might be universal law, akin to the second law of
thermodynamics.

To our knowledge, the first compelling evidence to the contrary was presented
in~\cite{GG2}, where a CA later called \textit{Extended 1 Or 3} was
introduced.
This rule arises naturally as the ``2-layer extremal boundary
dynamics'' of
a classical two-dimensional CA rule, \textit{Box 13}. \pig\ is also the 2-layer
extremal boundary dynamics of a two-dimensional rule. See
Section~\ref{prelim} for more information. Extremal dynamics have been
utilized very effectively in the analysis of Packard snowflake CA in~\cite{GG1,GG2}.

\textit{Extended 1 Or 3} was proved in \cite{GGP} to admit both
replicators and
quasireplicators, and observed to generate apparent chaos from some seeds.
Empirical evidence was presented that long random seeds are replicators with
high probability, and thus that it is the ordered phase that is
resistant to
disorder. In this article, we provide the first rigorous demonstration of
this phenomenon. The classes of CA that we consider are strongly
inspired by
\textit{Extended 1 Or~3}. We have not succeeded in proving that the conclusion
of Theorem~\ref{showcase} holds in the case of \textit{Extended 1 Or~3},
although this would follow if a certain natural conjecture
(Conjecture \ref{4-free}) were established.

We note that the disorder-resistance phenomenon under consideration is
somewhat reminiscent of insensitivity of CA rules to random noise in the
update rule, as in \cite{Gac1,Gra}.

Much CA research has focused on evolution from carefully chosen initial
configurations---a notable rigorous example is \cite{Coo}. In contrast,
rigorous results for CA from a random initial configurations are scarce,
despite their potential importance in understanding self-organization. Most
such research has been focused on \textit{nucleation}, that is, random
formation
of centers that orchestrate a takeover of the available space. Notable
examples include bootstrap percolation \cite{Hol,BBDM} and excitable media
models \cite{FGG}. We also mention two previous works on additive dynamics
started
from a product measure, \cite{Lin} and \cite{EN}; the latter finds an
embedded random walk by an argument somewhat related to the methods in
Section~\ref{subcritical}.

In many cases, percolation with long range dependence is extremely
challenging to analyze rigorously (see \cite{Win,BBS,Gac2,BS}, and
references therein). Nevertheless, in our setting it turns out that the
additivity of \add\ allows certain judiciously chosen percolation arguments
to be carried through with relative ease. Translating results from an
infinite random initial configuration to finite seeds also appears daunting,
since the number of random bits is now \textit{finite}. However, additivity
introduces extensive periodicity and repetition into the configuration. With
care, these properties can be used to advantage. This extreme form of
long-range dependence provides the link between lack of percolation and
evolution from random seeds, and is also the reason for formation of ethers.

While our results provide a reasonably comprehensive picture of \emph
{subcritical} percolation behavior for certain path types (diagonal and wide),
it should be emphasized that the behavior for paths of \emph{supercitical}
type (empty paths) in the evolution from finite random seeds is not well
understood. We discuss open questions and prove some preliminary
results in
this direction in Section~\ref{supercritical}.

The article is organized as follows. In Section~\ref{prelim}, we establish
terminology, including the formal definitions of diagonal-compliance and
wide-compliance, we introduce and discuss some further examples of CA having
these properties, and we discuss how Theorem~\ref{quasireplicators} is
proved. Sections \ref{blob}--\ref{seeds} are concerned entirely with
properties of the additive rule \add, from which properties of web CA are
deduced later. In Section~\ref{blob}, we review properties (most of
them well
known) of \add\ started from a single occupied site, and in
Section~\ref{dual} we use additivity to deduce basic properties of the
evolution from random configurations. In Sections \ref{subcritical} and
\ref{supercritical}, we prove the percolation results,
Theorems \ref{no-percolation} and \ref{percolation}, respectively,
and discuss
other facts and open problems concerning percolation. In Section~\ref{seeds},
we deduce key results about evolution of \add\ from random seeds.
Finally, we
return to web CA. In Section~\ref{ca}, we deduce Theorems \ref
{showcase} and
\ref{stability}, and in Section~\ref{ethers} we prove
Theorem~\ref{ether-births} and show how to compute lower bounds on ether
probabilities.

\section{Definitions, examples and preliminary results} \label{prelim}

\subsection{Basic conventions}

Throughout the paper, $\lambda$ denotes the \textit{1 Or 3} CA, while
$\xi$
denotes a web CA. All our intervals will be subsets of $\bZ$ or of
$\bZ\times\{t\}$ for some $t\ge0$. We adopt the convention that
$[a,b]=\varnothing$ and $[a,b]\times\{t\}=\varnothing$ whenever $b<a$.

Throughout, a \textit{site} or a \textit{cell} will refer to an
element of
$\bZ$; a \textit{point} will be an element of space--time
$\bZ\times[0,\infty)\subset\bZ^2$. The state of a CA $\xi$ at
cell $x$ and
time $t$ is denoted $\xi_t(x)$ or $\xi(x,t)$, depending on whether
our focus
is on time evolution or the space--time configuration. When specifying
a seed,
we always assume that all states left unspecified are 0. In diagrams of
space--time evolution, state $0$ is colored white, state $1$ is black
or grey
and state $2$ is a different nongreyscale color for each CA rule.

We say that a collection of $\{0,1\}$-valued random variables is
\df{uniformly random} if they are independent and take values $0$ and $1$
each with probability $1/2$.

\subsection{Compliance}

In this section, we formally introduce various families of web CA.
As mentioned already, these will have $3$ states and range $2$.
Thus, the state of a site is
$\xi_t(x)\in\{0,1,2\}$ for $x\in\bZ$ and $t\in[0,\infty)$, and the
evolution is given by
\[
\xi_{t+1}(x)=f \bigl(\xi_{t}(x-2),\xi_{t}(x-1),
\xi_{t}(x), \xi_{t}(x+1),\xi_{t}(x+2) \bigr)
\]
for some function $f$.

We reiterate our standing assumption that the $1$s of $\xi$ behave as
the \textit{1 Or 3} CA. More precisely, writing
\[
\delta(a):=\ind[a=1]=a \mmod2,\qquad a=0,1,2,
\]
we assume
that
%
%
\begin{equation}
\label{first-level} \delta\bigl(f(a,b,c,d,e)\bigr)=\delta(b)+\delta(c)+\delta(d)
\mmod2
\end{equation}
for all $a,b,c,d,e$. Thus, if we define
%
%
\begin{equation}
\label{lambda} \lambda_t(x):=\delta\bigl(\xi_t(x)
\bigr),
\end{equation}
then \eqref{first-level} implies that $\lambda$ satisfies
the \textit{1 Or 3} CA rule. We sometimes call $\lambda$ the
\df{first level} of the process. We call a CA rule that
satisfies \eqref{first-level} and $f(0,0,0,0,0)=0$ a~\df{web} rule.

We now consider various further conditions that may be imposed
on $f$. The idea will be that the flow of information
concerning the distinction between states $0$ and $2$ is
blocked by $1$s (in various locations). Throughout the following, we take
$a,b,c,d,e$ and $a',b',c',d',e'$ to be arbitrary satisfying
$\delta(a)=\delta(a')$, $\delta(b)=\delta(b')$, etc.

We say that the rule $f$ is \df{empty-compliant} if
\[
f(a,b,c,d,e)=f\bigl(a',b,c,d,e'\bigr);
\]
that is, a cell's next state $\xi_{t+1}(x)$ depends on
nonadjacent cells $\xi_t(x\pm2)$ only through their first
level. [Recall that by \eqref{first-level}, the \emph{first
level} of the next state cannot depend on the nonadjacent
cells at all.] Similarly, we say that the rule is \df{diagonal-compliant}
if
\[
f(a,b,c,d,e)=f\bigl(a',b,c',d,e'
\bigr).
\]

It will be convenient to express the next conditions in terms
of the \emph{new} first-level states of the neighboring cells.
Thus, we denote
\begin{eqnarray*}
\ell&:=&\delta(a)+\delta(b)+\delta(c)\mmod2;
\\
r&:=&\delta(c)+\delta(d)+\delta(e)\mmod2,
\end{eqnarray*}
so that if $(a,b,c,d,e)=(\xi_t(x-2),\ldots,\xi_t(x+2))$ then
$(\ell,r)=(\lambda_{t+1}(x-1),\lambda_{t+1}(x+1))$. We say that
$f$ is \df{wide-compliant} if it is empty-compliant and
\begin{eqnarray*}
c&=&r=1 \qquad\mbox{implies } f(a,b,c,d,e)=f\bigl(a',b,c,d',e'
\bigr)\qquad\mbox{and}
\\
c&=&\ell=1 \qquad\mbox{implies } f(a,b,c,d,e)=f\bigl(a',b',c,d,e'
\bigr).
\end{eqnarray*}

In a configuration $\lambda$ of \add, a path is said to be \df
{$\theta$-free}
if it is empty and it contains no point $(x,t)$ whose $5$-point neighborhood
$\{(x\pm1,t),(x\pm1,t-1),(x,t-1)\}$ contains $\theta$ or more $1$s. Finally,
we say a CA rule $f$ is \df{$\theta$-free-compliant} if it is empty-compliant
and
%
\begin{eqnarray}
\delta(b)+\delta(c)+\delta(d)+\delta(\ell)+\delta(r)\geq\theta
\nonumber\\
\eqntext{\mbox{implies } f(a,b,c,d,e)=f\bigl(a',b',c',d',e'
\bigr).}
\end{eqnarray}

Recall the definition of \emph{no spontaneous birth} from the
\hyperref[sec1]{Introduction};
this is equivalent to the condition that $f(a,b,c,d,e)\ne2$
whenever $a,b,c,d,e\in\{0,1\}$.

As suggested by the terminology, the behavior of cellular
automata satisfying the above conditions is constrained by
paths of the appropriate types.

%
\begin{lemma}[(Compliance)]\label{compliance}
Consider a web CA that is empty-compliant
(resp.: diagonal-compliant, wide-compliant, or $\theta$-free-compliant).
Consider two initial configurations $\xi_0,\xi'_0$ whose first
levels agree [i.e., $\delta(\xi_0(x))=\delta(\xi'_0(x))$ for
all $x$], and define the first-level dynamics $\lambda$ via
\eqref{lambda}. Fix a point $(y,t)$. If $\lambda$ has no empty
path (resp.: empty diagonal, wide, or $\theta$-free
path) from any $(x,0)$ at which $\xi_0(x)\neq\xi'_0(x)$ to
$(y,t)$, then $\xi_t(y)=\xi'_t(y)$. Moreover, if the CA has
no spontaneous birth, then $\xi_t(y)\neq2$.
\end{lemma}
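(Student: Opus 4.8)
The plan is to prove the four variants (empty-, diagonal-, wide-, and $\theta$-free-compliance) simultaneously by a single induction on $t$, since in each case the hypothesis says exactly: ``information about the $0$/$2$ distinction propagates only along paths of the relevant type.'' Fix the two configurations $\xi_0,\xi_0'$ with equal first levels, so $\lambda$ is well-defined from \eqref{lambda} and is common to both. Let $D_t\subseteq\bZ$ be the set of sites $x$ with $\xi_t(x)\ne\xi_t'(x)$; by \eqref{first-level} both configurations have the same first level at all times, so $x\in D_t$ forces $\lambda_t(x)=\delta(\xi_t(x))=\delta(\xi_t'(x))$, i.e.\ every disagreement site is {\em empty} (it carries a $0$ or a $2$, never a $1$). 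The claim to be proved by induction is: if $(y,t)$ admits no path of the relevant type from $D_0\times\{0\}$ to $(y,t)$, then $y\notin D_t$; and moreover (under no spontaneous birth, handled separately below) $\xi_t(y)\ne 2$.

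For the inductive step, suppose $(y,t+1)$ has no admissible path from $D_0\times\{0\}$. First note $\lambda_{t+1}(y)=0$: otherwise $(y,t+1)$ itself is occupied and we must show it is {\em not} a disagreement site, which is immediate since $\lambda_{t+1}(y)=1$ forces $\delta(\xi_{t+1}(y))=\delta(\xi_{t+1}'(y))=1$, so both equal $1$ and agree --- in fact in the wide/$\theta$-free cases an occupied point cannot lie on an admissible path at all, so this case is vacuous or trivial depending on the path type; I will state this carefully. Now assume $\lambda_{t+1}(y)=0$. The value $\xi_{t+1}(y)=f(\xi_t(y-2),\ldots,\xi_t(y+2))$, and similarly for $\xi'$. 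I want to invoke the relevant compliance identity for $f$ to reduce the dependence of $\xi_{t+1}(y)$ on those cells in $D_t$ that are ``shielded'' by a local $1$-pattern, and then apply the inductive hypothesis to the remaining cells. Concretely: for empty-compliance, cells $y\pm2$ only matter through their first level (which agrees), so $\xi_{t+1}(y)$ depends on $\xi_t$ only via $y-1,y,y+1$ and their first levels; if none of $y-1,y,y+1$ is in $D_t$ we are done, and if say $y+1\in D_t$ then concatenating a path to $(y+1,t)$ with the step to $(y,t+1)$ gives an admissible path to $(y,t+1)$ (a length-$1$ step, trivially empty and --- crucially --- ``diagonal''/``wide''/``$\theta$-free'' because $|y-(y+1)|=1$, $\lambda_{t+1}(y)=0$, and in the wide/$\theta$-free cases the shielding hypothesis on $f$ is exactly what guarantees that the problematic cell is {\em not} needed, so a path through it is never forced). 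This last parenthetical is the crux: in the wide case, if $y+1$ carries a $1$ ($\lambda_t(y+1)=1$) then the step from $(y+1,t)$ to $(y,t+1)$ with $\lambda_{t+1}(y)=0$ would be a non-issue, but if instead $\lambda_{t+1}(y-1)=r=1$ together with $\lambda_t(y)=c=1$... wait, $c=\lambda_t(y)$ cannot be $1$ since then $\lambda_{t+1}(y)=1\ne0$; the genuinely delicate bookkeeping is matching the algebraic hypothesis ``$c=r=1$ implies $f$ ignores $d,e$'' and ``$c=\ell=1$ implies $f$ ignores $a,b$'' to the graph-theoretic notion of a wide path making no diagonal $1$-to-$1$ step, and checking the $\theta$-free case where the number of $1$s in the $5$-point neighborhood of $(y,t+1)$ governs which of $a,b,c,d,e$ can be ignored.

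I expect the main obstacle to be precisely this translation between the algebraic compliance conditions on $f$ and the path-type definitions: one must verify, for each of the four path types, that whenever the compliance identity does {\em not} let us drop a given neighbor $\xi_t(z)$ (with $z\in\{y-2,\ldots,y+2\}$), the one-step extension through $(z,t)$ to $(y,t+1)$ is a legal path of that type, so that the inductive hypothesis applies to $(z,t)$ and yields $z\notin D_t$ after all --- a contradiction-free direct argument. For the wide and $\theta$-free cases this requires a short case analysis on the values $\lambda_t(y-2),\ldots,\lambda_t(y+2)$ and the derived quantities $\ell,r$; I would organize it as a table. The ``no spontaneous birth'' addendum is then easy: $\xi_t(y)\ne2$ fails only if $\xi_{t+1}(y)=2$, but by empty-compliance (common to all four conditions) $\xi_{t+1}(y)$ is a function of $\xi_t(y-1),\xi_t(y),\xi_t(y+1)$ and the first levels $\lambda_t(y\pm2)$; by the inductive hypothesis applied along the (trivially admissible) one-step paths, each of $\xi_t(y-1),\xi_t(y),\xi_t(y+1)$ equals its $\xi_0\equiv0$-based value, which inductively is never $2$, so all three lie in $\{0,1\}$, and no spontaneous birth gives $f(\cdot)\ne2$. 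The base case $t=0$ is the definition of $D_0$ together with the convention that a single point is a path from itself.
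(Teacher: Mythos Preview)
Your approach is essentially the paper's: prove the contrapositive by induction on $t$, with the inductive step reducing to the verification that whenever the compliance condition does \emph{not} permit $f$ to ignore a neighbor $\xi_t(z)$, the one-step extension $(z,t)\to(y,t+1)$ is a legal step of the relevant path type. The paper compresses this to ``a straightforward verification''; you spell it out (with some meandering---note the slip $\lambda_{t+1}(y-1)=r$ should be $\ell$, and your worry that $c=1$ forces $\lambda_{t+1}(y)=1$ is unfounded).

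For the ``no spontaneous birth'' addendum the paper's argument is cleaner than yours and worth adopting: simply apply the already-proved first part with the comparison configuration $\xi_0'$ obtained from $\xi_0$ by replacing every $2$ by $0$; since $\xi_0'$ has no $2$s, no spontaneous birth gives $\xi_t'(y)\in\{0,1\}$, and the first part gives $\xi_t(y)=\xi_t'(y)$. Your version has two glitches. First, ``$\xi_0\equiv 0$-based value'' is wrong as written---the all-zero configuration has a different first level from $\xi_0$, so the lemma does not apply; you need the $2\mapsto 0$ modification instead. Second, your claim that the inductive hypothesis applies to all three of $(y-1,t),(y,t),(y+1,t)$ via ``trivially admissible one-step paths'' is not uniformly true: for diagonal paths the vertical step $(y,t)\to(y,t+1)$ is not diagonal, so you cannot conclude anything about $\xi_t(y)$ that way. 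This is harmless, because diagonal-compliance already lets you ignore $\xi_t(y)$ beyond its first level---but you should say so rather than overclaim.
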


\begin{pf}
Suppose, to the contrary, that $\xi_t(y)\ne\xi_t(y')$. We need
to show that there exists a path of the appropriate type
from $\bZ\times\{0\}$ to $(y,t)$. By induction, it suffices to exhibit
the final step on this path. This is a straightforward verification.

To prove the final claim in the no spontaneous birth case, consider the
initial state $\xi_0'$ in which every 2 of $\xi_0$ is changed to 0. Then
$\xi_t(y)=\xi_t'(y)=0$.
\end{pf}

%
\begin{lemma}[($3$-free paths)]\label{3-free}
In any configuration $\lambda$ of \add, any $3$-free path is wide. Any
$3$-free-compliant web CA rule is wide-compliant.
\end{lemma}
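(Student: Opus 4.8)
The plan is to handle the two assertions of the lemma separately. The first is a single-step local computation using the \add\ update, while the second is a short finite case-check relating the defining inequalities of $3$-free-compliance and wide-compliance. For the first assertion, let $\pi$ be a $3$-free path. Being $3$-free it is in particular empty, so I need only rule out a diagonal step between two $1$s. Suppose toward a contradiction that $\pi$ contains consecutive points $(x,t)$ and $(x+1,t+1)$ with $\lambda(x+1,t)=\lambda(x,t+1)=1$; the left-going step $(x,t),(x-1,t+1)$ is identical after reflecting, using the left--right symmetry of \add. Since $(x,t)$ and $(x+1,t+1)$ lie on the empty path $\pi$ we have $\lambda(x,t)=\lambda(x+1,t+1)=0$, so the \add\ update at $(x+1,t+1)$ reads $0\equiv\lambda(x,t)+\lambda(x+1,t)+\lambda(x+2,t)\equiv 1+\lambda(x+2,t)\pmod{2}$, forcing $\lambda(x+2,t)=1$. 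Now the three distinct points $(x,t+1)$, $(x+1,t)$, $(x+2,t)$ all carry $1$s and all lie in the $5$-point neighborhood $\{(x,t+1),(x+2,t+1),(x,t),(x+2,t),(x+1,t)\}$ of the path point $(x+1,t+1)$, contradicting $3$-freeness. Hence $\pi$ makes no such step and is wide.

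For the second assertion, I would check directly that a $3$-free-compliant rule $f$ meets the definition of wide-compliance. Empty-compliance is already built into $3$-free-compliance, so it remains to verify the two implications ``$c=r=1\Rightarrow f(a,b,c,d,e)=f(a',b,c,d',e')$'' and ``$c=\ell=1\Rightarrow f(a,b,c,d,e)=f(a',b',c,d,e')$'' (where as usual $c=1$ abbreviates $\delta(c)=1$). I would split each implication according to the parity of $\delta(b)+\delta(c)+\delta(d)$. If this is odd, then by \eqref{first-level} the output $f(a,b,c,d,e)$ equals $1$, the unique state with $\delta=1$; and since replacing arguments by primed ones of matching first level leaves $\delta(b)+\delta(c)+\delta(d)\bmod 2$ unchanged, the corresponding right-hand side also equals $1$, so there is nothing to prove. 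If instead the parity is even, then because $\delta(c)=1$ exactly one of $\delta(b),\delta(d)$ equals $1$; together with $\delta(c)=1$ and with the hypothesis $r=1$ (respectively $\ell=1$) this gives $\delta(b)+\delta(c)+\delta(d)+\delta(\ell)+\delta(r)\ge 3$, so the $3$-free-compliance hypothesis applies and yields $f(a,b,c,d,e)=f(a',b',c',d',e')$ for every primed tuple of matching first level, which specializes to the desired equality.

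All the computations are routine; the one thing I would be careful to pin down --- and really the only content of the argument --- is the dichotomy that the cases where the count $\delta(b)+\delta(c)+\delta(d)+\delta(\ell)+\delta(r)$ fails to reach the threshold $3$ are exactly the cases where $\delta(b)+\delta(c)+\delta(d)$ is odd, and that in those cases the first-level constraint \eqref{first-level} already forces the output, so wide-compliance holds automatically. The first part rests on a companion observation: along a diagonal step between two $1$s, emptiness of the path together with the \add\ relation forces the appearance of an additional $1$ in the neighborhood.
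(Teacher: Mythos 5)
Your proof is correct and follows essentially the same route as the paper: the first claim is the paper's argument (mirrored left-to-right), using the \add\ update at the destination of the diagonal step to force a third $1$ into the $5$-point neighborhood of that path point, and the second claim is precisely the case-check the paper leaves implicit as ``a similar argument.'' The dichotomy you isolate --- odd parity of $\delta(b)+\delta(c)+\delta(d)$ forces the output to be $1$ via \eqref{first-level}, while even parity together with $\delta(c)=r=1$ (or $\ell=1$) pushes the count to the threshold $3$ --- is exactly the right way to fill in that omitted verification.
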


\begin{pf}
Assume that a $3$-free path makes a leftward diagonal move
on two space--time points in state 0. Denote the
states $a,b,c$ at nearby points thus:
\[
\matrix{ & a& b &0
\vspace*{2pt}\cr
&&0 &c }
\]
We need to show that $b$ and $c$ cannot be both 1. However, if $b=1$, then
also $a=1$, but then $c=0$ as the path is $3$-free. This establishes the
first claim. A similar argument gives the second claim.
\end{pf}

We now state a simple but important lemma that says that,
although the web rules have range 2, empty-compliance
ensures that the ``light speed'' is essentially~$1$.

%
\begin{lemma}[(Light speed)]\label{light-speed}
Assume an empty-compliant web CA.
The state $\xi(x,t)$ depends on the initial configuration $\xi_0$
only through
the states
\[
\lambda_0(x-t-1), \qquad\xi_0(x-t), \ldots,
\xi_0(x+t), \qquad\lambda_0(x+t+1),
\]
where $\lambda$ is defined by \eqref{lambda}.
\end{lemma}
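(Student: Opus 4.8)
The plan is to prove the claim by induction on $t$, tracking simultaneously how the web states $\xi(x,t)$ and the first-level states $\lambda(x,t)$ depend on the initial data. The key observation is that \eqref{first-level} already tells us that $\lambda(x,t)$ depends only on $\lambda_0(x-t),\dots,\lambda_0(x+t)$ (the standard range-$1$ light cone for \add), so the only thing requiring work is the dependence of the full state $\xi(x,t)$, and the only place the range-$2$ nature of $f$ could widen the cone is through the two outermost arguments $\xi_t(x\pm 2)$. Empty-compliance is precisely the hypothesis that neutralizes this: it says $\xi_{t+1}(x)$ depends on $\xi_t(x\pm 2)$ only through $\delta(\xi_t(x\pm 2)) = \lambda_t(x\pm 2)$.

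Concretely, I would set up the induction hypothesis at time $t$ as the conjunction of two statements: (a) $\lambda(x,t)$ is a function of $\lambda_0(x-t),\dots,\lambda_0(x+t)$; and (b) $\xi(x,t)$ is a function of $\lambda_0(x-t-1)$, $\xi_0(x-t),\dots,\xi_0(x+t)$, $\lambda_0(x+t+1)$. Statement (a) is classical for \add\ and can be cited or proved in one line. For the inductive step on (b), write $\xi_{t+1}(x)=f(\xi_t(x-2),\xi_t(x-1),\xi_t(x),\xi_t(x+1),\xi_t(x+2))$. By empty-compliance this equals a function of $\lambda_t(x-2)$, $\xi_t(x-1)$, $\xi_t(x)$, $\xi_t(x+1)$, $\lambda_t(x+2)$. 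Now apply the induction hypothesis to each piece: the three middle terms $\xi_t(x-1),\xi_t(x),\xi_t(x+1)$ depend, by (b) at time $t$, on $\xi_0$ over $[x-1-t,x-1+t]\cup[x-t,x+t]\cup[x+1-t,x+1+t]=[x-t-1,x+t+1]$ together with the $\lambda_0$ values at $x-t-2,x+t+2$; and the outer terms $\lambda_t(x\pm2)$ depend, by (a) at time $t$, on $\lambda_0$ over $[x-2-t,x-2+t]$ and $[x+2-t,x+2+t]$ respectively, which are contained in $[x-t-2,x+t+2]$. Assembling, $\xi_{t+1}(x)$ is a function of $\xi_0$ on $[x-t-1,x+t+1]$ and $\lambda_0$ on $\{x-t-2\}\cup\{x+t+2\}$ — exactly statement (b) at time $t+1$, after replacing $t$ by $t+1$. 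One must also verify (a) at time $t+1$, which is immediate from \eqref{first-level} and (a) at time $t$.

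The only mildly delicate point — and the one I expect to be the main obstacle, though it is minor — is bookkeeping the index ranges carefully, in particular checking that the $\xi_0$-dependence really collapses to the clean interval $[x-t-1,x+t+1]$ rather than leaking to $x\pm(t+2)$, and that the outermost $\lambda_0$-dependence sits exactly at $x\pm(t+2)$. This comes down to the fact that the middle three arguments of $f$ are at cells $x-1,x,x+1$, whose time-$t$ cones reach only to $x\pm(t+1)$ for the $\xi_0$-part, while their $\lambda_0$-part reaches $x\pm(t+2)$ but that is harmlessly absorbed into the $\lambda_0$ boundary term we are allowed anyway. There is no genuine difficulty, but one should state clearly that (b) carries not just a dependence statement but the sharp interval, so that the nesting does not accumulate slack over the $t$ steps. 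Since every step is a finite verification and the induction is transparent, I would present it compactly: one sentence for (a), a short paragraph for the inductive step on (b) with the interval arithmetic displayed in a single (blank-line-free) aligned block if desired, and a closing remark that the base case $t=0$ is trivial since $\xi(x,0)=\xi_0(x)$.
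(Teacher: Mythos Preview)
Your proposal is correct and takes essentially the same approach as the paper: induction on $t$, using empty-compliance to reduce the dependence on the outermost arguments $\xi_t(x\pm 2)$ to their first levels $\lambda_t(x\pm 2)$. The paper compresses the argument into a single line --- observing that the given time-$0$ data determine the analogous data $\lambda_1(x-t),\ \xi_1(x-t+1),\ldots,\xi_1(x+t-1),\ \lambda_1(x+t)$ at time $1$, then invoking induction --- whereas you carry out the same induction more explicitly with the auxiliary statement (a) for $\lambda$; the index bookkeeping you spell out is exactly what the paper's one-line step encodes.
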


\begin{pf}
The given states determine the following states at time $1$:
\[
\lambda_1(x-t), \qquad\xi_1(x-t+1), \ldots,
\xi_1(x+t-1),\qquad \lambda_1(x+t).
\]
Then we use induction.
\end{pf}

\subsection{Examples of rules}

We will introduce several examples of web CA, chosen to represent
various behaviors.
Finding such rules is not particularly difficult, and we know of many
others with
similar characteristics.
Let the function $N_1$ (resp., $N_{2}$) count the number of $1$s
(resp., $2$s)
among its arguments, and $N_{12}=N_1+N_2$.

Our first example is \textit{Web-Xor}, whose update rule is given by
\[
f(a,b,c,d,e)= \cases{ 1,&\quad  $(b+c+d)\mmod2=1,$\vspace*{2pt}
\cr
2, &\quad $(b+c+d)\mmod2=0
\mbox{ and $N_2(b,d)=1$},$\vspace*{2pt}
\cr
0, & \quad$\mbox{otherwise}$.}
\]
It is easy to check that \textit{Web-Xor} is diagonal-compliant and
has no
spontaneous birth. Examples of its evolution are given in Figure~\ref
{webxor}. The top example represents typical behavior: replication with
zero ether from a long random seed. The middle two examples are
quasireplicators, one very simple and one similar to the one in
Theorem~8 of
\cite{GGP}. For many seeds including these two, quasireplication can be
rigorously proved via inductive schemes that completely characterize the
configuration at certain specified times. In more complicated cases, such
schemes can be very laborious to construct, while in other cases it may be
difficult even to determine whether the seed is a quasireplicator. We will
not give proofs of quasireplication; instead we refer the reader to
\cite{GGP} for two typical examples of inductive schemes that feature
in such
arguments. We believe that the final example in Figure~\ref{webxor} is
chaotic.

Even this simplest of rules displays a remarkable variety of behavior from
``exceptional'' seeds. Other interesting seeds that we have found include:
110010012 (a replicator with nontrivial pattern of $2$s in the web), 110011112
(a quasireplicator with scale factor $a=4$), 111001112 (perhaps chaotic
or a very
complicated quasireplicator), 10110112 (apparent chaos restricted to one
side).

\textit{Modified Web-Xor} also has no spontaneous birth,
but the $2$s obey a symmetric two-point \textit{Or} rule in the presence
of $1$s:
\[
f(a,b,c,d,e)= \cases{1, &\quad  $(b+c+d)\mmod2=1,$\vspace*{2pt}
\cr
2, &\quad  $(b+c+d)
\mmod2=0, \mbox{ and}$\vspace*{2pt}
\cr
& \quad$\mbox{either $N_2(b,d)=1$ or}$\vspace*{2pt}\cr
&\quad$\mbox{$\bigl[N_2(b,d)> 1$ and $N_1(\ell, b, c, d, r)\ge1\bigr]$},$\vspace*{2pt}
\cr
0, & \quad$\mbox{otherwise}$.}
\]
As seen in Figure~\ref{mod-webxor}, this rule is capable of ``mixed
replication'' with two different ethers (top). Note that
Theorem~\ref{showcase} implies that with high probability this does not
happen for long random seeds. The bottom example is apparently a
quasireplicator, although we have no proof, and it seems that the inductive
methods of \cite{GGP} do not apply. Here and in the last example of
Figure~\ref{webxor}, it is plausible that the evolution is driven by the
advance of a front that lags behind the edge of the light cone by a power
law. We will discuss this phenomenon in Section~\ref{supercritical}.
%
%
\begin{figure}

\includegraphics{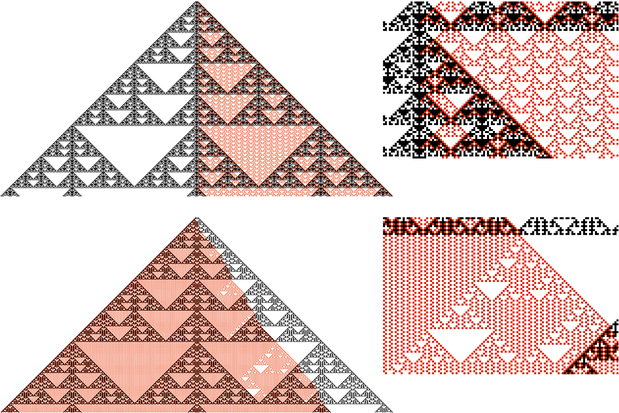}

\caption{\textit{Modified Web-Xor} with seeds $11111112$ and
$210001$.}\label{mod-webxor}
\end{figure}

%
%
%
%
%

In \textit{Web-adapted Rule 30}, $2$s evolve according to \textit
{Rule 30}
\cite{Wol2}, except that $2$s perform the three-point \textit{Or} rule
in the
presence of $1$s when a neighborhood occupation number is small enough:
\[
f(a,b,c,d,e)= \cases{ 1, & \quad$(b+c+d)\mmod2=1,$\vspace*{2pt}
\cr
2, & \quad $(b+c+d)\mmod2=0
\mbox{ and }N_{1}(\ell, b, c, d, r)\le2, \mbox{ and}$\vspace*{2pt}
\cr
&\quad
$\mbox{either $w_{30}\bigl[\delta_2(b),
\delta_2(c),\delta _2(d)\bigr]=1$}$\vspace*{2pt}
\cr
&\quad $\mbox{or $\bigl[N_2(b,c,d)\ge1$ and $N_1(
\ell, b, c, d, r)\ge 1\bigr]$},$\vspace*{2pt}
\cr
0, &\quad $\mbox{otherwise}$. }
\]
Here, $w_{30}$ is the update rule for \textit{Rule 30}, given by
$w_{30}(a_1,a_2,a_3)=(a_1+a_2+a_3+a_2a_3)\mmod2$, and
$\delta_2(a):=\ind[a=2]$. \textit{Web-adapted Rule 30} is $3$-free-compliant
(and therefore wide-compliant) and has no spontaneous birth. See
Figure~\ref{rule30} for an example. One can prove that this instance is
not a
replicator, but is it chaotic? There are no known methods to prove chaotic
evolution, or even universally agreed definitions of the concept; however,
suppose one accepts the reasonable premise that \textit{Rule 30}
generates a
chaotic configuration $\rho$ started from a single $1$ \cite{Wol2}.
Then the
example in Figure~\ref{rule30} is equally chaotic, in the sense that its
evolution provably features larger and larger regions of $\rho$, at specific
locations that are easily characterized. We will also show in
Section~\ref{subcritical} that an exponentially growing family of seeds
exhibit conditional chaos in the same sense.
%
%
\begin{figure}

\includegraphics{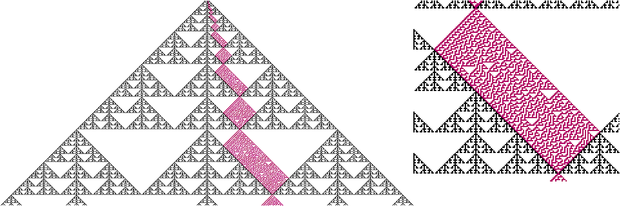}

\caption{Chaotic behavior of \textit{Web-adapted Rule 30} with seed
$100010201$.}
\label{rule30}
\end{figure}
%
%
%
%

The above rule may be modified in various ways so as to include spontaneous
birth, resulting in further rules where Theorem~\ref{showcase}
applies, yet
in which many provable replicators have ethers with very long temporal
period, perhaps too long to be seen experimentally. In the interest of
brevity, we omit the details. We briefly discuss bounds on the period in
Section~\ref{ca}.

A number of web rules arise naturally in analysis of two-dimensional
CA, as
we now explain. Consider a binary CA $\zeta_t\in\{0,1\}^{\bZ^2}$, in which
the new state of cell $z$ is given by a rule defined on the Moore
neighborhood $\cN(z):=\{z'\in\bZ^2\dvtx\Vert z'-z\Vert_\infty\le
1\}$. We
assume that
state $0$ is quiescent, and that the CA \df{solidifies}, that is,
$\zeta_t(z)=1$ implies $\zeta_{t+1}(z)=1$; the CA rule then only
needs to
specify when a $z\in\bZ^2$ becomes occupied, that is, changes its state
from 0
at time $t$ to 1 at time $t+1$. To each such CA, we associate \df{extremal
boundary dynamics (EBD)}: assume that $\zeta_0$ vanishes on $\bZ
\times[1,
\infty)$ and let $\lambda_t$ be given by $\zeta_t$ on $\bZ\times\{
t\}$.
Observe that $\lambda_t$ is a one-dimensional CA whose space--time
configuration is a lower bound on the \df{final configuration}
$\zeta_\infty=\bigcup_{t\ge0}\zeta_t$. Now assume that we extend
the boundary
layer to width 2, which leads to the CA $\xi_t\in\{0,1,2\}^{\bZ}$
with the
following rule: $\xi_t(x)=1$ if $\zeta_t(x,t)=1$ (so that
$\lambda_t=\xi_t\mmod2$), $\xi_t(x)=2$ if $\zeta_t(x,t)=0$ but
$\zeta_{t+1}(x,t)=1$, and $\xi_t(x)=0$ otherwise. Again, $\xi_t$ is a
one-dimensional CA. As $\zeta_t(x,t-1)=1$ exactly when either $\xi
_{t-1}(x)=1$ or
$\xi_t(x)=2$, $\xi_t$ indeed determines \textit{two} extremal layers of
$\zeta_t$, and is thus called \df{two-level EBD}. The evolution of
$\xi_t$
also provides a lower bound on $\zeta_\infty$ and is often useful
when the
bound provided by $\lambda_t$ ``leaks''~\cite{GG2}. To conform with
the rest
of the paper, we assume throughout that the EBD is the \textit{1 Or 3} CA.

The natural setting for study of the issues addressed in this paper are
general web CA, a much larger class than the two-level EBD rules. The latter,
however, provide many interesting examples. In fact, the different ethers,
quasireplicators and (apparent) chaotic behavior were first observed in the
two-level EBD generated by the \textit{Box 13} solidification CA \cite{GG2},
in which $z$ becomes occupied at time $t+1$ when the number of occupied
cells in $\cN(z)$ at time $t$ is 1 or 3. The corresponding two-level
EBD is
called the \textit{Extended 1 Or 3} CA, and is given by
\[
f(a,b,c,d,e)= \cases{1, &\quad $(b+c+d)\mmod2=1,$\vspace*{2pt}
\cr
2, &\quad $(b+c+d)\mmod2=0
\mbox{ and } N_{12}(\ell, r, b, c, d)\in\{ 1,3\}$,\vspace*{2pt}
\cr
0, &\quad $
\mbox{otherwise}$,}
\]
as is easy to check; therefore, this rule is equivalent to the one with the
same name introduced in \cite{GGP}. This rule is $4$-free-compliant,
and is
not covered by our main theorems. However, we establish some rigorous
results in Section~\ref{ethers}.

%
%
\begin{figure}[b]

\includegraphics{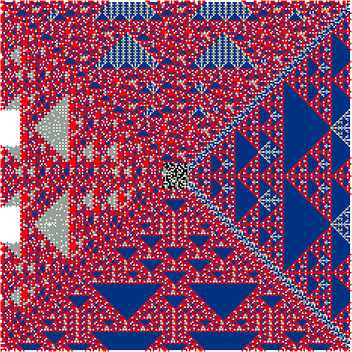}

\caption{\pbox\ started from a seed in the square $[0,16]^2$.
Initially occupied cells are black, and subsequently occupied cells
are red or blue if they have state $1$ or $2$, respectively, in the
$2$-level EBD, and otherwise grey. Unoccupied cells are white.}
\label{box-piggyback}
\end{figure}

\newcommand{\no}{\mathtt{occ}}
For simplicity, assume that the two-dimensional CA $\zeta$ is \df{isotropic},
that is, that its rule respects all isometries of the lattice $\bZ^2$.
Then there is a convenient sufficient condition that assures wide-compliance
for its two-level EBD:
when the neighborhood configuration is
\[
\matrix{ a & 1 & c\vspace*{2pt}
\cr
b & 0 & 1\vspace*{2pt}
\cr
0 & 0 &
0 } %
\]
the next state at the center cell is independent of $c$ (i.e., depends only
on $a$ and~$b$). This holds, for example, for the following solidification
rule, which we call \pbox. Given
$\zeta_t$, let $\no_1(z)$ [resp., $\no_\infty(z)$] count the number of
occupied cells among the four nearest neighbors of $z$ [resp., in $\cN(z)$];
then $z$ becomes occupied if either
\begin{longlist}
%
\item[$\bullet$] $\no_1(z)=2$, or
\item[$\bullet$] $\no_1(z)\le1$ and $\no_\infty(z)\in\{1,3\}$.
\end{longlist}
See Figure~\ref{box-piggyback} for an example.\vadjust{\goodbreak}

The resulting two-level EBD has the update rule
\[
f(a,b,c,d,e)= \cases{1, &\quad $ (b+c+d)\mmod2=1,$\vspace*{2pt}
\cr
2, & \quad $(b+c+d)
\mmod2=0,\mbox{ and}$\vspace*{2pt}
\cr
&\quad $\mbox{either }N_{12}(
\ell,c,r)=2 $\vspace*{2pt}
\cr
& \quad$\mbox{or } \bigl[N_{12}(\ell,c,r)\le1
\mbox{ and }N_{12}(\ell, b, c, d,r)\in\{ 1,3\} \bigr],$\vspace*{2pt}
\cr
0,
& \quad $\mbox{otherwise}$.}
\]
We call this web CA \textit{Piggyback}. It is easy to see that it is
wide-compliant, and has spontaneous birth. The top two examples in
Figure~\ref{piggy} start from long random seeds and are replicators with
different ethers. (We will have more to say about ethers for \textit{Piggyback}
in Section~\ref{ethers}.) The third example is provably
nonreplicating, as it is a quasireplicator. The bottom example appears
to be
chaotic. Like the bottom picture in Figure~\ref{webxor}, the evolution
displays a tantalizing mixture of order and disorder.

Our results on \pig\ have rigorous implications for the two-dimen\-sional
\pbox\ rule (and similarly in other cases where 2-level EBD satisfies the
conditions of Theorem~\ref{showcase}). Here, we summarize some initial
observations, noting that further investigation is warranted. As suggested
by Figure~\ref{box-piggyback}, the evolution of \pbox\ from a seed in
$[0,L]^2$ is governed by four space--time configurations of \pig\ in four
quadrants with boundaries at $45^\circ$ to the axes. Depending on the
behavior of each, we may make deductions about the final configuration
$\zeta_\infty$. In the case of a replicator with the ``solid'' ether
$(2)^\infty$, as in the bottom quadrant in this example, clearly no further
filling of the ether is possible after the second level of the EBD. By
Theorem~\ref{ether-births}, it follows that \pbox\ started from a
uniformly random seed in $[0,L]^2$ results in a final configuration
$\zeta_\infty$ of density $1$ in $\bZ^2$ with probability bounded
away from
$0$ as $L\to\infty$. Certain other ethers of \pig\ can also be shown
to fill in
in a predictable manner, resulting in a corresponding ether for
\pbox,
as in the top quadrant. A~similar analysis can likely be carried
through for
certain simple quasireplicators such as the one in the right quadrant. When
\pig\ is a replicator with zero ether, as in the left quadrant, it appears
plausible that the subsequent filling-in by \pbox\ results in a
chaotic final
configuration. See \cite{GG1,GG2} for detailed analysis of the filling-in
process for some other EBD.

We conclude by mentioning a natural rule that seems intractable
by our current methods. \textit{Web 1 Or 3} is the web CA in which 2s perform
\textit{1 Or 3} on the points not occupied by 1s:
\[
f(a,b,c,d,e)= \cases{ 1, &\quad $(b+c+d)\mmod2=1,$\vspace*{2pt}
\cr
2, &\quad $(b+c+d)\mmod2=0
\mbox{ and $N_2(b,c,d)\mmod2=1$},$\vspace*{2pt}
\cr
0, &\quad $
\mbox{otherwise}$.}
\]
Figure~\ref{web1or3} gives an example of an evolution from a random seed
of $1$s and $2$s, with an apparent message of near-criticality
and chaos.
%
%
\begin{figure}

\includegraphics{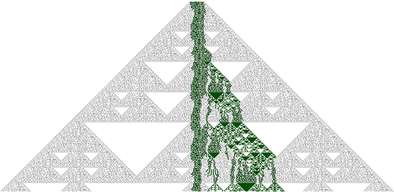}

\caption{Chaotic behavior of \textit{Web 1 Or 3} from a random seed
of 32 sites.}
\label{web1or3}
\end{figure}

\subsection{Generalizations}

The simplest additive rule, \textit{Xor} CA $\mu_t$, is defined on the
state space $\{0,1\}^{\bZ}$ by
\[
\mu_t(x)=\mu_{t-1}(x-1)+\mu_{t-1}(x+1)\mmod2.
\]
One might consider $\mu$, and not $\lambda$, to be the most natural candidate
for the web dynamics. However, while $\mu$ does have some points of interest
(see, e.g., Proposition~\ref{power-law}), many of the main issues we
consider become trivial in this setting. For example, $\mu$ either only
occupies points satisfying a parity constraint or generates an impenetrable
web even for empty paths \cite{BDR,GG1}.

In the other direction, one might ask whether similar results hold if
$\lambda$ is replaced by an arbitrary additive rule. It is indeed likely
that a more general theory could be developed in this setting. One
complication is that predecessors of the all-0 state will no longer
necessarily be unique (as they are for \textit{1 Or 3}---see
Lemma~\ref{predecessor-of-zeros}) and as a result ``mixed replicators''
similar to the top example of Figure~\ref{mod-webxor} may be the norm.

On the other hand, all our results generalize with appropriate minor changes
in the definitions to CA with a quiescent state $0$, first-level state $1$
and other states $2,\ldots, s$.

\section{Additive dynamics from a single occupied site} \label{blob}

Recall that $\lambda^\bullet$ denotes \add\ started from a single $1$.
In this
section, we collect properties that we will need. All these results are
elementary and many are well known. First is a rescaling property,
illustrated in Figure~\ref{rescaling-figure}.
%
%
\begin{figure}

\includegraphics{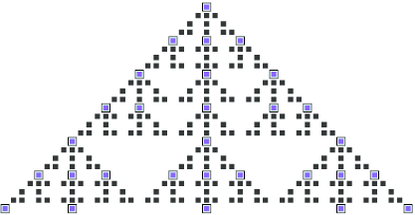}

\caption{An illustration of Lemma \protect\ref{rescaling}, with $m=2$.
Highlighted
points comprise a ``separated out'' copy of $\lambda^\bullet$.}
\label{rescaling-figure}
\end{figure}

%
\begin{lemma}[(Rescaling)]\label{rescaling}
For any nonnegative integers $a$ and $m$,
\[
\lambda^\bullet_{a2^m}(x)= \cases{\lambda^\bullet_a(y),
&\quad $\mbox{if }x=2^my\mbox{ for }y\in\bZ, $\vspace*{2pt}
\cr
0,&\quad $\mbox{otherwise.}$}
\]
\end{lemma}

\begin{pf}
The case $m=1$ follows from additivity on observing that $\lambda
^\bullet_2$
is $10101$. For $m>1$, we apply the $m=1$ case iteratively.
\end{pf}

%
\begin{lemma}[(Periodicity properties)] \label{boundary-periodicity}
%
\begin{longlist}[(iii)]
%
\item[(i)] For $t\ge0$, $\lambda^\bullet_t(0)=\lambda^\bullet_t(\pm
t)=1$ while
$\lambda^\bullet_t(\pm(t-1))=t\mmod2$.
\item[(ii)] For all $n\ge0$, $\lambda^\bullet_{2^n}(x)=1$ exactly at $x=0,
\pm2^n$.
\item[(iii)] For all $n\ge0$, $\lambda^\bullet_{2^n+2^{n-1}}(x)=1$ exactly at
$x=0, \pm2^n, \pm(2^n+2^{n-1})$.
\item[(iv)] For any $k\ge1$, the sequence of edge configurations of
$\lambda^\bullet$ on $[t-k+1,t]\times\{t\}$ is periodic (from $t=0$ on)
with period equal to $2^p$ where $2^{p-1}<k\le2^p$.
\end{longlist}
\end{lemma}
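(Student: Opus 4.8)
The plan is to treat the four parts in order, using three elementary facts about $\lambdasingle$: it is symmetric, $\lambdasingle_t(x)=\lambdasingle_t(-x)$ (rule and seed are both symmetric); its support is confined, $\lambdasingle_t(x)=0$ for $|x|>t$ (a one-line induction on $t$); and the rescaling identity of Lemma~\ref{rescaling}. For part (i), confinement makes the update at the leading edge degenerate, $\lambdasingle_{t+1}(t+1)=\lambdasingle_t(t)+\lambdasingle_t(t+1)+\lambdasingle_t(t+2)=\lambdasingle_t(t)$, so induction from $\lambdasingle_0(0)=1$ gives $\lambdasingle_t(t)=1$, and symmetry gives $\lambdasingle_t(-t)=1$. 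One cell inward, $\lambdasingle_{t+1}(t)=\lambdasingle_t(t-1)+\lambdasingle_t(t)+\lambdasingle_t(t+1)=\lambdasingle_t(t-1)+1$, so the bit $\lambdasingle_t(t-1)$ flips at each step and hence equals $t\mmod 2$ (checking $t=0,1$). Finally $\lambdasingle_t(0)=1$ follows from conservation of the parity of the number of $1$s --- summing the update rule over $x$ shows this parity is constant in $t$, so it is odd for all $t$ as it is at $t=0$ --- together with the symmetry, which pairs off the $1$s at $\pm x$ with $x\ne 0$ and therefore forces an occupied cell at the origin.

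Parts (ii) and (iii) fall out of rescaling. Part (ii) is Lemma~\ref{rescaling} with $a=1$ and $m=n$, using $\lambdasingle_1=111$, so the $1$s land exactly at $2^n\cdot\{-1,0,1\}$. For part (iii), take $n\ge 1$ (the case $n=0$ is degenerate); a one-line computation gives $\lambdasingle_3=1101011$ across positions $-3,\dots,3$, with $1$s exactly at $\{-3,-2,0,2,3\}$, and Lemma~\ref{rescaling} with $a=3$, $m=n-1$ makes $\lambdasingle_{3\cdot 2^{n-1}}=\lambdasingle_{2^n+2^{n-1}}$ carry $1$s exactly at $2^{n-1}\cdot\{-3,-2,0,2,3\}=\{0,\pm 2^n,\pm(2^n+2^{n-1})\}$.

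For part (iv) the engine is part (ii). Writing $\lambdasingle_t$ as a sum of single-site configurations and evolving each for a further $2^p$ steps, additivity and the fact that $\lambdasingle_{2^p}$ is supported exactly on $\{0,\pm 2^p\}$ yield, for every $x$,
\[
\lambdasingle_{t+2^p}(x)=\lambdasingle_t(x-2^p)+\lambdasingle_t(x)+\lambdasingle_t(x+2^p)\;\mmod\;2 .
\]
Evaluating at $x=t+2^p-j$ with $0\le j\le k-1$: since $k\le 2^p$ we have $j\le 2^p-1$, so both $x=t+2^p-j$ and $x+2^p$ exceed $t$ and thus lie outside the support of $\lambdasingle_t$, leaving $\lambdasingle_{t+2^p}(t+2^p-j)=\lambdasingle_t(t-j)$ for all $t\ge 0$; hence $2^p$ is a period of the length-$k$ edge word, already from $t=0$ on. To see the period is no smaller, note that every period is a multiple of the minimal one, so the minimal period divides $2^p$, and it suffices to rule out $2^{p-1}$ in the case $k>2^{p-1}$. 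For this I would compare the edge word at $t=0$, which over $[-k+1,0]$ is a lone $1$ at the right end, with the edge word at $t=2^{p-1}$: the two-sided constraint $2^{p-1}<k\le 2^p$ puts $0$ inside the window $[2^{p-1}-k+1,\,2^{p-1}]$ but keeps $-2^{p-1}$ outside it, so by part (ii) the time-$2^{p-1}$ edge word has a $1$ at distance $2^{p-1}\le k-1$ from the right end, where the time-$0$ word is $0$; the two edge configurations differ, ruling out $2^{p-1}$ as a period. The case $k=1$ (so $p=0$) is immediate from part (i).

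The only step needing a little care is the minimality half of (iv): tracking which of the three occupied cells of $\lambdasingle_{2^{p-1}}$ (and, when $t<k-1$, which occupied cells of $\lambdasingle_t$) fall inside the length-$k$ edge window --- this is exactly where the bound $2^{p-1}<k\le 2^p$ gets used on both sides. Everything else is a short induction or a direct appeal to Lemma~\ref{rescaling}.
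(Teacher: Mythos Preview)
Your proof is correct and follows essentially the paper's route: parts (ii)--(iii) via rescaling, and (iv) via (ii) together with additivity, which is exactly what the paper sketches. Your treatment is in fact more complete than the paper's one-line proof: you spell out the minimality half of (iv), and for (i) you give a direct argument (the parity-conservation trick for $\lambdasingle_t(0)=1$ is a clean way to handle the center column, which the paper's ``(i) is a special case of (iv)'' does not actually cover).
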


\begin{pf}
Parts (ii) and (iii) follow from Lemma~\ref{rescaling}, and (iv) follows
from (ii), with (i) as a special case. (See Figure \ref{boundary-periodicity-figure}.)
\end{pf}

%
\begin{figure}[b]

\includegraphics{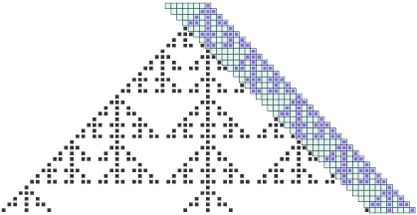}

\caption{Evolution of $\lambda^\bullet$ with
highlighted boundary strip of width $8$ and temporal period $8$.}
\label{boundary-periodicity-figure}
\end{figure}

For some purposes, the following recursive description of
$\lambda^\bullet
$ is
useful, a variant of the one given \cite{Wil}. See
Figure~\ref{B-recursion-figure} for an illustration. Given a space--time
configuration $A$ on $S_n=[0, 2^n]\times[0, 2^n-1]$, we say that $A$ is
\df{placed} at a space--time point $s$ if the configuration in $s+S_n$
is the
corresponding translate of $A$. Let $B_n$ be the space--time
configuration of
$\lambda^\bullet$ on $S_n$. Reflect $B_n$ around its vertical
bisector and
denote the resulting configuration on $S_n$ by $\mB_n$.

%
\begin{lemma}[(Recursion)] \label{B-recursion}
We have $B_0=10$ and
$B_1=
{1\enskip 0\enskip 0\atop 1\enskip 1\enskip 0
}
$. Moreover, for $n\ge2$,
$B_n$ is obtained by placing $B_{n-1}$ at $(0,0)$ and at $(2^{n-1}, 2^{n-1})$;
$B_{n-2}$ at $(0, 2^{n-1})$ and at $(0, 2^{n-1} +2^{n-2})$; and
$\mB_{n-2}$ at $(2^{n-2},2^{n-1})$ and at $(2^{n-2}, 2^{n-1} +2^{n-2})$.
All placements result in consistent state assignments at overlaps.
\end{lemma}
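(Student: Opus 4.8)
The plan is to dispatch the base cases by inspection and prove the recursion for $n\ge2$ by cutting $S_n$ along the line $t=2^{n-1}$, exploiting additivity of \add\ together with Lemmas~\ref{rescaling} and \ref{boundary-periodicity}. The identities $B_0=10$ and the given $B_1$ are read off the rule directly. For $n\ge2$ the two facts I would use repeatedly are: (a) the light-cone property $\lambdasingle_t(x)=0$ for $|x|>t$; and (b) that $\lambdasingle_{2^{n-1}}$ is supported exactly on $\{0,\pm2^{n-1}\}$ and $\lambdasingle_{2^{n-2}}$ exactly on $\{0,\pm2^{n-2}\}$, both instances of Lemma~\ref{boundary-periodicity}(ii).

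The upper half $[0,2^n]\times[0,2^{n-1}-1]$ is immediate from (a): cells with $x\ge2^{n-1}$ vanish there, so this region is $B_{n-1}$ placed at $(0,0)$, with zeros filling the columns past $2^{n-1}$. For the lower half I would write the configuration along $t=2^{n-1}$, by (b), as the XOR of the three unit masses at $0$ and $\pm2^{n-1}$, and propagate each forward using additivity. The mass at $-2^{n-1}$ does not reach any cell with $x\ge0$ before time $2^n$, so it may be dropped, and on $[0,2^n]\times[2^{n-1},2^n-1]$ one gets $\lambdasingle(x,2^{n-1}+s)=\lambdasingle_s(x)\oplus\lambdasingle_s(x-2^{n-1})$, which by the left--right symmetry of $\lambdasingle$ equals $\lambdasingle_s(x)\oplus\lambdasingle_s(2^{n-1}-x)$. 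For $x\in[2^{n-1},2^n]$ the first term vanishes by (a), leaving $\lambdasingle_s(x-2^{n-1})$, i.e.\ $B_{n-1}$ placed at $(2^{n-1},2^{n-1})$.

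The crux is the interference sub-block $x\in[0,2^{n-1}]$, $t\in[2^{n-1},2^n-1]$, where I set $g(x,s):=\lambdasingle_s(x)\oplus\lambdasingle_s(2^{n-1}-x)$. This is symmetric under $x\mapsto 2^{n-1}-x$, hence under reflection about $x=2^{n-2}$, so it suffices to identify $g$ on $x\in[0,2^{n-2}]$; the reflected copy on $x\in[2^{n-2},2^{n-1}]$ then matches $\mB_{n-2}$ placed at $(2^{n-2},2^{n-1})$ and $(2^{n-2},2^{n-1}+2^{n-2})$, by a direct comparison with the definition of $\mB_{n-2}$. On $x\in[0,2^{n-2}]$ with $s<2^{n-2}$ the second term of $g$ vanishes (since $2^{n-1}-x\ge2^{n-2}>s$), so $g(x,s)=\lambdasingle_s(x)$, which is $B_{n-2}$ placed at $(0,2^{n-1})$. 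For $s\in[2^{n-2},2^{n-1}-1]$ I would apply additivity a second time at the internal time $2^{n-2}$: fact (b) shows that $g(\cdot,2^{n-2})$, restricted to the cone feeding $[0,2^{n-2}]$ in the remaining steps, collapses to a single $1$ at $x=0$, whence $g(x,2^{n-2}+s')=\lambdasingle_{s'}(x)$ there, i.e.\ $B_{n-2}$ placed at $(0,2^{n-1}+2^{n-2})$. Overlap consistency is then a short check: the shared columns $x=2^{n-2},2^{n-1}$ and the shared row $t=2^{n-1}$ carry the same value from both adjoining pieces --- zeros, by (a) or the vanishings just noted, in the column cases, and $\lambdasingle_{2^{n-1}}$ along the row --- while the other internal boundaries separate pieces occupying disjoint rows.

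The main obstacle is precisely this interference sub-block: one must bookkeep carefully which light cones reach which cells within the bounded window $t<2^n$ (this is exactly what makes the six pieces fit together as $S_n$), and observe that once the overlapping contribution is collapsed at the internal time $2^{n-2}$ the interference pattern is again a clean single-site evolution, with the reflection symmetry of $g$ producing the mirrored copies $\mB_{n-2}$. No induction on $n$ is in fact needed beyond iterating additivity: since each $B_k$ is by definition a window of $\lambdasingle$, every assertion reduces to an equality between two such windows, which the two applications of additivity supply.
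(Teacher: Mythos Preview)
Your argument is correct and follows essentially the same route as the paper's, which simply cites parts (i)--(iii) of Lemma~\ref{boundary-periodicity}: your second application of additivity at time $2^{n-2}$ amounts to re-deriving part (iii) rather than invoking it directly. One small slip in the overlap check: the shared column $x=2^{n-1}$ between the lower $B_{n-1}$ and the $\mB_{n-2}$ blocks carries all $1$s (both sides read it as the $x=0$ column of $\lambdasingle$, which is identically $1$ by Lemma~\ref{boundary-periodicity}(i)), not zeros; and there is no shared row at $t=2^{n-1}$, since the upper $B_{n-1}$ occupies only rows $[0,2^{n-1}-1]$.
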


Our results for seeds depend on the fact that $\lambda^\bullet$ has
certain a unique periodic configuration above every region of $0$s.
This property does not hold for general additive rules.

%
\begin{lemma}[(Predecessors of $0$)] \label{predecessor-of-zeros}
For an arbitrary initial state $\lambda_0$, suppose that
$\lambda_t\equiv0$ on $[a,b]$, but $\lambda_{t-1}\not\equiv0$
on $[a-1,b+1]$. Then $\lambda_{t-1}$ is
a subword of the periodic word $(110)^\infty$ on $[a-1,b+1]$.
\end{lemma}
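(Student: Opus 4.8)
The plan is to read the {\it 1 Or 3\/} rule ``backwards'' over the interval on which $\lambda_t$ vanishes: there it collapses to a second-order linear recursion modulo $2$, and the nonzero solutions of that recursion are precisely the shifts of the periodic word $(110)^\infty$.

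First I would record the update rule at time $t$: for every $x\in[a,b]$,
$$0=\lambda_t(x)=\lambda_{t-1}(x-1)+\lambda_{t-1}(x)+\lambda_{t-1}(x+1)\pmod 2 .$$
Writing $w(y):=\lambda_{t-1}(y)$ for $y\in[a-1,b+1]$, this is exactly the statement that $w(x+1)\equiv w(x)+w(x-1)\pmod2$ for all $x\in[a,b]$; equivalently, every length-$3$ block $\bigl(w(y-1),w(y),w(y+1)\bigr)$ with $y\in[a,b]$ has an even number of $1$s, i.e.\ lies in $\{000,110,101,011\}$.

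Next I would run the recursion. Because $[a,b]$ is nonempty, $x$ sweeps across all of $[a,b]$, so the entire word $w(a-1),w(a),\dots,w(b+1)$ is determined by the single initial pair $\bigl(w(a-1),w(a)\bigr)$. The three nonzero initial pairs $(1,1)$, $(1,0)$, $(0,1)$ generate, respectively, prefixes of $110110\cdots$, $101101\cdots$, $011011\cdots$ --- these are just the Fibonacci sequence mod $2$ and its two shifts, each of period $3$ --- and every such prefix is a subword of $(110)^\infty$. The only remaining initial pair is $(0,0)$, and this is where the second hypothesis is used: from $\bigl(w(a-1),w(a)\bigr)=(0,0)$ the recursion forces $w\equiv0$ on all of $[a-1,b+1]$, i.e.\ $\lambda_{t-1}\equiv0$ there, contradicting the assumption. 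Hence the initial pair is one of the three nonzero ones and the lemma follows.

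There is no substantial obstacle in this argument; the one point that needs care is precisely the degenerate pair $(0,0)$, since without the nonvanishing hypothesis on $\lambda_{t-1}$ the stated conclusion would be false. One could alternatively phrase the whole proof without solving the recursion: the block condition above rules out $111$ and, once the case of a $000$ block has been excluded as here, forces each pair $11,10,01$ to be followed by its unique ``rotation-of-$110$'' completion, so that $w$ traverses the $3$-cycle $11\to10\to01\to11$ and is therefore a factor of $(110)^\infty$.
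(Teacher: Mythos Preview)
Your proof is correct and follows essentially the same approach as the paper: the paper's proof simply says to consider the four possible values of the pair $\bigl(\lambda_{t-1}(a-1),\lambda_{t-1}(a)\bigr)$ and observe that the rest of $\lambda_{t-1}$ on $[a-1,b+1]$ is then determined sequentially. You have written out this case analysis in full detail, including the explicit identification of the recursion as Fibonacci mod~$2$, but the underlying idea is identical.
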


\begin{pf}
Consider the four possible values for the pair $\lambda_{t-1}(a-1)$ and
$\lambda_{t-1}(a)$. Once these states are fixed, the rest of $\lambda_{t-1}$
on $[a-1,b+1]$ can be determined sequentially.
\end{pf}

Fix an initial state for $\lambda$. A \df{void} is a finite inverted triangle
of the form $\bigcup_{i\ge0}([a+i, b-i]\times\{t+i\})$, on which the
configuration is identically 0, and that is maximal with these properties
with respect to inclusion. Its \df{width} is $b-a+1$, and its \df
{start time}
is $t$.

%
\begin{lemma}[(Voids)] \label{voids}
In $\lambda^\bullet$, each void has width $2^k-1$ and start time
divisible by
$2^{k-1}$ for some integer $k\ge1$. Furthermore, for every fixed $k$,
the union of all voids of width at least $2^k-1$
has density 1 within the forward cone of $(0,0)$.
\end{lemma}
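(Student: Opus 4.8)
The plan is to handle the two assertions — the classification of void dimensions, and the density claim — separately; the classification is the substantive part.

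\emph{Reduction.} I would first record that the top row of any void is a maximal run of $0$s, flanked by $1$s, that cannot be extended upward. Since $\lambdasingle_s(\pm s)=1$ (Lemma~\ref{boundary-periodicity}(i)), every maximal $0$-run in a row of $\lambdasingle$ is bounded on both sides by $1$s; and one application of \add\ maps the central $w$ cells of a block $1\,0^{w}\,1$ to $1\,0^{w-2}\,1$ (for $w\le2$ the run simply closes up). Hence, by downward induction, the inverted triangle below a maximal $0$-run $[a,b]\times\{t\}$ has at each level $i$ the all-$0$ row $[a+i,b-i]$ flanked by the $1$s at $a+i-1$ and $b-i+1$; in particular that triangle is entirely $0$. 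Consequently, if the top row of a void $V$ were not a maximal $0$-run of its row, widening it to the maximal run would give a strictly larger all-$0$ inverted triangle containing $V$, a contradiction; so $\lambdasingle_t(a-1)=\lambdasingle_t(b+1)=1$. And since $V$ is maximal it cannot extend upward, so $\lambdasingle_{t-1}$ is non-zero somewhere on $[a-1,b+1]$, and Lemma~\ref{predecessor-of-zeros} makes $\lambdasingle_{t-1}$ on $[a-1,b+1]$ a subword of $(110)^\infty$ of length $w+2$.

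\emph{Widths and start times.} I would prove by induction on $n$ that every void with start time $<2^{n}$ has width $2^{k}-1$ and start time divisible by $2^{k-1}$ for some $k\ge1$, the small cases being checked directly. For the step, use the self-similar decomposition in Lemma~\ref{B-recursion}: $B_{n}$ is tiled by two translated copies of $B_{n-1}$, two copies each of $B_{n-2}$ and of its reflection $\mB_{n-2}$, and an all-$0$ region. A void starting before time $2^{n-1}$ sits inside the bottom copy of $B_{n-1}$ (it cannot leak into the adjacent all-$0$ region, as that would make it non-maximal), so it is covered by the inductive hypothesis. For a void starting at time $t\in[2^{n-1},2^{n}-1]$, the rows at times $2^{n-1}$ and $2^{n-1}+2^{n-2}$ have their $1$s exactly at $0,\pm 2^{n-1},\pm(2^{n-1}+2^{n-2})$ (Lemma~\ref{boundary-periodicity}(ii),(iii)), and this, together with the vertical ``walls'' of $1$s $\{(0,s):s\ge0\}$ and their translates that form the common edges of the placed sub-blocks, forces the only void straddling two sub-blocks to be the ``big'' void $[1,2^{n-1}-1]\times\{2^{n-1}\}$ of width $2^{n-1}-1$ and start time $2^{n-1}$; every other void lies inside one placed copy of $B_{n-1}$, $B_{n-2}$ or $\mB_{n-2}$ and is a genuine void there, so induction (for $\mB_{n-2}$ applied to the mirror-image process) gives its width $2^{k}-1$, and divisibility of the start time follows since the copies are placed at times that are multiples of $2^{n-2}$ and $k\le n-1$. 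The hard part is exactly this last point — confirming that the decomposition produces no void straddling sub-block boundaries outside the classification — which is why the precise $1$-positions at times $2^{m}$ and $2^{m}+2^{m-1}$, and the walls of $1$s, are needed. (Alternatively one may argue from the factorization $(1+z+z^{2})^{t}\equiv\prod(1+z^{2^{i}}+z^{2^{i+1}})\pmod 2$ of the $t$-th row, the product running over the binary digits $i$ of $t$, reducing to odd $t$ via Lemma~\ref{rescaling}; this exposes each row's run structure but does not shorten the identification of void tops.)

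\emph{Density.} This part needs neither the classification nor percolation. First, every $0$ of $\lambdasingle$ inside the forward cone $C=\{(x,t):|x|\le t\}$ lies in a void: its maximal $0$-run is bounded by $1$s, the triangle below it is all $0$, and extending that triangle upward must terminate (its top row widens by $2$ at each step and would eventually meet the column $x=0$, where $\lambdasingle_s(0)=1$), yielding a maximal all-$0$ inverted triangle, i.e.\ a void. Hence $\bigcup\{\text{voids}\}\supseteq\{0\text{s in }C\}$, and the latter has density $1$ in $C$ because $\Lambda$ has density $0$ in $\bZ^{2}$ while $C$ has positive density. Second, a void of width $w$ is contained in $\Lambda(w)$, since each of its rows $[a+i,b-i]\times\{t+i\}$ is flanked by $1$s, so every point of the void is within $\ell^{1}$-distance $w$ of a point of $\Lambda$. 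Thus $\bigcup\{\text{voids of width}<2^{k}-1\}\subseteq\Lambda(2^{k}-2)$, a set of density $0$. Deleting this null set from the density-$1$ set $\{0\text{s in }C\}$ leaves $\bigcup\{\text{voids of width}\ge 2^{k}-1\}$ with density $1$ in $C$, as required; in fact the argument shows the stronger fact that for each fixed $k$ the union of all voids of width $<2^{k}-1$ has density $0$.
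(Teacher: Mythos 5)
Your proof takes essentially the same route as the paper, whose entire argument for this lemma is that it is ``a straightforward application of Lemma~\ref{B-recursion}'': you carry out exactly that induction via the recursive decomposition, and your reduction of void tops to maximal $0$-runs flanked by $1$s, your use of Lemma~\ref{predecessor-of-zeros} and Lemma~\ref{boundary-periodicity}, and your density argument via $\Lambda(2^k-2)$ having density $0$ are all sound. The only piece still to be written out is the step you yourself flag---that no void with start time in $[2^{n-1},2^n)$ other than the big one straddles the sub-block boundaries, together with the small companion points that a void of $\lambdasingle$ contained in a placed copy is a genuine void of that copy and that its vertical extent stays within the copy's time range---but you have correctly identified the facts (the exact $1$-positions at times $2^{n-1}$ and $2^{n-1}+2^{n-2}$ and the columns of $1$s along sub-block edges) from which this follows.
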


\begin{pf}
This is a straightforward application of Lemma~\ref{B-recursion}.
\end{pf}

Finally, we deduce the following fact, which will be crucial for our results
on percolation and ethers. See Figure~\ref{above-voids-figure} for an
illustration.

%
\begin{figure}

\includegraphics{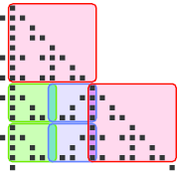}

\caption{Recursive description of $\lambda^\bullet$: $B_4$ is composed
of two copies
of $B_3$ (red), 2 copies of $B_2$ (green) and two copies of $\mB_2$ (blue).}
\label{B-recursion-figure}
\end{figure}

\begin{pf}
This follows easily from (i), (ii) and (iii) of Lemma~\ref
{boundary-periodicity}.
\end{pf}

%
\begin{figure}

\includegraphics{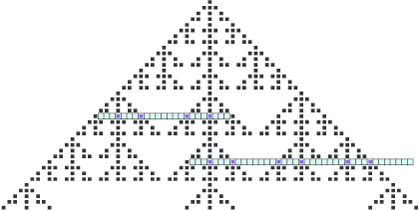}

\caption{Illustration of Proposition \protect\ref{above-voids} with $m=2$.
The highlighted intervals at distance $2^2$ above two selected voids
have the claimed periodic configuration.}
\label{above-voids-figure}
\end{figure}

%
\begin{prop}[(Periodic interval above a void)] \label{above-voids}
In $\lambda^\bullet$, assume that $[a,b]\times\{t\}$ is the top row
of a void of width $2^k-1$. For $m<k$,
the state of interval $[a-2^m,b+2^m]\times\{t-2^m\}$
is a segment of the following infinite periodic string of period
$3\cdot2^m$:
%
%
\begin{equation}
\label{above-voids-string} (1 \lsquare 1 \lsquare 0 \lsquare)^\infty.
\end{equation}
Here, $\lsquare$ represents a string of $2^m-1$ consecutive $0$s, and the
segment begins and ends with a full $\lsquare$.
\end{prop}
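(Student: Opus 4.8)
The plan is to use the rescaling identity of Lemma~\ref{rescaling} to reduce the statement to the case $m=0$, where it is essentially Lemma~\ref{predecessor-of-zeros}.

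I would begin by recording two preliminary facts about the void. Since it has width $2^k-1$, Lemma~\ref{voids} gives $2^{k-1}\mid t$, hence $2^m\mid t$ and $2^m\mid t-2^m$ (using $m<k$); by Lemma~\ref{rescaling} the rows $\lambdasingle_t$ and $\lambdasingle_{t-2^m}$ then vanish off the sublattice $2^m\bZ$ and satisfy $\lambdasingle_{2^m s}(2^m y)=\lambdasingle_s(y)$. Second, the two sites flanking the void's top row carry $1$s, i.e.\ $\lambdasingle_t(a-1)=\lambdasingle_t(b+1)=1$: if, say, $\lambdasingle_t(a-1)=0$, then $\lambdasingle_t\equiv0$ on $[a-1,b]$, and a one-line induction on $i$ using the rule of \add\ shows $\lambdasingle_{t+i}\equiv0$ on $[a-1+i,b-i]$ for all $i\ge0$; this is an all-$0$ inverted triangle strictly containing the void, contradicting its maximality (the right-hand case is symmetric). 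Combining the two facts forces $a-1,b+1\in2^m\bZ$, so with $\tilde a:=(a-1)/2^m$ and $\tilde b:=(b+1)/2^m$ we have $\tilde b-\tilde a=2^{k-m}$.

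Next I would pass to the $2^m$-compressed picture, which by Lemma~\ref{rescaling} is again $\lambdasingle$; put $\tilde t:=t/2^m$. There the void's top row together with its flanking $1$s becomes: $\lambdasingle_{\tilde t}$ equals $1\,0\cdots0\,1$ (with $2^{k-m}-1$ interior zeros) on $[\tilde a,\tilde b]$. The target row $\lambdasingle_{t-2^m}$ on $[a-2^m,b+2^m]$ un-compresses as follows: the points of $2^m\bZ$ inside $[a-2^m,b+2^m]$ are exactly $a-1,\,a-1+2^m,\dots,\,b+1$, preceded and followed by precisely $2^m-1$ off-lattice sites, so that row reads
$$
\lsquare\ v_{\tilde a}\ \lsquare\ v_{\tilde a+1}\ \lsquare\ \cdots\ \lsquare\ v_{\tilde b}\ \lsquare ,
$$
where $\lsquare$ is a block of $2^m-1$ zeros and $v_y:=\lambdasingle_{\tilde t-1}(y)$. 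Viewing $(1\,\lsquare\,1\,\lsquare\,0\,\lsquare)^\infty$ as alternating value-slots (cycling through $1,1,0$) and $\lsquare$-blocks, the displayed row is a segment of that string beginning and ending with a full $\lsquare$ precisely when the finite word $(v_{\tilde a},\dots,v_{\tilde b})$ is a contiguous subword of $(110)^\infty$. So the statement reduces to establishing that.

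To do so, I would apply Lemma~\ref{predecessor-of-zeros} in the compressed picture to the zero-interval $[\tilde a+1,\tilde b-1]$ at time $\tilde t$ (nonempty since $k-m\ge1$): it yields that $\lambdasingle_{\tilde t-1}$ restricted to $[\tilde a,\tilde b]$, which is exactly the word $(v_{\tilde a},\dots,v_{\tilde b})$, is a subword of $(110)^\infty$ --- \emph{provided} $\lambdasingle_{\tilde t-1}\not\equiv0$ on $[\tilde a,\tilde b]$. Proving this non-triviality is the main obstacle, and I expect to get it from the maximality of the void once more: if $\lambdasingle_{\tilde t-1}\equiv0$ on $[\tilde a,\tilde b]$, then every $v_y=0$, so by the display $\lambdasingle_{t-2^m}\equiv0$ on all of $[a-2^m,b+2^m]$; but then the same one-line induction with the rule of \add\ shows $\lambdasingle_{t-2^m+i}\equiv0$ on $[a-2^m+i,b+2^m-i]$ for every $i\ge0$, an all-$0$ inverted triangle with top $[a-2^m,b+2^m]\times\{t-2^m\}$ that strictly contains the void --- a contradiction. (When $m=0$ the whole scheme collapses to a direct application of Lemma~\ref{predecessor-of-zeros} to the row just below the void, again with non-triviality supplied by maximality.) Apart from this step, everything is routine bookkeeping with the rescaling identity.
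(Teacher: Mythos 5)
Your proof is correct and follows essentially the same route as the paper's: compress by $2^m$ via Lemma~\ref{rescaling}, apply Lemma~\ref{predecessor-of-zeros} to the row just below the compressed void, and rule out the all-zero alternative by maximality of the void. The only difference is that you spell out the lattice-alignment bookkeeping (the flanking $1$s forcing $a-1,\,b+1\in 2^m\bZ$) that the paper leaves implicit.
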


\begin{pf}
As $t$ is divisible by $2^{k-1}$, and therefore by $2^m$,
$\lambda^\bullet_t$
on $[a,b]\times\{t\}$ is of the form
\[
\lsquare0 \lsquare\cdots0 \lsquare,
\]
by Lemma~\ref{rescaling}. Then, by the same lemma,
and Lemma~\ref{predecessor-of-zeros} applied to $\lambda_{t/2^m}$,
the configuration on $[a-2^m,b+2^m]\times\{t-2^m\}$ is
either of the claimed type started and ended with $\lsquare$,
or all $0$s. The latter possibility contradicts maximality of the
original void.
\end{pf}

\section{Duality and randomness}\label{dual}

When the initial configuration of \add\ is uniformly random (on some set),
the resulting space--time configuration is of course not uniformly
random but
has a high degree of dependence. Nevertheless, in this section we show how
to identify space--time \emph{sets} on which the randomness is
uniform. The
additive structure of the CA rule ensures that the space--time configuration
is a linear function (modulo $2$) of the initial states, and the idea
is to
find cases where the associated matrix is upper triangular.

Recall that $\lambda^\bullet_t$ is the \textit{1 Or 3} rule started
with only
the origin occupied. Let $\lambda_t^A$ denote the rule started with
the set
of initially occupied sites exactly equal to $A\subseteq\bZ$. We will
extensively use the following version of cancellative duality.

%
\begin{lemma}[(Duality)]\label{duality}
We have $\lambda_t^A(x)=\sum_{y\in A} \lambda^\bullet_t(x-y) \mmod2$.
\end{lemma}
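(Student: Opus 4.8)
The key observation is that a single step of \add\ is an $\mathbb F_2$-linear operator on configurations: if $\lambda_{t+1}(x)=\lambda_t(x-1)+\lambda_t(x)+\lambda_t(x+1)\bmod 2$, then for any two configurations $\mu,\nu$ we have $(\mu\oplus\nu)_{t+1}=\mu_{t+1}\oplus\nu_{t+1}$, where $\oplus$ denotes coordinatewise addition mod $2$. Iterating, the map $\lambda_0\mapsto\lambda_t$ is $\mathbb F_2$-linear for every fixed $t$; write this map as $\Phi_t$, so $\lambda_t^A=\Phi_t(\ind_A)$, where $\ind_A$ is the indicator configuration of $A$.

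\textbf{Main steps, in order.} First I would decompose the initial indicator: $\ind_A=\bigoplus_{y\in A}\ind_{\{y\}}$, the (possibly infinite, but locally finite) mod-$2$ sum of single-site indicators. Since $A\subseteq\bZ$ is arbitrary this sum need not be finite, so strictly I would note that $\Phi_t$ is \emph{local}: the value $\Phi_t(\cdot)(x)$ depends on only finitely many input coordinates (those in $[x-t,x+t]$), so the infinite sum causes no difficulty — only finitely many of the $\ind_{\{y\}}$ with $y\in A$ contribute to coordinate $x$. Second, by translation invariance of the update rule, $\Phi_t(\ind_{\{y\}})$ is just the translate by $y$ of $\Phi_t(\ind_{\{0\}})$, i.e.\ $\Phi_t(\ind_{\{y\}})(x)=\lambdasingle_t(x-y)$. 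Third, combining linearity with these two facts,
$$
\lambda_t^A(x)=\Phi_t(\ind_A)(x)=\Bigl(\bigoplus_{y\in A}\Phi_t(\ind_{\{y\}})\Bigr)(x)=\sum_{y\in A}\lambdasingle_t(x-y)\bmod 2,
$$
where again the sum is effectively finite for each fixed $(x,t)$ because only $y\in A\cap[x-t,x+t]$ matter. This is exactly the claimed identity.

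\textbf{Expected main obstacle.} There is no serious obstacle here — the lemma is essentially immediate from linearity, and the only point requiring a word of care is the one flagged above: making sure the manipulation with the mod-$2$ sum is legitimate when $A$ is infinite. I would handle this by invoking Lemma~\ref{light-speed}'s analogue for \add\ (finite light speed: $\lambda_t(x)$ depends only on $\lambda_0$ restricted to $[x-t,x+t]$), which reduces everything at a given point to a finite computation and lets linearity be applied term by term. Formally one could instead prove the statement first for finite $A$ by induction on $|A|$ using a single application of $\mathbb F_2$-linearity of one time step, then extend to infinite $A$ by the locality remark; either route is routine.
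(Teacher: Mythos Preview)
Your proposal is correct and takes essentially the same approach as the paper, which simply states that the identity ``follows easily by additivity and induction on $t$.'' Your write-up is more detailed than the paper's one-line proof---in particular your care with infinite $A$ via locality is a nice touch---but the underlying idea (linearity of the update over $\mathbb F_2$ plus translation invariance) is identical.
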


\begin{pf}
This follows easily by additivity and induction on $t$.
\end{pf}

Observe that by symmetry and translation-invariance,
$\lambda^\bullet_t(x-y)=\lambda^\bullet_t(y-x)=\lambda^{\{x\}}_t(y)$.

Suppose we have an ordered set $S=\{(x_i,t_i)\dvtx i=1,2,\ldots, n\}$, of
space--time points. A function $F\dvtx S\to\bZ$ is a \df{dual assignment}
for $S$
if for all $i,j\in\{1,\ldots,n\}$,
\[
\lambda^\bullet \bigl(x_j-F(x_i,t_i),t_j
\bigr)= \cases{ 1,&\quad$\mbox{if }j=i$,\vspace*{2pt}
\cr
0,&\quad $\mbox{if }j<i$. }
\]
(There is no restriction when $j>i$.) We think of $F(\cdot,\cdot)$ as sites
in the initial configuration. The idea is that in order to determine
$\lambda^A(x_i, t_i)$, we need new information about $A$ at each successive
$i$.

%
\begin{prop}[(Randomness via dual assignment)]\label{dual-assignment}
Suppose that the initial configuration $\lambda_0$ of \add\ is uniformly
random on some fixed set $K\subseteq\bZ$ and deterministic on $K^C$.
Let $S$
be a fixed set of space--time points. If $S$ has a dual assignment
whose image
is contained in $K$, then $\lambda$ is uniformly random on $S$.
\end{prop}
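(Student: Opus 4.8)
The plan is to exploit additivity to write the vector $(\lambda^A(x_i,t_i))_{i=1}^n$ as a linear function (over $\mathbb{F}_2$) of the entries $(\lambda_0(z))_{z\in K}$, and to show that the dual assignment makes the relevant matrix lower-triangular with unit diagonal (after restricting to the coordinates in its image), hence invertible. Once the map is an $\mathbb{F}_2$-linear bijection from a subset of coordinates of the uniform random vector $(\lambda_0(z))_{z\in K}$, the image $(\lambda^A(x_i,t_i))_{i}$ is itself a uniform random element of $\{0,1\}^n$, which is exactly the assertion.

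Here are the steps. First, by Lemma~\ref{duality} and the observation following it, for each $i$ we have
$$
\lambda^A(x_i,t_i)=\sum_{z\in A}\lambdasingle(x_i-z,t_i)=\sum_{z\in A}\lambda^{\{z\}}_{t_i}(x_i)\ \bmod 2.
$$
Writing $A=\{z:\lambda_0(z)=1\}$, this is a linear combination (mod $2$) of the random bits $\lambda_0(z)$, $z\in K$, plus a deterministic contribution from the bits on $K^C$; the coefficient of $\lambda_0(z)$ is $\lambdasingle(x_i-z,t_i)$. Second, let $z_j:=F(x_j,t_j)\in K$ for $j=1,\dots,n$; these need not be distinct, but that does not matter. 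Consider the $n\times n$ matrix $M$ over $\mathbb{F}_2$ with entries $M_{ij}=\lambdasingle(x_i-z_j,t_i)$, i.e., the coefficient of $\lambda_0(z_j)$ in the expression for $\lambda^A(x_i,t_i)$. The defining property of a dual assignment says precisely $M_{ii}=1$ and $M_{ij}=0$ for $j<i$; thus $M$ is upper-triangular with $1$s on the diagonal (with the index convention $M_{ij}$, row $i$, column $j$), hence invertible over $\mathbb{F}_2$. Third, condition on all of $\lambda_0$ outside the set $\{z_1,\dots,z_n\}$ (in particular on $K^C$); since those are all part of $K\cup K^C$ and the bits on $K\setminus\{z_1,\dots,z_n\}$ are independent uniform and independent of $\lambda_0(z_1),\dots,\lambda_0(z_n)$, conditioning on them fixes a deterministic shift vector $v\in\mathbb{F}_2^n$ and leaves
$$
\bigl(\lambda^A(x_i,t_i)\bigr)_{i=1}^n \;=\; M\,\bigl(\lambda_0(z_j)\bigr)_{j=1}^n \;+\; v \pmod 2 .
$$
The only subtlety is that the $z_j$ may repeat; if $z_j=z_{j'}$ then the two columns of $M$ are equal, contradicting invertibility — so in fact the dual-assignment condition forces the $z_j$ to be distinct, and $(\lambda_0(z_j))_j$ is a genuine uniform random vector in $\mathbb{F}_2^n$ (being a subvector of the i.i.d.\ family on $K$). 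Fourth, an invertible affine map over $\mathbb{F}_2$ sends the uniform distribution on $\mathbb{F}_2^n$ to the uniform distribution on $\mathbb{F}_2^n$; hence conditionally on the frozen bits, $(\lambda^A(x_i,t_i))_i$ is uniform on $\{0,1\}^n$. Since this holds for every value of the conditioning, it holds unconditionally, which is the claim.

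I do not expect a serious obstacle here; the argument is essentially bookkeeping once Lemma~\ref{duality} is in hand. The one point requiring a little care is the reduction to the $n$ columns indexed by the image of $F$ and the verification that the dual-assignment conditions really do yield an invertible (triangular) matrix rather than merely a triangular rectangular array — in particular noticing that distinctness of the $z_j$ is automatic. Everything else (the affine structure, the conditioning on the remaining i.i.d.\ bits, and the invariance of the uniform measure under affine bijections) is routine.
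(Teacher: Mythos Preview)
Your proposal is correct and follows essentially the same approach as the paper: both use Lemma~\ref{duality} to express $\lambda(x_i,t_i)$ as an $\mathbb{F}_2$-linear function of the initial bits, and both exploit the triangular structure guaranteed by the dual assignment. The paper phrases this sequentially (each $\lambda(x_i,t_i)$ is uniform conditional on the earlier ones, because the fresh bit $\lambda_0(F(x_i,t_i))$ enters its sum but none of the previous sums), whereas you phrase it globally via an invertible triangular matrix; these are standard equivalent ways of saying the same thing. One minor slip: with your convention $M_{ij}=\lambdasingle(x_i-z_j,t_i)$, the dual-assignment condition actually gives $M_{ij}=0$ for $i<j$ (lower triangular), not $j<i$ as you wrote --- but of course this does not affect invertibility or the conclusion.
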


\begin{pf}
Writing
\begin{eqnarray*}
K_i&=&\bigl\{y\in K\dvtx\lambda_{t_i}(x_i-y)=1
\bigr\},
\\
K_i'&=&\bigl\{y\in K^C\dvtx
\lambda_{t_i}(x_i-y)=1\bigr\}
\end{eqnarray*}
and
\[
c_i=\sum_{y\in K_i'}\lambda_0(y)
\mmod2,
\]
we have by Lemma~\ref{duality},
\[
\lambda(x_i,t_i)=\sum_{y\in K_i}
\lambda_0(y)+c_i\mmod2.
\]
But $K_i$ contains an element, $F(x_i,t_i)$, that is not in $\bigcup_{j<i}
K_j$, therefore, $\lambda(x_i,t_i)$ is uniformly random conditional on
$(\lambda(x_j,t_j)\dvtx j<i)$.
\end{pf}

A particularly useful special case is that a $1$ adjacent to a string
of $0$s
in $\lambda^\bullet$ heralds uniformly random intervals in the evolution
from a
random seed.

%
\begin{cor}[(Random intervals)]
\label{one-zeros} Fix integers $a$ and $L,k>0$. Let the initial
configuration $\lambda_0$ be a uniformly random binary seed on
$[0,L]$, and
suppose that $\lambda^\bullet_t$ on $[a,a+k]$ is $1$ followed by $k$
$0$s. Then
for any $x\in[-a,L-a-k]$, the configuration $\lambda_t$ is uniformly random
on $[x,x+k]$.
\end{cor}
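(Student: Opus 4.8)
The plan is to deduce this immediately from Proposition~\ref{dual-assignment} (Randomness via dual assignment) by writing down an explicit dual assignment for the interval $S=[x,x+k]\times\{t\}$. The relevant random set is $K=[0,L]$, on which the seed $\lambda_0$ is uniformly random and off which it is deterministically $0$, exactly as that proposition requires; so it suffices to produce a dual assignment for $S$ whose image lies in $[0,L]$.

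First I would restate the hypothesis using the reflection symmetry $\lambdasingle_t(z)=\lambdasingle_t(-z)$ (the remark following Lemma~\ref{duality}): since $\lambdasingle_t$ on $[a,a+k]$ is a $1$ followed by $k$ zeros, we get $\lambdasingle_t(-a)=1$ while $\lambdasingle_t(-a-1)=\cdots=\lambdasingle_t(-a-k)=0$. Next, order the points of $S$ from left to right as $(x_i,t_i)=(x+i,t)$, $i=0,1,\dots,k$, and propose the dual assignment $F(x+i,t):=x+i+a$. Verifying the defining property of a dual assignment reduces to computing $\lambdasingle_t\bigl(x_j-F(x_i,t)\bigr)=\lambdasingle_t\bigl((j-i)-a\bigr)$: for $j=i$ this equals $\lambdasingle_t(-a)=1$, and for $j<i$ we have $i-j\in\{1,\dots,k\}$, so it equals $\lambdasingle_t(a+(i-j))=0$ by the values above; there is no constraint for $j>i$. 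Thus $F$ is a dual assignment for $S$, and its image is the interval $[x+a,\,x+a+k]$, which is contained in $K=[0,L]$ precisely because $x\in[-a,L-a-k]$. Proposition~\ref{dual-assignment} then yields that $\lambda_t$ is uniformly random on $S$.

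The only point requiring care is the bookkeeping in the construction of $F$: one has to choose the correct orientation (hence passing through the reflection of $\lambdasingle$, which converts the ``$1$ then $k$ zeros'' pattern near $a$ into ``$k$ zeros then $1$'' near $-a$) and the correct ordering of $S$, so that the points $x_j$ with $j<i$ sit on zeros of $\lambdasingle_t$ while, simultaneously, the assigned seed sites $F(x_i,t)$ land inside $[0,L]$ rather than to its left. With the orientation above both happen, and this produces exactly the range $x\in[-a,L-a-k]$ in the statement; everything else is a direct appeal to Proposition~\ref{dual-assignment}.
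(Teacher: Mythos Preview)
Your proof is correct and follows essentially the same approach as the paper: both use the reflection symmetry of $\lambdasingle$, order $[x,x+k]\times\{t\}$ from left to right, take the dual assignment $F(y,t)=y+a$, and invoke Proposition~\ref{dual-assignment}. Your write-up is simply a more detailed unpacking of the same argument.
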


\begin{pf} By symmetry, $\lambda_t^\bullet$ on $[-a-k, -a]$
is $k$ $0$s followed by $1$. To find a dual assignment of
$[x,x+k]\times
\{t\}$,
order the set from left to right, and let $F(y,t)=y+a$. Clearly, the
image of
this assignment is contained in $[0,L]$. Now apply
Proposition~\ref{dual-assignment}.
\end{pf}

\section{Subcritical percolation}\label{subcritical}

In this section, we prove Theorem~\ref{no-percolation}, which states
that when
\add\ is started from a uniformly random initial configuration on $\bZ
$, the
probability of an empty diagonal or wide path from the initial interval
$\bZ\times\{0\}$ to the point $(0,t)$ decays exponentially in $t$. See
Figures \ref{fluiddiag} and \ref{fluidwide} for the diagonal and wide cases,
respectively. Note the contrast with Figure~\ref{fluidempty} in the next
section for empty paths.

%
\begin{figure}

\includegraphics{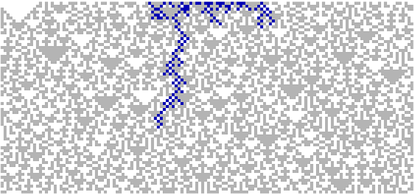}

\caption{All empty diagonal paths from an interval at time $0$ are highlighted
in blue. The initial configuration is uniformly random.}\vspace*{-3pt}
\label{fluiddiag}
\end{figure}

%
\begin{figure}[b]\vspace*{-3pt}

\includegraphics{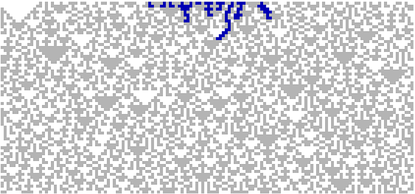}

\caption{All wide paths from an interval at time $0$ are highlighted
in blue.
The initial configuration is uniformly random.}
\label{fluidwide}
\end{figure}

Our approach is to use dual assignments to control the probabilities of
paths, but the details of the argument are very different for the two types
of path. A~diagonal path has $2$ choices at each step, and any given point
has state $0$ with probability $1/2$, suggesting a critical bound. To
improve this to a subcritical bound, we consider a \emph{leftmost}
path, and
use special properties of $\lambda$. On the other hand, we control wide
paths via a random process of space--time intervals that terminates
when an
interval has even length.

Later in the section we also discuss $\theta$-free paths, and show that
notwithstanding Theorem~\ref{no-percolation}, there is an exponential family
of initial configurations for which percolation by wide paths does occur.

\subsection{Empty diagonal paths}
\mbox{}
\begin{pf*}{Proof of Theorem~\ref{no-percolation}, case of empty
diagonal paths}
We may assume without loss of generality that $t$ is even, since the
configuration at time $1$ is also uniformly random, and the probability in
question is strictly less than $1$ for $t=1$.

Fix a diagonal path $\pi$ from $(x,0)$ to $(0,t)$. We will find an upper
bound for the probability that $\pi$ is the \emph{leftmost} empty diagonal
path from $\bZ\times\{0\}$ to $(0,t)$. To this end, partition the
steps of
$\pi$ into segments of length 2. During each such segment, the path
has one
of the following forms: left--left, right--right, left--right or right--left.
When $\pi$ makes a right-left move, that is $(x,s)\to(x+1,s+1)\to(x,s+2)$,
the leftmost property requires a $1$ at $(x-1, s+1)$; we call these points
(which are not on the path) the \df{corner points} of the path, and let
$N(\pi)$ be their number, that is, the number of right--left segments
that start
at even times.

%
\begin{figure}[b]

\includegraphics{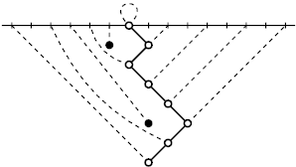}

\caption{A leftmost diagonal path together with a dual assignment for the
points of the path (white discs) and the corner points (black discs). The
dashed lines connect each point to its assigned position in the initial
configuration.} \label{diagonal-figure}
\end{figure}

We will give a dual assignment $F$ of the path together with the set of its
corner points (see Figure~\ref{diagonal-figure} for an illustration). Order
points on the path with increasing time, and place a corner point
$(x,s)$ in
the ordering immediately \textit{after} the point on the path at the time
$s+1$. For every corner point $(x,s)$, let $F(x,s)=x-s+1$. For every point
$(x,s)$ on the path, let $F(x,s)$ be either $x-s$ or $x+s$, according to
whether the path arrives to $(x,s)$ from the right [i.e., from $(x+1,s-1)$]
or from the left [i.e., from $(x-1,s-1)$], respectively. We let $F(x,0)=x$.

To check that $F$ is a dual assignment, we will use
Lemma~\ref{boundary-periodicity}(i). Fix a point $(x,s)$ on the path. All
positions assigned by $F$ to points earlier in the order lie in $[x-s+2,x+s]$
or $[x-s,x+s-2]$ according to whether the path arrives to $(x,s)$ from the
right of left, so the required condition is satisfied for this point. Now
suppose $(x-1, s+1)$ is a corner point arising from the moves
$(x,s)\to(x+1,s+1)\to(x,s+2)$ in the path. This corner point is
assigned to
$F(x-1,s+1)=x-s-1$. We have $F(x,s+2)=x-s-2$, and all points earlier than
$(x,s+2)$ were assigned integers at least $x-s$. Since $s+1$ is odd,
$\lambda^\bullet(x-1-(x-s-1),s+1)=\lambda^\bullet(s,s+1)=1$.
Finally, since $s+2$ is
even, $\lambda^\bullet(x-(x-s-1),s+2)=\lambda^\bullet(s+1,s+2)=0$,
as required.

Now, using Proposition~\ref{dual-assignment},
\begin{eqnarray*}
%
&&\P\bigl(\mbox{an empty diagonal path from $\bZ\times\{0
\}$ to $(0,t)$ exists}\bigr)
\\
&&\qquad\le\sum_\pi\P\bigl(\pi\mbox{ is the leftmost
empty diagonal path from $\bZ \times\{0\}$ to $(0,t)$}\bigr)
\\
&&\qquad\le\sum_\pi\biggl(\frac12\biggr)^{t+1+N(\pi)},
\end{eqnarray*}
where both sums are over all diagonal paths from $\bZ\times\{0\}$ to $(0,t)$.
Let $P_t$ be the last sum above. Then, by considering the last two
steps of
the path,
\[
P_{t+2}= \bigl(\bigl(\tfrac12\bigr)^2+\bigl(\tfrac12\bigr)^2+\bigl(
\tfrac12\bigr)^2+\bigl(\tfrac 12\bigr)^3 \bigr)P_t,
\]
so, recalling that $t$ is even, $P_t=(1/2)\cdot(7/8)^{t/2}$.
\end{pf*}

As an aside, we mention that the assertion of Theorem~\ref{no-percolation}
for diagonal paths also holds when $\lambda$ is replaced by the
\textit{Xor}
CA $\mu$, with a much simpler proof, since the set of all space--time points
that the origin is connected to by diagonal paths is a rectangle.

\subsection{Wide paths}

\mbox{}
\begin{pf*}{Proof of Theorem~\ref{no-percolation}, case of wide paths}
We will prove that
%
%
\begin{equation}
\label{wide-eq1} \P \bigl(\exists\mbox{ a wide path from $(0,0)$ to $\bZ\times\{t
\}$} | \lambda_0(0)=0 \bigr) < e^{-ct}
\end{equation}
for some $c>0$. This clearly suffices by translation-invariance, since there
are only $2t+1$ points at time $0$ from which a path can reach $(0,t)$.
Therefore, we will henceforth assume that $\lambda_0(0)=0$ and that
$\lambda_0$ is uniformly random elsewhere.

We recursively define intervals $I_t=[L_t,R_t]$ for $t=-1, 0,\ldots, T$,
where $T\le\infty$, as follows. Start with $L_{-1}=R_{-1}=0$; then
let $I_0$
be the maximal subinterval of $\bZ$ containing 0 on which $\lambda
_0\equiv
0$. If $I_t=\varnothing$, then we set $T=t$ and there is no $I_{t+1}$.
Otherwise, if $|I_t|\ge2$, then $I_{t+1}$ is the interval $[L_t+1,R_t-1]$
(which is $\varnothing$ when $|I_t|=2$). If $|I_t|=1$, then $I_{t+1}$
is the
maximal subinterval of $\bZ$ containing $R_t=L_t$ on which
$\lambda_{t+1}\equiv0$. Observe that for each $t<T$ we have $\lambda
_t\equiv
0$ on $I_t$, while $\lambda_t(L_t-1)=\lambda_t(R_t+1)=1$. This
follows from
the CA rule for $\lambda_t$ by induction on $t$; the key observation
is that
if $|I_t|=1$ then $\lambda_{t+1}(L_t)=0$ (see Figure~\ref{wide-figure}).
Furthermore, any wide path started at $(0,0)$ is within $\bigcup_{t<T}
(I_t\times\{t\})$.

%
\begin{figure}

\includegraphics{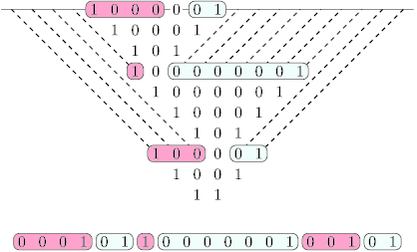}

\caption{The process of intervals of zeros used to prove
nonpercolation by
wide paths. Here, the interval lengths $(|I_0|,|I_1|,\ldots,|I_T|)$ are
$(5,3,1, 7,5,3,1, 4,2,0)$. The witness points are highlighted. The
corresponding binary sequence $(X_1,\ldots,X_{2T+1})$ is shown below;
it is
obtained by reading the states of the witness points in conventional text
order on the page, except with the left (red) intervals reversed. The dual
assignment of witness points to initial positions is shown via dashed lines
(witness points in the top row are assigned to themselves).}
\label{wide-figure}
\end{figure}

%
%
%
%

We now define an ordered sequence of $2T+1$ space--time points, which
we call
\df{witness} points, associated with the above sequence of intervals. If
$|I_t|=1$, we call $t+1$ a \df{refresh time}; we also declare 0 a refresh
time. Let $0=\tau_0<\tau_1<\cdots$ be the refresh times. We build the
sequence of witness points by appending certain points at each refresh time
$\tau_i$, in order. Specifically, for every $i$, we append all points in
$I_{\tau_i}\times\{\tau_i\}$, with the exception of $z_i=(L_{\tau_{i}-1},
\tau_i)$, in the following order: points to the left of $z_i$ in the
right-to-left order, followed by points to the right of $z_i$ in the
left-to-right order. Let $X_i=\lambda(s_i)$, where $s_1,\ldots, s_{2T+1}$
are the witness points in the order described. Write $X$ for the random finite
or infinite sequence given by $X=(X_1,\ldots, X_{2T+1})$ if $T<\infty
$ and
$X=(X_1,X_2,\ldots)$ if $T=\infty$. Our goal is to show that $X$ is
equal in
distribution to a sequence of independent fair coin flips stopped at a
certain a.s. finite stopping time.

Let $Y_1,Y_2,\ldots$ be independent random variables taking values $0$
and $1$
with equal probability. Partition this sequence into blocks of the form
$0^a10^b1$, where $a,b\ge0$, and let $S\geq1$ be the location of the
endpoint of the
first such block of odd length. Then $S$ is a.s. finite and
%
%
\begin{equation}
\label{wide-eq2} \P(S\ge t)< e^{-ct},
\end{equation}
since $S$ is at most the waiting time for the pattern $11011$. Write
$Y':=(Y_1,\ldots, Y_S)$. We claim
%
%
\begin{equation}
\label{wide-eq3} X\eqd Y'.
\end{equation}
Once \eqref{wide-eq3} is proved, the exponential bound \eqref{wide-eq2}
implies \eqref{wide-eq1}, since $ 2T+1\eqd S$.

We now proceed to prove \eqref{wide-eq3}. Since $\P(S<\infty)=1$,
the random
variable $Y'$ has countable support, thus it suffices to show that
$\P(X=y)=\P(Y'=y)$ for any $y$ with $\P(Y'=y)>0$. Choose such a $y$. The
event $\{X=y\}$ determines $T$ and $I_0,\ldots, I_T$ and, therefore, the
locations of the witness points. It follows that the event $\{X=y\}$ is
precisely the event that $\lambda$ takes the specified values $y$ on these
(deterministic) witness points. Now we use a dual assignment to show via
Proposition~\ref{dual-assignment}
\[
\P(X=y)= \bigl(\tfrac{1}2 \bigr)^{\mathrm{length\ of\ }y},
\]
as required. Considering the witness points in their order, we assign
to each
$(x,t)$ either $x+t$ or $x-t$, depending on whether it is to the right or
left of $z_i$ in its interval. (See Figure~\ref{wide-figure}.) This is
a dual
assignment simply because $\lambda^\bullet_t(\pm t)=1$ and
$\lambda^\bullet_t(x)=0$ for $x\in[-t,t]^C$.
\end{pf*}

\subsection{$\theta$-free paths}

We now discuss various aspects of $3$-free and $4$-free paths. The results
of this subsection are not needed for the proofs of
Theorems \ref{showcase}--\ref{stability}.

Since $3$-free paths are wide (Lemma~\ref{3-free}), the exponential bound
\eqref{exponential-bound} holds for $3$-free paths and there are no infinite
$3$-free paths when $\lambda_0$ is uniformly random on $\bZ$. Do such paths
exist started from special initial conditions? Indeed they do, as shown by
our next result.

Define the sequence of \df{principal voids} $V_0,V_1,\ldots$ of
$\lambda^\bullet$ according to Figure~\ref{principal-voids} (so that $V_{2j}$
and $V_{2j+1}$ have width $2^{j+1}-1$ and respective start times
$2\cdot2^j$
and $3\cdot2^j$). For $L>0$, let $W_i^L=V_i\cap(V_i+(L,0))$ [the notation
means that $V_i$ is translated by the vector $(L,0)$], and observe that $W_i$
is filled with $0$s when \add\ is started from any seed on $[0,L]$.
%
%
\begin{figure}[b]

\includegraphics{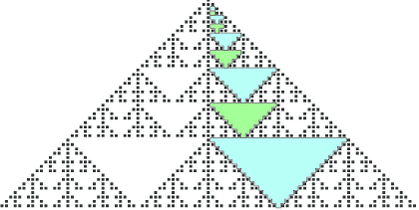}

\caption{The sequence of principal voids $V_0, V_1,\ldots, V_8$, numbered
from top to bottom; $V_0$ and $V_1$ each consist of a single point.}
\label{principal-voids}
\end{figure}

See Figure~\ref{exceptional} for an illustration of the next result,
and of
Corollary~\ref{rule-30-chaos} below. 
%
%
%
\begin{figure}

\includegraphics{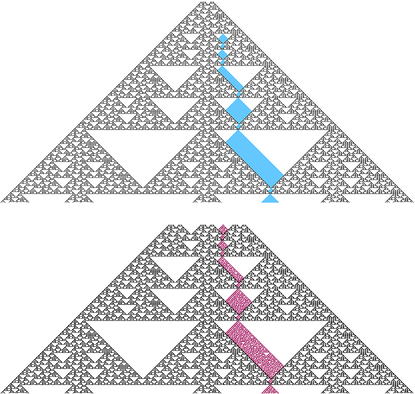}

\caption{Top: infinite 3-free paths (starting at time $64$) in \add\
from a seed supported in $4\bZ$. Bottom:
corresponding evolution of \textit{Web-adapted Rule 30}, started $64$
time units later with a single $2$ added,
and containing successively larger portions of \textit{Rule 30} evolution.}
\label{exceptional}
\end{figure}

%
\begin{prop}[(Exceptional percolation)]\label{exponentially-many}
Assume $\lambda_0$ is a seed on $[0,L]$ that vanishes outside $4\bZ$, where
$L$ is a multiple of $4$. Define the sets $W_i=W_i^L$ as above. Let $i$ be
such that $L\leq2^{\lfloor i/2\rfloor}-2$. From every point in $W_i$ there
is a $3$-free path to some point in $W_{i+1}$. In particular, from any such
point there is an infinite $3$-free path.
\end{prop}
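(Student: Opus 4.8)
The plan is to produce the required path by an explicit construction, in two stages, and then to iterate. Begin with a geometric observation: each horizontal slice of a void $V_i$ is an integer interval, so if $(x,t)$ and $(x-L,t)$ both lie in $V_i$ then the whole segment $[x-L,x]\times\{t\}$ does too; hence by Lemma~\ref{duality}, for any seed $A\subseteq[0,L]$ we have $\lambda_t(x)=\sum_{y\in A}\lambdasingle_t(x-y)\equiv 0$ throughout $W_i=V_i\cap(V_i+(L,0))$ — this is the remark made in the statement, and $W_i$ is a nonempty inverted triangle of $0$s because $L\le2^{\lfloor i/2\rfloor}-2$ is far below the width $2^{\lfloor i/2\rfloor+1}-1$ of $V_i$. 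From Figure~\ref{principal-voids} (equivalently from Lemma~\ref{boundary-periodicity}(ii),(iii) with Lemma~\ref{voids}) the principal voids are stacked in time: the apex of $V_i$ sits one row above the top row of $V_{i+1}$, at a site belonging to that top row. The right-shift in the definition of $W_i$ pushes $W_i$ into the right portion of $V_i$, where it tapers to a tip $O(L)$ rows — at most $2^{\lfloor i/2\rfloor-1}$ — above the top row of $W_{i+1}$; bridging this gap is the crux of the proof.

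Stage~1 is trivial: from the given $p\in W_i$ take any monotone lattice path inside $W_i$ down to its tip. These points lie well inside the void $V_i$, so their $5$-point neighbourhoods lie in $V_i$ too, and the path so far is empty and $3$-free (the last few rows, whose neighbourhoods graze the boundary of $W_i$, are absorbed into Stage~2). For Stage~2, the bridge, I would apply Proposition~\ref{above-voids} to the void $V_{i+1}$: at each height $2^m$ above its top row the corresponding row of $\lambdasingle$ is the explicit periodic word $(1\,\lsquare\,1\,\lsquare\,0\,\lsquare)^\infty$ of period $3\cdot2^m$, whose runs of $0$s have lengths $2^m-1$ and $2^{m+1}-1$. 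The configuration $\lambda$ of the actual seed is the mod-$2$ superposition of the $|A|\le L/4+1$ translates of $\lambdasingle$ by the sites of $A\subseteq4\bZ\cap[0,L]$, and both hypotheses enter here. Since the shifts are all at most $L\le2^{\lfloor i/2\rfloor}-2$, which is comparable to but below the height of the bridge and the periods arising there, and since (by Lemma~\ref{rescaling} via Lemma~\ref{duality}) the whole configuration is supported on $4\bZ$ at times divisible by $4$, one checks that near the narrow corridor the path descends the occupied cells stay confined to short, well-separated blocks, rather than the widening clusters that a seed on $2\bZ$ would produce. The path is then routed down the long $0$-runs, taking a bounded detour past each such block; a finite case analysis shows the $5$-point neighbourhood of every path point then contains at most two $1$s, so the path is empty and makes no diagonal step between two $1$s — a $3$-free path — and it can be brought onto the top row of $W_{i+1}$.

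Finally, since $\lfloor(i+1)/2\rfloor\ge\lfloor i/2\rfloor$, the hypothesis $L\le2^{\lfloor i/2\rfloor}-2$ is inherited by $i+1$, so Stages~1 and 2 repeat indefinitely and the concatenation is an infinite $3$-free path — in particular a wide path, by Lemma~\ref{3-free}. The real work, and the step I expect to be the main obstacle, is the Stage~2 verification: one must track the $O(L)$ superposed translates of $\lambdasingle$ near $V_{i+1}$ together with the periodic boundary structure from Proposition~\ref{above-voids} — especially at small heights, where that structure is densest — and confirm case by case that the routed path never sees three $1$s in a $5$-point neighbourhood; the hypotheses that $A\subseteq4\bZ$ and that $L\le2^{\lfloor i/2\rfloor}-2$ are exactly what make this bookkeeping close. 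One could alternatively organize the argument around the self-similar recursion of Lemma~\ref{B-recursion}, reducing to a base case and propagating by scaling, but the substance is the same.
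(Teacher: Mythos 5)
Your Stage~1 and the final iteration step are fine, and you have correctly located the difficulty, but Stage~2 --- the bridge from the apex of $W_i$ down to the top row of $W_{i+1}$ --- is precisely the content of the proposition, and your proposal does not prove it. The phrases ``one checks that \dots\ the occupied cells stay confined to short, well-separated blocks'' and ``a finite case analysis shows'' stand in for the entire argument. That case analysis is not finite in any evident sense: the configuration in the bridge region is the mod-$2$ superposition of up to $L/4+1$ translates of $\lambdasingle$ with $L$ unbounded, and Proposition~\ref{above-voids} only describes single rows of $\lambdasingle$ at heights $2^m$ above one void, not the superposed configuration on every row of the corridor. Moreover ``short, well-separated blocks'' is not an accurate description at odd times (already $\lambdasingle_1=111$, and superpositions of translates produce runs of $1$s of length $2$), and nothing in your sketch rules out the path getting stuck: a maximal interval of $0$s of even length can simply die (e.g.\ beneath $1\,0\,0\,1$ both middle sites become $1$), so ``routing down the long $0$-runs'' needs a reason why a $0$-run always survives to the next row.

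The two facts that close the argument, and that your sketch is missing, are these. First, since $\lambda_t$ is supported on $2\bZ$ at all even times, a short case-check shows that \emph{every maximal interval of $0$s has odd length at every time}; an odd-length maximal $0$-interval always has a nonempty successor (the maximal $0$-interval one row below that meets it), so the descent never terminates. Second, since $\lambda_t$ is supported on $4\bZ$ at times divisible by $4$, one verifies that from every point of a maximal $0$-interval $I_t$ there is a $3$-free path to its successor $I_{t+1}$ --- this single-row verification is the genuinely finite case analysis. Chaining these steps from the apex $w_i$ of $W_i$ yields an infinite $3$-free path, and the hypothesis $L\le 2^{\lfloor i/2\rfloor}-2$ is then used for one precise purpose: it makes the top interval of $W_{i+1}$ wide enough to contain the intersection of the forward cone of $w_i$ with its row, so any path from $w_i$ (which necessarily stays in that cone) is captured by $W_{i+1}$. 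Reorganizing your Stage~2 around maximal $0$-intervals and their successors, rather than around the periodic rows of Proposition~\ref{above-voids}, is what turns the sketch into a proof.
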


\begin{pf}
For a maximal interval $I_t\subset\bZ$ on which $\lambda_t$
vanishes, define
its \df{successor} $I_{t+1}$ to be the maximal interval on which
$\lambda_{t+1}$ vanishes and such that $I_t\cap I_{t+1}\ne\varnothing
$; if
such an interval does not exist, let $I_{t+1}=\varnothing$.

Observe that $\lambda_t$ vanishes outside $2\bZ$ at all even times
$t$. Using
this, it is easy to verify by case-checking that every maximal interval of
$0$s has odd length at every time. It follows that any nonempty maximal
interval of $0$s has a nonempty successor. Similarly, since $\lambda_t$
vanishes outside $4\bZ$ when $t$ a multiple of $4$, it is easily verified
that for any $t$, from every point in $I_t\times\{t\}$ there is a $3$-free
path to some point in $I_{t+1}\times\{t+1\}$.

Let the \df{apex} $w_i$ be the bottommost point of $W_i$ (which is unique
since $L$ is even). Clearly, from every point in $W_i$ there is a $3$-free
path to $w_i$. Moreover, by the above, there is a $3$-free path from
$w_i$ to
$\bZ\times\{t_{W_{i+1}}\}$, where $t_{W_{i+1}}$ is the time of the top
interval of $W_{i+1}$. Finally, the condition $L\leq2^{\lfloor
i/2\rfloor}-2$ ensures that this top interval contains the
intersection of
the forward cone of $w_i$ with $\bZ\times\{t_{W_{i+1}}\}$.
\end{pf}

In fact, as a consequence of our next result, there are exponentially many
(in~$L$) seeds on $[0,L]$ whose forward cone contains an infinite $3$-free
path starting at time~$0$. As a result, as discussed in Section~\ref{prelim}
we can construct an exponential family of binary seeds which, after a
suitable replacement of some 0s by 2s, yield seeds for the \textit{Web-adapted
Rule 30} CA that are ``as chaotic'' as \textit{Rule 30}. We now make this
precise. Let $\cE_T$ denote the configuration of \textit{Rule 30}
started from
a single~$1$ at the origin, restricted to the region $\{(x, t)\dvtx
|x|\le
t\le
T\}$, and with all $1$s changed to~$2$s.

%
\begin{cor}[(Chaos in \textit{Web-adapted Rule 30})]\label{rule-30-chaos}
There exist at least $c_1 \exp(c_2 L)$ binary seeds on $[0,L]$ with the
following property. Each of these seeds has a location $z\in[0,L]$ occupied
by a 0; if this 0 is changed into a 2, the resulting seed for \textit
{Web-adapted Rule 30} generates a configuration that contains, for all $T$,
a~translated copy of $\cE_T$ within the first $c_3 T$ time steps. Here,
$c_1,c_2,c_3$ are absolute positive constants.
\end{cor}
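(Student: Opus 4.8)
\medskip
\noindent\emph{Proof proposal.}
The plan is to combine a count of suitable \add\ seeds with Proposition~\ref{exponentially-many} and the fact that \emph{Web-adapted Rule 30} reduces to pure \emph{Rule 30} inside any region where its first level is identically $0$ on all relevant neighbourhoods. For the counting, I would put $L'=4\lfloor L/4\rfloor$ and $m=\lfloor L'/8\rfloor$ and take all binary seeds on $[0,L]$ whose support is contained in $4\bZ\cap\bigl([0,m)\cup(L'-m,L']\bigr)$; there are at least $2^{m/2}\ge c_1 e^{c_2 L}$ of these, and each vanishes on the central interval $[m,L'-m]$. Fix one such seed $\lambda_0$, pick $z\in[m,L'-m]$ near the midpoint $L'/2$ (so $\lambda_0(z)=0$, and $\lambda_0$ vanishes on a $\bZ$-interval of length $\Theta(L)$ around $z$), and let $\xi$ be the \emph{Web-adapted Rule 30} evolution started from $\xi_0:=\lambda_0$ with the single change $\xi_0(z)=2$; its first level is the \add\ evolution $\lambda$ of $\lambda_0$, which contains no $2$'s since there is no spontaneous birth.

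I would treat small and large $T$ separately. For $T$ at most a fixed multiple of $L$: by Lemma~\ref{light-speed}, $\xi$ restricted to the cone $R=\{(x,s):|x-z|\le s\le T\}$ (and a unit enlargement thereof) depends only on $\xi_0$ restricted to a $\Theta(T)$-neighbourhood of $z$, which by the choice of $z$ contains no occupied seed site; so there $\xi_0$ reads ``$2$ at $z$, $0$ elsewhere'', and in particular $\lambda\equiv0$ on $R$ and its neighbourhoods. Consequently $N_1(\ell,b,c,d,r)=0$ at every relevant rule application inside $R$, so the \emph{Web-adapted Rule 30} rule collapses to $\xi_{t+1}(x)=2 \iff w_{30}[\delta_2(\xi_t(x-1)),\delta_2(\xi_t(x)),\delta_2(\xi_t(x+1))]=1$; identifying the state $2$ with $1$, this is exactly \emph{Rule 30}, so $\xi$ on $R$ is the \emph{Rule 30} evolution from a single $2$ --- a translated copy of $\cE_T$, occurring within the first $T$ time steps.

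For $T$ larger than that fixed multiple of $L$, I would instead use Proposition~\ref{exponentially-many}: since $\lambda_0$ is supported on $4\bZ$, it supplies infinite $3$-free paths of $\lambda$ through the principal voids $W_i=W_i^{L'}$, which (once $2^{\lfloor i/2\rfloor}\gg L'$) have width $\Theta(2^{\lfloor i/2\rfloor})$ and occupy times $\Theta(2^{\lfloor i/2\rfloor})$. Pick the least $i$ with width exceeding $100T$, so $W_i$ lies at times $\Theta(T+L)=O(T)$. By Lemma~\ref{3-free} and Lemma~\ref{compliance} (\emph{Web-adapted Rule 30} being $3$-free-compliant with no spontaneous birth), every $2$ of $\xi$ is confined to the $3$-free-reachable set of $\lambda$ from $(z,0)$. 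One then argues that the planted $2$ travels, along a $3$-free path from $(z,0)$ supplied by Proposition~\ref{exponentially-many}, and arrives at the widest row of $W_i$ near its centre as a single isolated $2$; the collapse to \emph{Rule 30} above then shows $\xi$ agrees there with a translated copy of $\cE_T$ (whose triangle fits inside $W_i$ since the half-width of $W_i$ far exceeds $2T$), occurring within $O(T)$ time steps. Taking $c_3$ to dominate all implied constants finishes the argument.

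I expect the main obstacle to be the penultimate claim: that the planted $2$ reaches the widest row of a wide void, near its centre, \emph{as a single isolated $2$} rather than as a spread-out cluster of $2$'s. The tools I would use are the rigidity of the first level along the path of the $2$ --- any row of $0$'s has a $(110)^\infty$-subword predecessor (Lemma~\ref{predecessor-of-zeros}), the rows just above a void are the explicit periodic strings of Proposition~\ref{above-voids}, and the $4\bZ$-parity facts from the proof of Proposition~\ref{exponentially-many} --- together with a finite case analysis of the \emph{Web-adapted Rule 30} rule at the width-$1$ ``pinch'' points encountered by the $2$, checking that the rule stays in its ``$N_1\le2$'' (pure \emph{Rule 30}) regime there and that a lone $2$ at a pinch re-emerges as a lone $2$. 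A secondary point needing care is ensuring the $2$ enters a wide void far enough from its walls; if a direct argument is awkward one can enlarge the chosen width ($100T$) and use only that a fixed fraction of the void's width separates the entry point from one wall, which still suffices since $\cE_T$ can be placed anywhere inside a $(2T+1)$-wide all-$0$ triangle.
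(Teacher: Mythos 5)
Your overall strategy matches the paper's (an exponential family of $4\bZ$-supported configurations, reduction to pure \textit{Rule 30} wherever the first level vanishes, and re-seeding of \textit{Rule 30} in each successively larger principal void), but the step you yourself flag as the main obstacle is a genuine gap, and your proposed resolution does not close it. Two separate facts are needed there, and the paper supplies both as explicit, checkable properties of \textit{Web-adapted Rule 30}. First, \emph{survival}: after the planted $2$ has spread into a wide \textit{Rule 30} cluster inside a void, the void closes back down to a width-$1$ pinch, and one must show that the squeezed cluster does not die out entirely. The paper's property is that if $1$s are confined to $4\bZ$, then any maximal interval of $0$s and $2$s containing at least one $2$ has a successor interval containing a $2$; your sketch never addresses survival at all, only what happens \emph{if} a lone $2$ is present at the pinch. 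Second, \emph{re-isolation with non-interference}: at the pinch the single surviving $2$ sits inside the row just above the next void, which (by Lemma~\ref{predecessor-of-zeros}) is a subword of $(110)^\infty$; one needs that the flanking $1$s of this pattern do not corrupt the nascent \textit{Rule 30} triangle. The paper's property is the exact identity that $(110)^\infty 112(110)^\infty$ and $0^\infty 2 0^\infty$ generate the same configuration from time $1$ onward --- strictly stronger than your ``a lone $2$ re-emerges as a lone $2$,'' which controls only one step. Making this applicable also requires arranging that the rows $J_j$ above the voids are non-degenerate (not all $0$), which the paper achieves by adjusting the last few bits of the $4\bZ$-supported configuration; your seeds do not guarantee this.

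A secondary structural difference: the paper does not place the $2$ at time $0$ in the middle of the initial interval. It takes the corollary's seeds to be the evolved configurations $\lambda_t$ at the time of the top interval of $W_i$ (the count stays exponential because one step of \add\ is injective on seeds) and puts $z$ at the center of that interval. This starts the $2$ already inside the void hierarchy, so the successor-interval chain it follows is by construction the one threading the single points at each $J_j$ and the voids $W_j$. With your time-$0$ placement you would additionally have to argue that the successor chain of the central interval of $0$s actually merges into the principal-void hierarchy, which is not automatic from Proposition~\ref{exponentially-many} as stated.
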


\begin{pf}
Let $k$ be an integer. Denote the interval directly above (and of the same
length as) the top interval of $W_i^{4k}$ by $J_i^{4k}$. Suppose first that
$\lambda_0$ is an arbitrary configuration that vanishes off $4\bZ$ on
$[0,4k-4]$, and then on $[4k-3,4k]$ is chosen to be either $0000$ or $0001$
so as to ensure that for all $i$ the configuration in $J_i^{4k}$ is a subword
of $(110)^\infty$. That this can be achieved follows from
Lemma~\ref{predecessor-of-zeros} and additivity.

Let $i$ be such that $4k\leq2^{\lfloor i/2\rfloor}-2$, and consider the
sequence of successor intervals of the top interval of $W_i$. Using
Proposition~\ref{exponentially-many} (and its proof), all such
successors are
nonempty, and the successor at the time of $J_j$ (for $j>i$) consists
of a
single point. Furthermore, this point is at most $4k$ away from the center
of $J_j$ (for odd $j$) or from the center of the left half of $J_j$
(for even
$j$). These points will form the starting points of the translated
copies of
$\cE_T$.

Our set of binary configurations is the set of all resulting $\lambda_{t}$,
where $t$ is the time of the top interval of $W_i$; we define $z$ to be the
center (say) of this interval. This gives an exponential family of
configurations because the map on $\{0,1\}^\bZ$ corresponding to one
step of
\add\ is injective when restricted to seeds. This is easily verified (see
Section~\ref{ethers} for more information).

To conclude, we need the following properties of \textit{Web-adapted Rule
30},
which are easily checked from its definition.
\begin{longlist}[(ii)]
%
\item[(i)] If $1$s are initially confined to sites in $4\bZ$, and if at
some time
a maximal interval of $0$s and $2$s contains at least one $2$,
so does its successor interval.
\item[(ii)] Initial states $(110)^\infty112(110)^\infty$ and $0^\infty
20^\infty$, with the 2 at the origin, result in the same
configuration on $\bZ\times[1,\infty)$.\quad\qed
\end{longlist}
\noqed\end{pf}

We remark that a variant of Corollary~\ref{rule-30-chaos} may be
proved in
which we allow any finite set of 0s, in addition to the one at location $z$,
to be changed to 2s. The only change in the conclusion is that $c_3$ now
depends on the seed. This follows from two additional observations. First,
in \add, from any given set of maximal intervals of $0$s in $\lambda
_0$, the
number of successors cannot increase over time, and must thus eventually
stabilize. Second, if two \textit{Web-adapted Rule 30} seeds agree on
$[a,b]$, and have $1$s in $a$ and $b$ but no $1$s outside $[a,b]$, then their
configurations agree on the forward cone of $[a,b]$.

We conclude this section with the following conjecture supported by
computer experiments.

%
\begin{conjecture}\label{4-free}
The exponential bound \eqref{exponential-bound} in
Theorem~\ref{no-percolation} holds when diagonal path is replaced by 4-free
path.
\end{conjecture}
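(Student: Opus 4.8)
Since this is a conjecture, the plan below is an attack I would attempt rather than a complete argument, together with the point at which I expect it to stall.

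The natural starting point is the wide-path argument of Theorem~\ref{no-percolation}, where the set of cells reachable from $(0,0)$ by wide paths is confined to a single evolving interval of $0$s, whose endpoints furnish a dual assignment and hence an i.i.d.\ fair sequence that terminates (the interval ``dies'') after a geometric number of steps. For $4$-free paths this breaks: because a point $(x,t)$ on a $4$-free path is allowed up to three $1$s in the neighbourhood $\{(x\pm1,t),(x\pm1,t-1),(x,t-1)\}$, such a path can be a single $0$ flanked by $1$s, so its reachable set is not an interval but a union of intervals. I would therefore track the ``front'' $\Phi_t\subseteq\bZ$ of cells $x$ admitting a $4$-free path from $\bZ\times\{0\}$ to $(x,t)$, decomposed into maximal intervals, and try to show that a suitable complexity measure of $\Phi_t$ --- for instance total length plus number of components, suitably weighted --- performs a supermartingale-type process with a geometric tail. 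The deterministic inputs driving the case analysis are the two possibilities for the three cells directly below a $0$ (Lemma~\ref{predecessor-of-zeros} together with the parity forced by \add): either all three are $0$, so the point sits at the tip of a small $0$-triangle, or exactly two are $1$, in which case the $4$-free budget forces at most one of $(x\pm1,t)$ to be $1$. One then tries, as in the proof for wide paths, to attach a family of ``witness'' points to the front intervals, read off their states as independent fair bits via Proposition~\ref{dual-assignment}, and conclude that a union bound over the $4$-free paths (at most $3^{t}$ of them, or better, over a leftmost path in the spirit of the diagonal case) beats the $2^{-\ell}$ weight carried by each path of length $\ell$.

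The crucial sub-problem is the first case above, where a $4$-free path hugs the boundary of one of the self-similar voids of $\lambda$. Proposition~\ref{above-voids} shows that just above a void of width $2^{k}-1$ the configuration of $1$s is a segment of the rigid periodic word $(1\,\lsquare\,1\,\lsquare\,0\,\lsquare)^{\infty}$, so along such a corridor the $4$-free constraint is essentially automatic and costs almost nothing. To win, one must show that these corridors are too thin, and recur too infrequently along the boundary of $\Lambda$, to sustain an unbounded front: concretely, I would aim to prove that the number of $1$-configurations compatible with a $4$-free path segment of length $\ell$ is at most $(2-\e)^{\ell}$ while a dual assignment built from the front intervals still extracts at least $\ell$ independent fair bits. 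Equivalently, in the language of Theorems~\ref{no-percolation}--\ref{percolation}, one needs a quantitative form of the statement that the supercritical empty-path cluster loses its infinite component once the $4$-free restriction is imposed.

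The main obstacle --- and, I suspect, the reason the authors leave this as a conjecture ``supported by computer experiments'' --- is precisely the interaction of these $4$-free corridors with the multiscale void geometry of \add. Near large voids the local configuration is so structured that the naive count of compatible $1$-configurations is not obviously smaller than $2^{\ell}$, and the dual assignment degrades because the relevant cells lie close to the deterministic, sparse set $\Lambda$ rather than in a genuinely ``fresh'' random region; both the numerator (number of compatible configurations) and the denominator (number of extractable fair bits) are hurt at the same scales. Making the trade-off strictly favourable, uniformly over all scales $2^{k}$ of voids the path might follow, is the heart of the matter, and I do not see how to carry it out with the machinery of the excerpt alone. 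A genuinely new idea seems to be needed --- plausibly a renormalisation at scale $2^{k}$ exploiting the two-step self-similarity $\lambda_{t+2}(x)=\lambda_{t}(x-2)+\lambda_{t}(x)+\lambda_{t}(x+2)\bmod 2$ (the $m=1$ instance of Lemma~\ref{rescaling}), which would let one relate a $4$-free path shadowing a scale-$2^{k}$ void to a controlled path type for the coarse-grained dynamics.
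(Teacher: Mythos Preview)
The statement is a conjecture, and the paper offers no proof of it --- it is presented only as ``supported by computer experiments,'' and elsewhere the authors note that the conclusion of Theorem~\ref{showcase} for \textit{Extended 1 Or 3} would follow if this conjecture were established. You correctly recognize this and frame your write-up as a plan of attack rather than a proof.

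Since there is no argument in the paper to compare against, the relevant question is whether your proposed line is reasonable given the paper's machinery. It is: the two subcritical proofs the paper does give (leftmost diagonal path with corner-point dual assignment; evolving interval of $0$s with witness points for wide paths) are exactly the templates you propose to combine, and your diagnosis of why the wide-path argument breaks --- a $4$-free path may be a lone $0$ flanked by $1$s, so the reachable set fragments into several intervals --- is correct. Your identification of the void-boundary corridors (via Proposition~\ref{above-voids}) as the place where the counting-versus-randomness trade-off is most delicate is also on point, and is consistent with the paper's own heavy use of that periodic structure elsewhere. Your suggested renormalisation via the $m=1$ case of Lemma~\ref{rescaling} is a natural thing to try, though the paper gives no hint that the authors have pursued it. In short: there is nothing in the paper to grade your attempt against, and your honest assessment that a genuinely new idea is needed matches the authors' implicit position in leaving the statement open.
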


\section{Supercritical percolation}\label{supercritical}

In this section, we consider empty paths from random initial
conditions, and
in particular we prove the percolation result Theorem~\ref
{percolation}. The
results of this section are not needed for the proofs of
Theorems~\ref{showcase}--\ref{stability}, but they are of independent
interest and complement those of the previous section. We also consider
initial conditions where the randomness is restricted to the half-line
or a
finite seed. Here, many questions are open, but we establish some preliminary
results. The questions we consider are relevant to further understanding
certain web CA behavior.

\subsection{Percolation of empty paths}

As Figure~\ref{fluidempty} suggests, the set of points reachable by empty
paths emanating from an interval at time $0$ form an interval at each
subsequent time. With random initial conditions, this interval spreads
linearly provided it survives. Proving this is the key to
Theorem~\ref{percolation}.
%
%
\begin{figure}

\includegraphics{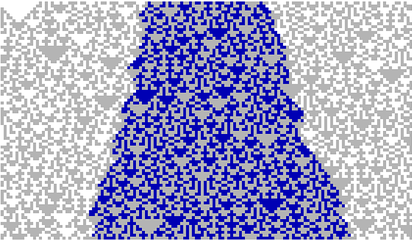}

\caption{All empty paths from an interval at time $0$ are highlighted
in blue.}
\label{fluidempty}
\end{figure}

Suppose $\lambda_0$ is given. The \df{rightward $Z$-path} from a space--time
point $(x,t)$ is an infinite sequence of points $(r_s,s)$, $s\geq t$ defined
as follows. Start with $r_t=x$. Inductively, let $r_{s+1}$ be the largest
integer $y$ in $(-\infty, r_s+1]$ for which $\lambda_{s+1}(y)$ is $0$;
or if
there is no such $y$ we take $r_u=-\infty$ for all $u>s$. Note that
$\lambda(r_s,s)=0$ for all $s>t$ for which $r_s$ is finite, but not
necessarily for $s=t$. Analogously, we define the \df{leftward $Z$-path}
$(\ell_s,s)$, $s\geq t$ by reversing the space coordinate in the definition.

%
\begin{lemma}[(Properties of $Z$-paths)]\label{empty-Z-connection}
Suppose $\lambda$ is \add\ from any initial configuration.
\begin{longlist}[(iii)]
%
\item[(i)] Suppose $\lambda(0,0)=0$ and let $(r_t,t)$, $t\ge0$ be the rightward
$Z$-path from $(0,0)$. If $x\leq r_t$ and $\lambda(x,t)=0$ then there is
a empty path from $(-\infty,0]\times\{0\}$ to $(x,t)$.
\item[(ii)]
Fix an interval $[a,b]$ with $a\le b$. Let $(\ell_t,t)$ be the leftward
$Z$-path from
$(a,0)$, and $(r_t,t)$ the rightward $Z$-path from $(b,0)$. Suppose
that $\ell_s\le r_s$ for
every $s\leq t$. Then for any $y\in[\ell_t, r_t]$ with $\lambda_t(y)=0$,
there is an empty path from $[a-2,b+2]\times\{0\}$ to $(y,t)$.
\item[(iii)] Under the assumptions of {\rm(ii)}, suppose also that
$\lambda_0(a)=\lambda_0(b)=0$. Then for any $y\in[\ell_t, r_t]$ with
$\lambda_t(y)=0$,
there is an empty path from $[a,b]\times\{0\}$ to $(y,t)$.
\item[(iv)] Conversely, if
there is an empty path from $[a,b]\times\{0\}$ to some $(y,t)$, then
$\ell_s\le r_s$ for all $s\leq t$, and $\ell_t\le y\le r_t$.
\end{longlist}
\end{lemma}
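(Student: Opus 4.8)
The plan is to prove all four parts by induction on the time coordinate, building or tracing empty paths one step at a time and exploiting two simple facts. First, a $Z$-path always sits on a cell in state $0$: for the rightward $Z$-path we have $\lambda_s(r_s)=0$ for every $s\ge 1$ by construction, and also for $s=0$ under the standing hypothesis of the relevant part, and symmetrically $\lambda_s(\ell_s)=0$ for the leftward path. Second, the parity of the \add\ rule means that if $\lambda_{s+1}(y)=0$ then the number of $1$s among $\lambda_s(y-1),\lambda_s(y),\lambda_s(y+1)$ is even, so at least one of these three cells is $0$.

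For part (i) I would show by induction on $t$ that every $(x,t)$ with $x\le r_t$ and $\lambda_t(x)=0$ is reached by an empty path from $(-\infty,0]\times\{0\}$; the base case $t=0$ is the single point $(x,0)$. For the step, given such an $(x,t+1)$ (so $x\le r_{t+1}\le r_t+1$), I seek a predecessor $x'\in\{x-1,x,x+1\}$ with $x'\le r_t$ and $\lambda_t(x')=0$, and then prepend the empty path to $(x',t)$ given by induction and append the step to $(x,t+1)$. If $x\le r_t-1$ all three candidates lie in $(-\infty,r_t]$ and the parity fact produces one in state $0$; if $x\in\{r_t,r_t+1\}$ then $x'=r_t\in\{x-1,x\}$ has $\lambda_t(r_t)=0$ and we are done.

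Parts (ii) and (iii) use the same induction, now tracking the interval $[\ell_s,r_s]$ and showing that every $(y,s)$ with $y\in[\ell_s,r_s]$ and $\lambda_s(y)=0$ is reached from $[a-2,b+2]\times\{0\}$ (resp.\ from $[a,b]\times\{0\}$); the hypothesis $\ell_s\le r_s$ for $s\le t$ keeps the intervals nonempty and uncrossed. In the step, $y\in[\ell_{s+1},r_{s+1}]\subseteq[\ell_s-1,r_s+1]$: if $y\in[\ell_s+1,r_s-1]$ the parity fact yields a predecessor inside $[\ell_s,r_s]$; if $y\in\{\ell_s-1,\ell_s\}$ I connect $(y,s+1)$ directly to $(\ell_s,s)$ (legal since $|y-\ell_s|\le 1$ and $\lambda_s(\ell_s)=0$), and symmetrically to $(r_s,s)$ when $y\in\{r_s,r_s+1\}$. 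This uses $\lambda_s(\ell_s)=\lambda_s(r_s)=0$, valid for all $s\ge 1$ and, in case (iii), also for $s=0$ thanks to $\lambda_0(a)=\lambda_0(b)=0$; so (iii) runs the generic step from the start. For case (ii) at $s=0$ this fails, so I would treat the first step separately: for $y\in[\ell_1,r_1]\subseteq[a-1,b+1]$ the parity fact gives $y'\in\{y-1,y,y+1\}\subseteq[a-2,b+2]$ with $\lambda_0(y')=0$, establishing the claim at time $1$, after which the $s\ge 1$ argument applies.

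Part (iv) is a direct induction along the given empty path $\pi=(x_0,0),(x_1,1),\ldots,(x_t,t)=(y,t)$ with $x_0\in[a,b]$: I claim $\ell_i\le x_i\le r_i$ for all $i\le t$. This holds at $i=0$; for the step, $x_{i+1}\le x_i+1\le r_i+1$ together with $\lambda_{i+1}(x_{i+1})=0$ makes $x_{i+1}$ a competitor in the definition of $r_{i+1}$, so $x_{i+1}\le r_{i+1}$, and $\ell_{i+1}\le x_{i+1}$ is symmetric; integrality of $x_i$ keeps $\ell_i,r_i$ finite along the path. Then $\ell_s\le x_s\le r_s$ gives $\ell_s\le r_s$ for every $s\le t$ and $\ell_t\le y\le r_t$. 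The main obstacle is the boundary bookkeeping in (i)--(iii): one must check that at the edge of the reachable interval the \add\ parity, combined with the fact that the $Z$-path occupies an empty cell, never leaves the inductive hypothesis without an applicable predecessor, and in particular that the two extra cells on each side in (ii) are exactly what compensates for the possibility $\lambda_0(a)=1$ or $\lambda_0(b)=1$.
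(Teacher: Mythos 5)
Your proposal is correct and follows essentially the same route as the paper: an induction whose step is the time-$1$ case, using the parity observation that a $0$ at time $s+1$ has a $0$ among its three predecessors, together with the boundary cases handled by stepping to $(\ell_s,s)$ or $(r_s,s)$ (which is where $\lambda_0(a)=\lambda_0(b)=0$ enters for (iii) and the widening to $[a-2,b+2]$ enters for (ii)), and with (iv) read off directly from the definitions. The paper merely states this more tersely, omitting (i) as "similar to (iii)".
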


\begin{pf}
We omit the proof of (i), as it is
similar to the proof of (iii), which proceeds by induction as follows.
The argument reduces
to verifying (iii) at time $t=1$.
Assume $\ell_1\le r_1$ and take $y\in[\ell_1,r_1]\subseteq[a-1,b+1]$
with $\lambda_1(y)=0$. Then there exists an $x\in\{y-1,y,y+1\}$ with
$\lambda_0(x)=0$.
It remains to verify that $x$ can be chosen to be in $[a,b]$.
If $y\in[a+1,b-1]$ this is clear; if $y\in\{b, b+1\}$, we may take $x=b$
and if $y\in\{a, a+1\}$ we may take $x=a$.

The above argument also proves (ii): we verify the claim at time $t=1$ and
then use (iii). The last claim (iv) is an easy consequence of
definitions of
empty and $Z$-paths.
\end{pf}

The key fact in establishing percolation of empty paths is that $r_t$ has
drift $1/4$. The proof is somewhat similar to that of nonpercolation for
wide paths, Theorem~\ref{no-percolation}.

%
%
\begin{lemma}[(Drift)] \label{r-drift}
Suppose that the initial configuration $\lambda_0$ is uniformly random on
$\bZ\setminus\{0\}$ and $\lambda_0(0)=0$. Let $(r_t,t)$, $t\ge0$ be the
rightward $Z$-path from $(0,0)$. For every $\varepsilon>0$, there
exists a
constant $c=c(\varepsilon)>0$ so that $\P(|r_t-t/4|>\varepsilon t)<e^{-ct}$.
\end{lemma}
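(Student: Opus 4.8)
The plan is to show that the right frontier has positive drift $\tfrac14$ via a renewal--reward representation driven by fresh coin flips, and then apply a Cram\'er bound; the skeleton parallels the wide-path proof of Theorem~\ref{no-percolation}. It is convenient first to replace $r_t$ by the right endpoint $q_t$ of the maximal $0$-interval $[L_t,R_t]$ of $\lambda_t$ containing $r_t$ (so $q_t=R_t\ge r_t$, with equality when $\lambda_t(r_t+1)=1$). A short analysis of the one-step dynamics shows that, once we know $\lambda_t(q_t)=0$, the \add\ rule determines the interval, the path position, and the configuration for a bounded stretch beyond both ends of the interval at time $t+1$ from the same data at time $t$ together with at most two cells of $\lambda_t$ just outside $[L_t,R_t]$; moreover the lag $q_t-r_t$ and the interval length are each bounded by the length of the interval at the most recent ``refresh'' (the interval strictly shrinks between refreshes), which one checks has a geometric tail uniformly in $t$. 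Hence it suffices to prove $\P(|q_t-t/4|>\e t)<e^{-ct}$, and we may work with the local state $\sigma_t=([L_t,R_t],\,r_t,\,\text{bounded outside stretch})$.

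The randomness enters exactly as for wide paths. As $R_t$ moves right, or $L_t$ moves left, the local state $\sigma_{t+1}$ requires new cells $\lambda_t(y)$ with $y$ just outside the interval, and each such value depends on $\lambda_0$ through the cell $y\pm t$ with coefficient $1$, because $\lambdasingle_t(\pm t)=1$ (Lemma~\ref{boundary-periodicity}(i)). Choosing these outermost cells as a dual assignment, Proposition~\ref{dual-assignment} shows each is conditionally a fair coin given the past, provided we reveal the cells of $\lambda_0$ in an order in which the frontier never revisits an already-assigned cell. Making that order explicit --- the analogue of the interval/witness-point construction in the wide-path proof --- realizes $(\sigma_t)_{t\ge0}$ as a deterministic function of an i.i.d.\ fair sequence $(Y_i)_{i\ge1}$.

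Next come the renewal structure and the drift computation. Declare $t$ a \emph{refresh} time when $[L_t,R_t]$ is a single cell; then at time $t+1$ the interval re-forms with a right extension $G$ and a left extension $G'$ that are independent, geometric with $\P(G\ge k)=2^{-k}$ and mean $1$, so refresh times are $O(1)$ apart with exponential tails. Between refreshes the interval shrinks by one on each side, except that when the path sits at the right end a single fresh coin decides (probability $\tfrac12$) whether the path instead hops one step right onto a newly formed interval, and the parity of the interval length governs whether a refresh or an interval ``death'' (both endpoints becoming occupied, forcing the path to drop left into the already-revealed trail) ends the cycle. Carrying out the resulting case analysis over one full renewal cycle, the expected net rightward displacement of $q$ equals exactly $\tfrac14$ times the expected cycle length --- the three contributions being the steady shrinkage, the rate-$\tfrac12$ hops, and the mean-one rightward jump at the following refresh. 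Consequently $q_t$ is, up to a bounded endpoint term, a renewal--reward process with i.i.d.\ cycle lengths and rewards, both exponentially integrable, with reward-to-length ratio $\tfrac14$.

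Finally, $\P(|q_t-t/4|>\e t)<e^{-ct}$ follows from a routine Chernoff argument: finiteness of exponential moments of the cycle lengths and rewards makes the number of completed cycles by time $t$, and the cumulative reward, deviate from their means by $\e t$ with only exponentially small probability, and the last incomplete cycle contributes an exponentially small error. The hard part is everything before this last step --- in particular, pinning down the reveal order so that the dual assignment provably never re-uses a cell, and handling the interval-death/drop transitions and their interaction with the hops so that they genuinely fit an i.i.d.\ renewal pattern rather than destroying it; the value $\tfrac14$ itself drops out only once this bookkeeping is in place.
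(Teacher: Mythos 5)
Your high-level strategy (renewal structure, fresh coins exposed via a dual assignment using $\lambdasingle_t(\pm t)=1$, then a Chernoff bound) is the right one and matches the paper in outline, but the core of the lemma is the identification of the correct renewal cycle and the computation that the drift is exactly $1/4$, and there your local description of the dynamics is wrong. Specifically, the claim that ``when the path sits at the right end a single fresh coin decides (probability $\tfrac12$) whether the path instead hops one step right'' fails: if $r_t=R_t$ is the right end of its maximal $0$-interval, then $\lambda_{t+1}(R_t+1)=\lambda_t(R_t)+\lambda_t(R_t+1)+\lambda_t(R_t+2)=0+1+\lambda_t(R_t+2)$, and the cell $\lambda_t(R_t+2)$ is usually \emph{already determined} by previously revealed information rather than being a fresh coin. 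For instance, immediately after the path drops from $R_t$ to $R_t-1$ (because $\lambda_t(R_t+2)=0$ forced $\lambda_{t+1}(R_t+1)=1$), one has $\lambda_{t+2}(R_t)=0+1+1=0$ deterministically, so the path hops back right with probability $1$, not $\tfrac12$. More generally, once one coin has been consulted the path enters a block of the form $01^{G-1}$, which propagates deterministically (shifting right by one and shortening by two each step) for $\lceil G/2\rceil-1$ steps with no new randomness consumed. A renewal--reward computation based on ``rate-$\tfrac12$ hops plus steady shrinkage'' therefore does not model the process, and the asserted ratio $\tfrac14$ is not established by your argument; you yourself defer exactly this bookkeeping to the end, but it is the substance of the proof.

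The paper's proof sidesteps the interval process entirely. From a refresh point $(x_0,t_0)$ (a $0$ at the edge of the explored region) it examines $(x_0+1,t_0+1),(x_0,t_0+1),(x_0-1,t_0+1),\ldots$ until the first $0$; the number $G$ of examined points is Geometric($\tfrac12$) and these are the \emph{only} witness points of the cycle, dual-assigned to $x+t$ (rightmost point) or $x-t$ (the rest), an assignment that provably never reuses an initial cell because the two frontiers $x+t$ and $x-t$ are monotone. The resulting pattern $01^{G-1}$ then evolves deterministically, and the next refresh point sits at displacement $\bigl(1-\lfloor G/2\rfloor,\;\lceil G/2\rceil\bigr)$, so the refresh points form a genuine i.i.d.\ random walk with step mean $(1/3,\,4/3)$, whence the speed $1/4$ and the large-deviation bound. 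If you want to salvage your interval-based picture you would have to track which cells adjacent to the interval are already determined, at which point you are forced back to essentially this exploration process.
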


\begin{pf}
We first describe an exploration process that determines the rightward
$Z$-path from the origin $(0,0)$. We designate $(0,0)$ to be the first
\df{refresh point}. Now we examine the states of the points
$(1,1),(0,1),(-1,1),\break (-2,1),\ldots,$ in this order, until we find the first
point with state $0$. Let $G$ be the number of points examined, and call
them \df{witness points}. Since the states of these witness points are
$01\cdots1$ (from left to right), certain states at the immediately
following time steps are determined. Specifically, the pattern
$01\cdots1$
is immediately followed by patterns of the same form, but with the length
decreasing by $2$ at each step and centered at the same location,
ending with
either $01$ or $0$ according to whether $G$ was even or odd. (See
Figure~\ref{empty-figure}.) We designate the location of the $0$ in this
last pattern to be the next refresh point. It is $(1-\lfloor G/2\rfloor,
\lceil G/2\rceil)$. Now iterate the process starting at the new refresh
point. Note that the rightward $Z$-path from $(0,0)$ consists precisely of
the $0$s at the left ends of the $01\cdots1$ patterns, including the refresh
points. Observe also that the $Z$-path is determined by the locations
of the
refresh points, and that these are determined by examination of the witness
points.

%
\begin{figure}

\includegraphics{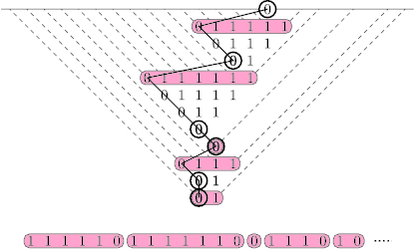}

\caption{The rightward $Z$-path (solid lines) from the origin,
together with
its refresh points (circled), and witness points (highlighted in red). A
dual assignment of the witness points to initial positions is indicated by
the dashed lines.
The states of the witness points in the order they are examined are
shown below.}
\label{empty-figure}
\end{figure}

Now consider the above exploration process for the initial
configuration that
is uniformly random on $\bZ\setminus\{0\}$ and $0$ at $0$. Let
$(X_i)_{i\geq1}$ be the sequence of states of the witness points, in the
order that they are examined by the exploration process. We claim that
$(X_i)_{i\geq1}$ is uniformly random. It suffices to check that
$(X_1,\ldots,X_n)$ is uniformly random. This follows from
Proposition~\ref{dual-assignment}, by the dual assignment in which a witness
point $(x,t)$ is assigned to $x+t$ if it is the rightmost witness point in
its $01\cdots1$ pattern, and otherwise to $x-t$. See
Figure~\ref{empty-figure}.

%
%
%
%
%

Let $G_i$ be the number of witness points examined in the row
immediately below the $i$th refresh point. Then $(G_i)$ are i.i.d.
Geometric($1/2$) random variables. Furthermore, the sequence of refresh
points is a random walk on $\bZ^2$ with steps $(1-\lfloor G_i/2\rfloor,
\lceil G_i/2\rceil)$. As $\mathbf{E}\lfloor G_i/2\rfloor=2/3$ and
$\mathbf{E}\lceil G_i/2\rceil=4/3$, each step has expectation vector $(1/3,
4/3)$. The proof is concluded by standard large deviation estimates.
\end{pf}

\begin{pf*}{Proof of Theorem~\ref{percolation}}
For $L$ to be chosen later, consider the leftward path $(\ell_t,t)$ started
at $(-L,0)$ and the rightward path $(r_t,t)$ started at $(0,L)$. Then,
by a
union bound and symmetry,
\[
\P (\ell_t<r_t\ \forall t ) \ge\P (
\ell_t\le-1\mbox{ and }r_t\ge1\ \forall t ) \ge1-2 \P
(r_t\le0\mbox{ for some $t$} ). %
\]
By Lemma~\ref{r-drift}, for $L$ large enough we have $\P(r_t\le
0\mbox{ for
some $t$})$ $\le1/3$. Call a site $x\in\bZ$ \df{good} if an
infinite empty
path starts at $(x,0)$. Thus, by Lemma~\ref{empty-Z-connection}(ii),
\[
\P \bigl( [-L-2, L+2] \mbox{ contains some good site} \bigr) \ge\P (
\ell_t<r_t\ \forall t )\ge1/3.
\]
Consequently, by translation-invariance, $\P(0\mbox{ is good})\ge
1/[3(2L+5)]$.
\end{pf*}

\subsection{Empty paths for half-lines and seeds}

How do empty paths behave when the initial configuration is a random seed?
This question is largely unresolved. (In contrast, the next section will
provide detailed answers for diagonal and wide paths.) A first step
would be
to understand the case of a uniformly random half-line, for which the
following conjecture is natural given Lemma~\ref{r-drift}.

%
\begin{conjecture}
Suppose the initial condition $\lambda_0$ is uniformly random on
$[1,\infty)$
and 0 elsewhere. Let $(r_t,t)$, $t\geq0$ be the rightward $Z$-path from
$(0,0)$. Then $r_t/t\to1/4$ as $t\to\infty$.
\end{conjecture}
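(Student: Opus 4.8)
\emph{Proof plan.} I would re-run, from the half-line seed, the exploration process used to prove Lemma~\ref{r-drift}: the rightward $Z$-path, together with its refresh points $p_i=(x_i,t_i)$ ($p_0=(0,0)$), is a deterministic function of the run lengths $G_i$ (the number of consecutive $1$s found searching leftward below $p_i$), via $p_{i+1}=(x_i+1-\lfloor G_i/2\rfloor,\,t_i+\lceil G_i/2\rceil)$ and the prescribed path between refresh points. Two elementary facts set things up. First, $m_i:=x_i+t_i$ is non-decreasing, since each step changes it by $1+(G_i\bmod 2)\ge 1$; hence $m_i\ge i$ and $m_i\to\infty$. Second, by duality (Lemma~\ref{duality}) the half-line configuration has $\lambda_t(x)\equiv 0$ whenever $x+t\le 0$, so the leftward search below $p_i$ must stop the moment it reaches column $x=-(t_i+1)$; thus $G_i\le m_i+3$ deterministically, and the search leaves the ``random region'' only if it runs for at least $m_i+2$ steps.

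Building on this, the aim is to couple $(G_i)_{i\ge1}$ with a sequence $(\tilde G_i)_{i\ge1}$ of i.i.d.\ Geometric$(1/2)$ variables so that $\P\bigl(G_i\ne\tilde G_i\mid\mathcal F_i\bigr)\le C\,2^{-cm_i}$, where $\mathcal F_i$ records the exploration up to $p_i$. Summing over $i$ using $m_i\ge i$ and applying Borel--Cantelli, $G_i=\tilde G_i$ for all large $i$ almost surely; the finitely many discrepant steps displace the refresh sequence by an almost surely finite amount, negligible after dividing by $t$. The drift computation already performed in the proof of Lemma~\ref{r-drift} then gives that the refresh points obey a strong law with direction $(1/3,4/3)$, and since $r_t$ fluctuates by at most $O(G_i)$ between refresh points one concludes $r_t/t\to 1/4$ almost surely.

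The crux — and the reason the statement is only conjectured — is the conditional fairness required to build this coupling: given $\mathcal F_i$ and that the first $g-1$ sites searched below $p_i$ are $1$s, one must show the $g$-th searched site is a fair coin \emph{while the search is still in the random region} (roughly $g\lesssim m_i$). In the full-line proof this is exactly what Proposition~\ref{dual-assignment} supplies, through the assignment sending a witness point $(x,s)$ to $x+s$ if it is the rightmost in its row and to $x-s$ otherwise. For the rightmost point the position $x+s=m_i+2\ge 1$ still lies in $[1,\infty)$, so that part survives verbatim; but the left-assignments $x-s$ fall at positions $\approx-\tfrac34 t$, deep in the deterministic half-line, where Proposition~\ref{dual-assignment} gives nothing. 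One would therefore need a substitute certificate for each non-rightmost witness point — e.g.\ a position $z\ge 1$ with $\lambdasingle_s(x-z)=1$, chosen among the many such $z$ furnished by the self-similar structure of $\lambdasingle$ (Lemmas~\ref{rescaling} and~\ref{boundary-periodicity}), and verified not to appear in the $\mathbb F_2$-linear expansion of any earlier witness state.

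I expect this last step to be the main obstacle: the natural candidates ($z=x$, $z=x\pm s$) each fail to avoid the already-used positions for certain parities of $s$, so a parity-robust choice, or a joint analysis of several witness states at once, seems to be needed — something genuinely beyond the dual-assignment techniques of Sections~\ref{dual} and~\ref{supercritical}. Everything else (the transient estimate, Borel--Cantelli, and the passage from the law of the refresh walk to the law of $r_t$) should be routine once the conditional-fairness input is available.
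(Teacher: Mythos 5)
This statement is a \emph{conjecture} in the paper: no proof is given there, and the closely related question about the growth rate of $r_t$ from a random half-line appears again in the list of open problems. So there is nothing in the paper to compare your argument against, and your submission is, by your own account, a plan with an acknowledged hole rather than a proof. Your diagnosis of where the hole sits is accurate and matches why the result remains open. The outer scaffolding is sound: the refresh-point recursion $p_{i+1}=(x_i+1-\lfloor G_i/2\rfloor,\,t_i+\lceil G_i/2\rceil)$, the monotonicity $m_{i+1}-m_i=1+(G_i\bmod 2)\ge 1$, and the deterministic cutoff $G_i\le m_i+3$ coming from $\lambda_t(x)=0$ for $x+t\le 0$ are all correct consequences of Lemma~\ref{duality} and the structure of the exploration in Lemma~\ref{r-drift}. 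But the entire plan is conditional on the claim that, given the exploration so far, each newly examined witness point is a fair coin while the search remains in the random region, and that is exactly what the dual-assignment machinery cannot deliver here: Proposition~\ref{dual-assignment} requires the image of the assignment to lie in the random set $K=[1,\infty)$, the rightmost witness point in each row can indeed be sent to $x+s=m_i+2\ge 1$, but every other witness point is sent to $x-s\approx-\tfrac34 t$, outside $K$. Your observation that the obvious substitutes fail (e.g.\ $\lambdasingle_s(-(s-1))=s\bmod 2$, so reusing right-assignments for the second point in a row breaks for odd $s$) is correct, and no parity-robust replacement is supplied. Without that input, nothing downstream (the coupling, Borel--Cantelli, the law of large numbers for the refresh walk) can be run.

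Two smaller points, should you pursue this. First, even granting the conditional-fairness certificate up to depth $\sim m_i$, the coupling argument needs more care than ``finitely many discrepant steps displace the walk by a finite amount'': a single discrepancy $G_i\ne\tilde G_i$ relocates the refresh point, and all subsequent increments are read off a different part of the configuration, so you must couple so that the increments \emph{after} the last discrepancy are themselves an i.i.d.\ Geometric sequence started from the (random, a.s.\ finite) displaced position --- plausible, but it is an additional construction, not a triviality. Second, the marginal fairness of each individual witness point with $x+s\ge 1$ is easy (take the single assignment $y=x+s$); the difficulty is entirely in the \emph{joint} conditional law across a row and across rows, where the $\mathbb{F}_2$-linear expansions of distinct witness states over $[1,\infty)$ can overlap in ways the triangular structure of a dual assignment is designed to rule out. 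Identifying that this is the crux --- and that it appears to require a genuinely new idea beyond Sections~\ref{dual} and~\ref{supercritical} --- is the correct assessment; the conjecture remains open.
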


We prove that a much weaker statement holds deterministically: for an initial
configuration supported in a half-line, empty paths penetrate
arbitrarily far
into its forward cone.

%
\begin{lemma}[(Unbounded penetration)]\label{zeros-get-in}
Assume that the initial condition $\lambda_0$ has no $1$s outside
$[1,\infty)$. Let $(r_t,t)$ be the rightward $Z$-path from $(0,0)$. Then
$\sup_t (r_t+t)=\infty$.
\end{lemma}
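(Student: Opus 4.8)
The plan is to argue by contradiction. Suppose $M:=\sup_t(r_t+t)<\infty$. If $\lambda_0\equiv 0$ then $r_t=t$ and the claim is trivial, so let $a:=\min\{x:\lambda_0(x)=1\}\ge 1$; by Lemma~\ref{duality} and Lemma~\ref{boundary-periodicity}(i) the leftmost $1$ of $\lambda_t$ sits at $a-t$ and $\lambda_t(x)=0$ for all $x<a-t$. Write $d_t:=r_t+t$. The defining recursion of the $Z$-path gives $r_{t+1}=r_t+1-k_{t+1}$, where $k_{t+1}:=(r_t+1)-r_{t+1}\ge 0$ counts the positions in $(r_{t+1},r_t+1]$, every one of which carries a $1$ in $\lambda_{t+1}$ by maximality of $r_{t+1}$; hence $d_{t+1}=d_t+2-k_{t+1}$. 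Since $\lambda_{t+1}$ vanishes strictly left of $a-(t+1)$, the cell $a-t-2$ is a $0$, so $r_{t+1}\ge\min(r_t+1,\,a-t-2)$, i.e.\ $d_{t+1}\ge\min(d_t+2,\,a-1)$. Therefore after finitely many steps $d_t\ge a-1$, and it stays $\ge a-1$; from that time $t_1$ on, $r_t$ is confined to the strip $[a-t-1,\,M-t]$ and $0\le k_{t+1}\le M-a+3$.

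The heart of the matter is a ``jump'', i.e.\ a step with $k_s\ge2$. At such a time the maximal run of $1$s of $\lambda_s$ beginning at $r_s+1$ has some length $N\ge k_s=:n$ (possibly $\infty$), bordered on the left by the $0$ at $r_s$. I would verify by a short induction on time, using only the rule $\lambda_{t+1}(x)=\lambda_t(x-1)+\lambda_t(x)+\lambda_t(x+1)\ \mmod\ 2$, that this block of $1$s erodes by exactly one cell on each side per step while the $Z$-path rides the freshly created left boundary: $r_{s+j}=r_s+j$ and $k_{s+j}=0$ for $j=1,\dots,\lfloor N/2\rfloor$ (and for all $j$ if $N=\infty$, which already forces $d_{s+j}=d_s+2j\to\infty$, a contradiction). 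By light-speed for \add\ the cells relevant to this up to time $s+\lfloor N/2\rfloor$ depend only on that run and the single $0$ to its left, so nothing from further right interferes. Consequently $d_{s+\lfloor N/2\rfloor}=d_s+2\lfloor N/2\rfloor\ge d_{s-1}+2-n+(N-1)=d_{s-1}+1+(N-n)\ge d_{s-1}+1$: a jump together with the ensuing $\lfloor N/2\rfloor$ zero-steps forms a \emph{block} of $\lfloor N/2\rfloor+1$ consecutive steps across which $d$ strictly increases. Because $d$ is bounded and $d_s\ge a-1$, we get $2\lfloor N/2\rfloor\le M-a+1$, so every block has length at most $c:=1+\lfloor (M-a+1)/2\rfloor$.

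To conclude I would process the steps greedily from left to right: an index $m$ with $k_m\ge2$ opens a block of at most $c$ steps with net increase $\ge1$, while every remaining index $m$ (necessarily $k_m\le1$) is a lone step with net increase $2-k_m\ge1$. Blocks are disjoint (all $k$'s inside a block after its opening jump vanish), so on $[t_1,T]$ the steps split into at least $(T-t_1)/c$ such ``events'', each contributing at least $1$ to $d$; hence $d_T\ge d_{t_1}+(T-t_1)/c\to\infty$, contradicting $d_t\le M$. Thus $\sup_t(r_t+t)=\infty$. I expect the main obstacle to be the erosion/riding claim of the middle step: one must isolate the right invariant for the configuration in a neighbourhood of the $Z$-path (the $0$, the run of length $N-2j$, the $0$ on the other side) and check it is propagated by the CA rule, which is conceptually straightforward but requires a careful case analysis at the two moving boundaries of the eroding block.
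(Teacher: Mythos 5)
Your proof is correct, and it takes a genuinely different route from the paper's. The paper argues via \emph{ancestral paths}: every $0$ at time $t\ge 1$ has a $0$ among its three predecessors, hence an empty path back to time $0$; combining this with the eventual periodicity of the configuration on the strips $[-t,-t+m+1]\times\{t\}$ near the left edge of the light cone, and the observation that $\lambda$ cannot be identically $1$ on two adjacent leftward diagonals (nor all $1$ next to all $0$), one finds $0$s at bounded distance from the edge whose leftmost ancestral paths are forced to reach $(-\infty,0]\times\{0\}$, and then invokes Lemma~\ref{empty-Z-connection}. You instead analyze the $Z$-path directly: with $d_t=r_t+t$ one has $d_{t+1}=d_t+2-k_{t+1}$; a jump $k_s\ge 2$ is witnessed by a maximal run of $N\ge k_s$ ones abutting $(r_s,s)$ on the right, and the \add\ rule erodes the pattern $0\,1^{N-2j}\,0$ by one cell on each side per step while the path rides the fresh left boundary, so the jump is overcompensated within $\lfloor N/2\rfloor+1$ steps, a window whose length is bounded in terms of the hypothetical bound $M$ on $d$; the resulting renewal decomposition gives $d$ positive drift, contradicting boundedness. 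Your argument is more elementary and quantitative (it yields an explicit linear lower bound on $\sup_{s\le T}(r_s+s)$ under the contradiction hypothesis), at the cost of the local erosion verification, which does check out: the induction hypothesis that $\lambda_{s+j}$ reads $0\,1^{N-2j}\,0$ on $[r_s+j,\,r_s+N-j+1]$ is preserved whenever $N-2j\ge2$ and forces $\lambda_{s+j+1}(r_s+j+1)=0$, hence $r_{s+j+1}=r_s+j+1$. The paper's argument is softer and reuses machinery (ancestral paths, edge periodicity) appearing elsewhere. Two cosmetic points: the inequality $d_T\ge d_{t_1}+(T-t_1)/c$ should be read at block boundaries $T$, since $d$ is not monotone inside a block, and the final block may overhang $T$; neither affects the conclusion.
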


\begin{pf}
We first observe that for any initial configuration $\lambda_0$ of
\add, if
$(x,t)$ has state $0$ and $t\geq1$ then at least one of the three points
$(x,t-1),(x\pm1,t-1)$ has state $0$ also. Iterating this, we see that there
must be an empty path from $\bZ\times\{0\}$ to $(x,t)$. We call any such
path an \df{ancestral path} of $(x,t)$.

Now, under the conditions of the lemma, note that for any $m\geq0$, the
sequence of configurations on the intervals $[-t,-t+m+1]\times\{t\}$ is
periodic in $t$ starting from some time $t_p$ depending on $m$ and
$\lambda_0$. For $a\ge0$, define the leftward diagonal $D_a:=\{
(a-t,t)\dvtx t\geq
0\}$. Then $\lambda$ cannot be identically $1$ on two consecutive diagonals
$D_a$ and $D_{a+1}$, and also cannot be identically $1$ on $D_a$ and
identically $0$ on $D_{a+1}$. (Indeed, in either case we deduce that
$\lambda$ is also $1$ on $D_{a-1}$, leading to a contradiction by induction.)

Fix $m\ge0$. We will show that for some $t$ there an empty path from
$(-\infty,0]\times\{0\}$ to $\{(-t+m+1,t), (-t+m,t)\}$, which
suffices by
Lemma~\ref{empty-Z-connection}. To verify this claim, we may assume
that the
periodic orbit commences initially, that is, that $t_p=0$. There must
be a time
$t$ with either $\lambda_t(-t+m+1)=0$ or $\lambda_t(-t+m)=0$; by periodicity
there must be infinitely many such times. Now take the \textit{leftmost}
ancestral path
of one of these two points.
Suppose this path does not start on $(-\infty,0]\times\{0\}$. Then,
if $t$
is large enough, the path has a diagonal segment longer than the period of
the orbit; additionally, all states immediately to the left of such a segment
must be 1. By periodicity, we have, for some $a\in[0,m+1]$, infinite
diagonals $D_a$ and $D_{a+1}$ on which $\lambda$ is identically 1 and 0,
respectively. This is in contradiction with our observations above.
\end{pf}

We remark that the supremum in the above lemma cannot be replaced with a
limit; a counterexample is $\lambda_0\equiv1$ on $[2,\infty)$ and
$\lambda_{0}(1)=0$.

Returning to our earlier question on seeds, Figure~\ref{fluid} (top) shows
the set of all points on empty paths from $(-1,0)$, when $\lambda_0$
is a
random seed on $[0,25]$. We believe that for typical long seeds, the right
frontier of this set lags behind the right edge of the forward cone of the
seed by a nontrivial power of $t$ in the limit $t\to\infty$. This is a
natural guess, since the frontier has speed $1$ in the voids, but presumably
speed $1/4$ on the fractal set occupied by randomness. It appears plausible
that such a process is a driving force behind the evolution of some
exceptional seeds for web CA, including the examples in Figures \ref{webxor}
(bottom), \ref{mod-webxor} (bottom) and possibly \ref{web1or3}.
%
%
\begin{figure}

\includegraphics{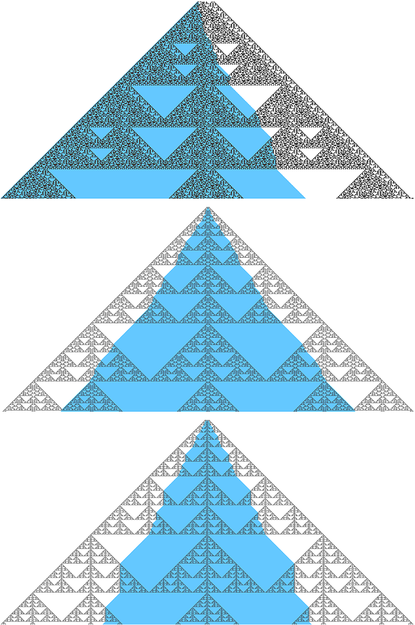}

\caption{The set of all points (blue) on empty paths starting from
certain initial points,
in \add\ started from three different seeds: two apparent power-law
cases, and a devil's staircase.}
\label{fluid}
\end{figure}

We believe that similar power law behavior holds for some specific small
seeds. One example is shown in Figure~\ref{fluid} (middle): the seed is
$1 000 \widehat{0} 000 1$, and empty paths from the middle
$\widehat{0}$
are highlighted. However, some seeds exhibit entirely different behavior.
The bottom picture shows the empty paths from the two $\widehat{0}$s
in the
seed $1 000 \widehat{0} 000 1 000 \widehat{0} 000 1$. Despite
apparent initial similarity to the previous case, here the rightmost point
$(r_t,t)$ reachable at time $t$ has $r_t/t$ bounded strictly between
$0$ and
$1$ at $t\to\infty$. Indeed, the rescaled path $2^{-n}\{(r_t,t)\dvtx
t\geq
0\}$
converges as $n\to\infty$ to a variant of the Cantor function or ``devil's
staircase.'' This may be proved by an inductive scheme.

As a preliminary step toward the power law behavior postulated above, we
prove a version in a simplified setting. Recall from
Section~\ref{prelim} that $\mu$ denotes the \textit{Xor} additive cellular
automaton rule.
Given a configuration $\mu\in\{0,1\}^{\bZ\times[0,\infty)}$, we define
the \df{$\chi$-path} starting
from a point $(x,0)$ to be the sequence of points $((x_t,t)\dvtx t\geq
0)$ given
by $x_0=x$ and
\[
x_{t+1}=\cases{ x_t, & \quad$\mu(x_t,t)=1$;
\vspace*{2pt}
\cr
x_t+1, & \quad $\mu(x_t,t)=0$. }
\]
In other words, the path makes a down step from a $1$, and a diagonal step
from a $0$. This is intended as a simplified model for a rightward $Z$-path,
which moves with speed $1$ in $0$s, but with a slower speed in a random
configuration.

%
%
\begin{prop}[(Power law for \textit{Xor})] \label{power-law}
Let $\mu$ be the \textit{Xor} CA with initial configuration
$\mu_0$ equal to $1$ on the two-point set
$\{-1,0\}$ and $0$ elsewhere. The $\chi$-path $((x_t,t))_{t\geq0}$ starting
from $(0,0)$ satisfies
\[
x_t = t-\Theta \bigl(t^{\log2/ \log3} \bigr) \qquad\mbox{as }t\to \infty.
\]
\end{prop}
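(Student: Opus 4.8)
The plan is to reduce the proposition to an arithmetical fact about Pascal's triangle modulo $2$; the simplification from $\lambda$ to $\mu$ is precisely what makes this possible, since the single-site pattern of \textit{Xor\/} is exactly Pascal's triangle mod~$2$. First I would record a closed form for $\mu$. By additivity (the \textit{Xor\/} analogue of Lemma~\ref{duality}), $\mu_t=\mu_t^{\{-1\}}+\mu_t^{\{0\}}\bmod 2$, where $\mu_t^{\{0\}}(x)=\binom{t}{(t+x)/2}\bmod 2$ for $x\equiv t\pmod 2$ and $\mu_t^{\{0\}}(x)=0$ otherwise (an immediate induction), and $\mu_t^{\{-1\}}(x)=\mu_t^{\{0\}}(x+1)$. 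Since these two patterns sit on opposite parity classes there is no cancellation, so for $-t-1\le x\le t$
\begin{equation*}
\mu_t(x)=\binom{t}{\lfloor (t+x+1)/2\rfloor}\bmod 2 ,
\end{equation*}
and $\mu_t(x)=0$ otherwise. In particular, writing $u:=t-x$ for the lag behind the light cone, the ``right profile'' is $\mu_t(t-u)=\binom{t}{\lfloor u/2\rfloor}\bmod 2$.

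In the coordinate $u_t:=t-x_t$ the $\chi$-path becomes the deterministic walk $u_0=0$, $u_{t+1}=u_t+\ind\bigl[\binom{t}{\lfloor u_t/2\rfloor}\text{ odd}\bigr]$, and the claim $x_t=t-\Theta(t^{\log2/\log3})$ is exactly $u_t=\Theta(t^{\log 2/\log3})$. Put $v_t:=\lfloor u_t/2\rfloor$. Then $v_t$ is non-decreasing with $u_t\in\{2v_t,2v_t+1\}$, it increases by $1$ at every second step at which the path meets a $1$, and between consecutive increments the path remains in a fixed column $v$ of Pascal's triangle mod~$2$, waiting to accumulate two more rows $t$ with $\binom tv$ odd, i.e.\ two more $t$ in $H_v:=\{t\ge 0:\binom tv\text{ odd}\}$, which is periodic in $t$ with period $2^{\lceil\log_2(v+1)\rceil}$ and density $2^{-s_2(v)}$, $s_2$ being the binary digit sum. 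Let $\sigma_V$ be the first $t$ with $v_t=V$ and $\Phi(V):=\sum_{v=0}^{V-1}2^{s_2(v)}$; since $\Phi(2^m)=\sum_k\binom mk2^k=3^m$ and $\Phi$ is increasing, $\Phi(V)=\Theta(V^{\log_2 3})$. The proposition then follows from the key estimate $\sigma_V=\Theta\bigl(\Phi(V)\bigr)$: indeed this gives $\sigma_V=\Theta(V^{\log_2 3})$, so for $t\in[\sigma_V,\sigma_{V+1})$ we have $V=\Theta(t^{\log 2/\log3})$ and hence $u_t=2v_t+O(1)=\Theta(t^{\log 2/\log3})$.

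The remaining, and main, task is to prove $\sigma_V=\Theta(\Phi(V))$. Write $\sigma_V=\sum_{v<V}W_v$ with $W_v:=\sigma_{v+1}-\sigma_v$ the dwell time in column $v$, i.e.\ the time after $\sigma_v$ needed to meet $H_v$ twice. A bound on individual $W_v$ is too lossy: depending on where the path enters column $v$, $W_v$ may be as small as $2$ or as large as twice the period of $H_v$, so the per-column estimates give only $\sigma_V=O(V^2)$ on one side and $\Omega(V\log V)$ on the other --- neither matching $V^{\log_2 3}$. The estimate must therefore be aggregated over the dyadic blocks of columns $[2^{m-1},2^m)$: all $H_v$ with $v$ in such a block share the period $2^m$, and while traversing the block the path meets exactly $2\cdot 2^{m-1}$ ones, so the time $\sigma_{2^m}-\sigma_{2^{m-1}}$ spent in it should be comparable to $\sum_{v\in[2^{m-1},2^m)}2^{s_2(v)}=2\cdot 3^{m-1}$. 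The hard part --- for both the upper bound (the path cannot move through a block much faster than the mean rates $2^{-s_2(v)}$ permit) and the lower bound (it cannot consistently land on the denser stretches of the $H_v$ and so move much faster either) --- is to show the aggregate dwell over a block does not deviate from $\sum_v 2^{s_2(v)}$ by more than a constant factor. I would attack this by a counting argument over a single full period $2^m$, controlling the joint distribution of the entry points $\sigma_v\bmod 2^m$ (note $\sigma_v\supseteq v-1$ bitwise, since $\sigma_v$ is a hit with mask $v-1$) to rule out systematic ``luck''. Everything else --- the closed form, the translation to the walk, and the passage from $\sigma_V=\Theta(\Phi(V))$ to the proposition --- is elementary; this period-level aggregation is the sole genuine obstacle.
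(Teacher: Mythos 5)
Your reduction is correct as far as it goes: the closed form $\mu_t(t-u)=\binom{t}{\lfloor u/2\rfloor}\bmod 2$, the reformulation as the walk $u_{t+1}=u_t+\ind[\binom{t}{\lfloor u_t/2\rfloor}\text{ odd}]$, the identity $\Phi(2^m)=3^m$, and the observation that the proposition is equivalent to $\sigma_V=\Theta(\Phi(V))$ are all accurate. But the proposal then stops exactly at the point where a proof is required: you state yourself that the per-column bounds give only $O(V^2)$ and a linear-type lower bound, that the estimate must be aggregated over dyadic blocks, and that this aggregation is ``the sole genuine obstacle'' --- and you offer only a one-sentence sketch (``a counting argument over a single full period, controlling the joint distribution of the entry points $\sigma_v\bmod 2^m$'') of how to overcome it. That is the entire mathematical content of the proposition, so as written this is a genuine gap, not a complete proof. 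It is not clear that the sketched attack would succeed without substantial further ideas: the entry times $\sigma_v$ are determined recursively by the path itself, so ``ruling out systematic luck'' is not a matter of averaging over independent phases, and one would have to exploit the arithmetic structure of the sets $H_v$ in an essential way.

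For comparison, the paper closes precisely this gap by a different decomposition: instead of tracking columns $v$ of Pascal's triangle, it decomposes space-time into the dyadic diagonal strips $S(k)=\cup_t R(k,t)$ of width $2^k$ along the right edge of the cone. The configuration on $S(k+1)\setminus S(k)$ is, per period $2^k$, the all-zero vector for half the period followed by a copy of the pattern on $S(k)$; since the path moves diagonally through zeros and can exit a strip only on its leftmost diagonal, the normalized exit times $e_k=E_k/2^{k-1}$ satisfy the exact recursion $e_{k+1}=\lfloor e_k/2\rfloor+e_k$, whence $E_k=\Theta(3^k)$ while the lag at exit is $2^k$. This self-similarity performs automatically the block aggregation you identified as the obstacle. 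If you want to complete your argument within your own framework, you would essentially need to prove a statement equivalent to this strip recursion; I would recommend doing that directly rather than attempting a distributional argument about the residues $\sigma_v\bmod 2^m$.
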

%

\begin{pf}
We first note some easy facts about $\mu$. Denote the interval of points
$R(k,t):=((i,t)\dvtx t-2^k<i\leq t)$ on the right side of the forward cone
of the origin. For any $k\geq1$, the state-vectors $(\mu(z)\dvtx z\in R(k,t))$
on these intervals form a periodic sequence in $t$ with period $2^{k-1}$.
Furthermore, the sequence of state-vectors on the intervals
$R(k+1,t)\setminus R(k,t)$ consists precisely of the all-$0$ vector repeated
$2^{k-1}$ times followed by the first $2^{k-1}$ state-vectors for $R(k,t)$
(all repeated with period $2^k$). See Figure~\ref{log-path} for an
illustration of the case $k=2$.
%
%
\begin{figure}[b]

\includegraphics{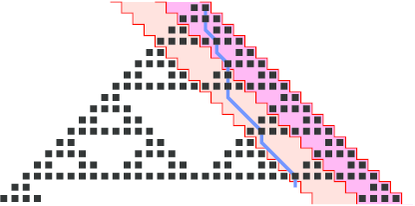}

\caption{The $\chi$-path from the origin in the Xor cellular automaton,
together with the construction used in its analysis. The strips
$S(2)$ (pink) and $S(3)\setminus S(2)$ (orange) are shaded.}
\label{log-path}
\end{figure}

Let $E_k:=\min\{t\geq0\dvtx(x_t,t)\notin R(k,t)\}$; this is the time
at which
the $\chi$-path leaves the diagonal strip $S(k):=\bigcup_t R(k,t)$.
This can
only happen at a down step, which can occur only at a $1$ of $\mu$ in the
leftmost diagonal of $S(k)$. It follows that $E_k$ is divisible by
$2^{k-1}$; write $E_k=2^{k-1} e_k$. For example (referring to
Figure~\ref{log-path}), we have $e_2=3$ and $e_3=4$.

In order to leave the strip $S(k+1)$, the path must first leave $S(k)$, and
then leave $S(k+1)\setminus S(k)$. By the above observations on periodicity,
and the fact that the path moves diagonally on $0$s, we deduce that
\[
e_{k+1} = \biggl\lfloor\frac{e_k}{2} \biggr\rfloor+e_k.
\]
The proof is complete using induction and obvious monotonicity
properties of the $\chi$-path.
\end{pf}

Among many unresolved questions, we do not know
whether an analogue of Proposition~\ref{power-law} holds when the
$\chi$-path
is defined similarly in terms of the \add\ CA $\lambda$ rather than
$\mu$.

\section{Additive dynamics from random seeds}\label{seeds}

Our goal in this section is to transfer the nonpercolation results for
infinite random initial configurations to random seeds. The proofs exploit
an intriguing interplay between randomness and periodicity in the
configuration started from a random seed.

%
\begin{lemma}[(Random edge-intervals)]\label{diagonal-randomness}
Assume $\lambda_0$ is a uniformly random binary seed on $[0,L]$.
For a fixed $t$, the state on the interval $[t,t+L]\times\{t\}$ is
uniformly random.
\end{lemma}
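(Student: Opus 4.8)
The plan is to construct an explicit dual assignment for the interval $[t,t+L]\times\{t\}$ and then invoke Proposition~\ref{dual-assignment}. The key geometric fact is Lemma~\ref{boundary-periodicity}(i): in $\lambdasingle$ we have $\lambdasingle_t(\pm t)=1$ while $\lambdasingle_t(x)=0$ for all $x$ with $|x|>t$. Equivalently, using the symmetry observation $\lambdasingle_t(x-y)=\lambda^{\{x\}}_t(y)$ recorded after Lemma~\ref{duality}, a single initial site at $y$ influences, at time $t$, only the interval $[y-t,y+t]$, and it definitely influences the right endpoint $y+t$ of that interval.

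First I would order the target interval $[t,t+L]\times\{t\}$ from \emph{right to left}, i.e. take the points $(t+L,t),(t+L-1,t),\ldots,(t,t)$ in that order, and define $F(x,t)=x-t$ for each. The image is then $\{L,L-1,\ldots,0\}=[0,L]$, which lies in the support set $K=[0,L]$ of the random seed, as required. To check the dual-assignment condition: for the $i$-th point $(x_i,t)$ (so $x_i=t+L-i+1$, $F(x_i,t)=L-i+1$) and the $j$-th point $(x_j,t)$, we need $\lambdasingle(x_j-F(x_i,t),t)=\lambdasingle\bigl((L-j+1)-(L-i+1)+\, \cdot\,\bigr)$— more precisely $\lambdasingle(x_j - F(x_i,t),t)$ with $x_j = t+L-j+1$, so the argument is $x_j-F(x_i,t)=t+(i-j)$. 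When $j=i$ this is $t$, and $\lambdasingle_t(t)=1$ by Lemma~\ref{boundary-periodicity}(i). When $j<i$ the argument is $t+(i-j)>t$, so $\lambdasingle_t(t+(i-j))=0$, again by Lemma~\ref{boundary-periodicity}(i). (There is no condition required when $j>i$.) Hence $F$ is a valid dual assignment.

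With the dual assignment in hand, Proposition~\ref{dual-assignment} immediately gives that $\lambda$ is uniformly random on $[t,t+L]\times\{t\}$, completing the proof. I do not anticipate a serious obstacle here: the only subtlety is getting the orientation right — one must order the interval so that each successive point ``reaches'' a fresh, previously-unused initial site at its far (right) end, which is exactly what the right-to-left ordering together with $F(x,t)=x-t$ achieves, mirroring the argument in Corollary~\ref{one-zeros}. In fact this lemma is essentially the special case $a=-t$, $k=L$, $x=t$ of Corollary~\ref{one-zeros} combined with the edge-values of $\lambdasingle$, and one could alternatively just cite that corollary after noting $\lambdasingle_t$ on $[-t,-t+L]$ (reading the relevant endpoint) has the needed form; but giving the direct dual assignment is cleaner and self-contained.
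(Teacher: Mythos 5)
Your direct dual-assignment argument is correct and is essentially the paper's proof: the paper simply cites Corollary~\ref{one-zeros} together with Lemma~\ref{boundary-periodicity}(i), and your right-to-left ordering with $F(x,t)=x-t$ is exactly the mirror image of the assignment used to prove that corollary. One small caveat on your closing remark: Corollary~\ref{one-zeros} with $a=-t$, $k=L$ would require $\lambdasingle_t$ to vanish on $[-t+1,-t+L]$, which fails inside the light cone; the literal application is with $a=t$, $k=L$, $x=-t$, giving the \emph{left} edge interval $[-t,-t+L]$, after which one uses the reflection symmetry of the random seed about $L/2$ (or a mirrored version of the corollary) to reach $[t,t+L]$ --- but your self-contained verification sidesteps this orientation issue entirely.
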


\begin{pf}
This is an immediate consequence of Lemma~\ref{boundary-periodicity}(i)
and Corollary~\ref{one-zeros}.
\end{pf}

%
\begin{lemma}[(Edge-periodicity)]
\label{diagonal-periodicity} For any $\lambda_0$ which is 0 on
$[L+1,\infty)$, and
any $k\ge1$, the sequence of edge configurations
$(\lambda(i,t)\dvtx i=t+L-k+1,\ldots, t+L)$ is periodic in
$t$, with period at most $2k$.
\end{lemma}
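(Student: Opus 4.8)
The plan is to use cancellative duality (Lemma~\ref{duality}) to express the right edge configuration of $\lambda$ as a fixed linear function (modulo $2$) of the right edge configuration of $\lambdasingle$, and then to quote the known periodicity of the latter from Lemma~\ref{boundary-periodicity}(iv).

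First I would fix $t$ and an index $i$ in the edge window, writing $i=t+L+j$ with $j\in\{-k+1,\dots,0\}$, and let $A=\{y:\lambda_0(y)=1\}$, so that $A\subseteq(-\infty,L]$. By Lemma~\ref{duality}, $\lambda(i,t)=\sum_{y\in A}\lambdasingle_t(i-y)\mmod 2$, a sum with finitely many nonzero terms since $\lambdasingle_t$ vanishes off $[-t,t]$. A term can be nonzero only if $i-y\le t$, i.e.\ $y\ge i-t=L+j\ge L-k+1$; together with $y\le L$ this shows that only the sites $y\in A\cap[L-k+1,L]$ contribute --- a fixed finite set of at most $k$ sites, independent of $t$. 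Writing $m=(L-y)+j$, the arguments occurring are $\lambdasingle_t(t+m)$ with $m\in\{-(k-1),\dots,k-1\}$, and since $\lambdasingle_t(t+m)=0$ for $m\ge 1$, only the values of $\lambdasingle$ on the right edge interval $[t-k+1,t]\times\{t\}$ actually enter.

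Then I would invoke Lemma~\ref{boundary-periodicity}(iv): the tuple $\bigl(\lambdasingle_t(t-k+1),\dots,\lambdasingle_t(t)\bigr)$ is periodic in $t$, from $t=0$ on, with period $2^p$ where $2^{p-1}<k\le 2^p$; in particular $2^p<2k$. For each fixed $i$ in the window, $\lambda(i,t)$ is a linear combination modulo $2$ of the entries of this tuple with coefficients $\ind[y\in A]$ that do not depend on $t$, hence periodic in $t$ with period dividing $2^p$. Taking a common period over the (at most $k$) indices $i$ in the window, the full edge configuration $(\lambda(i,t):i=t+L-k+1,\dots,t+L)$ is periodic in $t$ with period dividing $2^p$, hence at most $2^p<2k$, which gives the claim. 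I do not expect any substantive difficulty: the only things to check are the bookkeeping of index ranges --- confirming that exactly the values $\lambdasingle_t(t-k+1),\dots,\lambdasingle_t(t)$ arise and that the restriction to $A\cap[L-k+1,L]$ is valid uniformly in $t$ --- and the elementary inequality $2^p<2k$, which follows from $2^{p-1}<k$.
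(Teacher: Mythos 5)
Your proposal is correct and follows exactly the route of the paper, whose proof is the one-line citation of Lemma~\ref{boundary-periodicity}(iv) together with Lemma~\ref{duality}; you have simply filled in the index bookkeeping showing that each edge value of $\lambda$ is a $t$-independent linear combination (mod $2$) of the entries of the width-$k$ edge tuple of $\lambdasingle$, whose period $2^p<2k$ then transfers. No gaps.
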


\begin{pf}
This follows from Lemmas~\ref{boundary-periodicity}(iv) and~\ref
{duality}.
\end{pf}


Our first result establishes that, in subcritical cases, paths from the
initial state do not reach far into the forward cone of $[0,L]\times\{
0\}$.
This is illustrated in Figure~\ref{blocked-fluid-seed}, in which
$L=25$ and
all points on paths from $\bZ\times\{0\}$ are again depicted in blue (only
one layer of points outside the forward cone is colored blue, as all
such are
trivially reachable from $\bZ\times\{0\}$).
%
%
\begin{figure}

\includegraphics{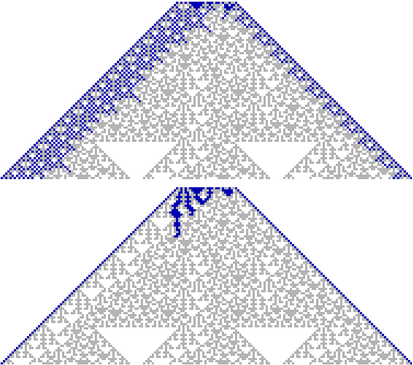}

\caption{Illustrations of the set of all points reached by paths starting
within an initial random seed.
Top: empty diagonal paths; bottom: wide paths.}
\label{blocked-fluid-seed}
\end{figure}

%
\begin{prop}[(Percolation into the cone)]
\label{no-outside-influence}
Suppose $\lambda_0$ is a uniformly random binary seed on $[0,L]$.
The probability that there is an empty diagonal path from $\bZ\times\{
0\}$
to the forward cone of $[0,L]\times\{\lfloor C\log L\rfloor\}$ goes to
0 as
$L\to\infty$. The same is true for wide paths. Here, $C$ is an absolute
constant.
\end{prop}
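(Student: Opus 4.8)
The plan is to reduce the claim to statements about the two ``edge strips'' of the configuration started from the seed, and there to re-run the leftmost-path argument behind Theorem~\ref{no-percolation}. Write $t_0=\lfloor C\log L\rfloor$ and let $\Gamma$ be the forward cone of $[0,L]\times\{t_0\}$. Since $\lambda_t$ is supported in $[-t,L+t]$ by Lemma~\ref{duality}, the cone $\Gamma$ lies inside the support cone of $[0,L]\times\{0\}$, separated from each of its two boundary diagonals by a ``collar'' of width $t_0$. A short case analysis of how an empty diagonal path from $\bZ\times\{0\}$ can reach $\Gamma$ (look backward from its first point in $\Gamma$ to the last point that is not strictly inside the support cone) shows that it must either (a) start at a site of $[0,L]\times\{0\}$ and survive for at least $t_0$ steps, or (b) re-enter the support cone across one of its two boundary diagonals and then traverse the corresponding collar to reach $\Gamma$. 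In case (b) the traversal forces at least $t_0/2$ diagonal steps toward the interior, and all of them have positions $(x,t)$ with $0\le x-t\le L$ (right collar) or $-L\le x+t\le 0$ (left collar) once $t_0\le L$; that is, the traversal is a diagonal path of length $\ge t_0/2$ lying inside the right edge strip $R=\{(x,t):0\le x-t\le L\}$ or its mirror image $R'$. So it suffices to bound the probabilities of (a) and (b).

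The randomness needed inside the strips is supplied by Lemmas~\ref{diagonal-randomness} and~\ref{diagonal-periodicity}: at each fixed time the row of $R$ is a uniformly random word of length $L$, and the family of these rows is periodic in $t$ with period at most $2L$. Using the periodicity, it is enough to rule out a collar-traversal whose entry point lies within the first $O(L)$ rows of the strip. From each such entry point I would re-run the leftmost-path estimate of Theorem~\ref{no-percolation}---pass to the leftmost empty path, introduce its corner points, and apply Proposition~\ref{dual-assignment}---to bound the chance that it achieves the required $\ge t_0/2$ steps of inward progress by a quantity exponentially small in $t_0$; a union bound over the $O(L)$ entry points and over the $L+1$ starting sites of case (a), where the same survival estimate applies, then gives total probability at most $O(L)\,e^{-\Omega(t_0)}$, which is $o(1)$ once $C$ is large, because $e^{-\Omega(C\log L)}$ is a negative power of $L$ beating the prefactor. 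The wide-path statement follows the same outline, with the leftmost-diagonal-path estimate replaced by its wide-path analogue in Theorem~\ref{no-percolation} (the interval process whose stopping time is dominated by the $11011$ waiting time), which again reduces to a dual assignment.

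The step that genuinely needs care is the assertion, used throughout, that the dual assignment certifying the subcritical estimate can be taken with image inside the random interval $[0,L]$ rather than all of $\bZ$. For a leftmost path confined to $R$, the ``arrival-from-the-right'' dual positions are of the form $x-s\in[0,L]$ and are harmless, whereas the ``arrival-from-the-left'' positions $x+s$ a priori overshoot $L$. I would resolve this using the collar geometry together with the periodicity: after shifting the traversal to start within the first $O(L)$ rows, the left-arrival points sit close to the right boundary $x-t=L$ and at small time, so that $x+s$ in fact stays in $[0,L]$; alternatively one reorganizes the count so that a factor $1/2$ is charged only to the $\ge t_0/2$ inward (left) steps, via a dual assignment $F(x,s)=x-s$ with corner points, all landing in $[0,L]$. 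Making this confinement precise, and checking its compatibility with the corner-point bookkeeping inherited from Theorem~\ref{no-percolation}, is the main obstacle.
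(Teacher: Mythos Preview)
Your overall architecture—a shielding argument that combines edge-strip randomness (Lemma~\ref{diagonal-randomness}), edge periodicity (Lemma~\ref{diagonal-periodicity}), and a union bound over $O(L)$ effective cases—is the paper's. The difficulty is in the execution: you re-open the proof of Theorem~\ref{no-percolation} and try to run its dual assignment directly against the seed, which creates the confinement problem you correctly flag. Neither proposed workaround closes it. For the first, a point $(x,s)$ in the right strip has $x-s\in[0,L]$, so $x+s=(x-s)+2s$, which exceeds $L$ as soon as $s>L/2$; shifting by the period $\le 2L$ does not keep $s$ small enough. For the second, charging $\tfrac12$ only to inward steps via $F(x,s)=x-s$ gives at most $(1/2)^{j}$ for a path with $j$ inward steps, but you must still sum over all paths, and the number with $j$ inward steps among $n$ total is $\binom{n}{j}$; with $n$ unbounded the sum diverges, and the corner-point device from the leftmost argument does not rescue it once the outward-step assignments have been discarded.

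The paper sidesteps the whole issue by applying Theorem~\ref{no-percolation} as a \emph{black box}, not at time $0$ but at the top of a short triangle. For $k$ with $2k+1\le L$, call $(x,t)$ \emph{bad} if an empty diagonal (resp.\ wide) path reaches it from $\bZ\times\{t-k\}$. This event is measurable with respect to $\lambda_{t-k}$ restricted to $[x-k,x+k]$, since every state in the backward light cone of $(x,t)$ to depth $k$ is a function of that interval alone. Hence whenever that length-$(2k{+}1)$ interval is uniformly random—trivially when it lies in $[0,L]\times\{0\}$, and by Lemma~\ref{diagonal-randomness} when it lies in the diagonal edge strip—Theorem~\ref{no-percolation} gives $\P(\text{bad})\le e^{-ck}$ with no confinement question at all. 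One then builds a shielding set $S$ of such points through which every path into the target cone must pass, reduces the two infinite diagonal arms of $S$ to $O(k)$ distinct cases via Lemma~\ref{diagonal-periodicity}, takes a union bound over $L+O(k)$ points, and sets $k=\lfloor C'\log L\rfloor$.
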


\begin{pf}Let $k$ be a positive integer to be chosen later satisfying $2k+1<L$.
Call a space--time point $(x,t)$ \df{bad} if there exists an empty diagonal
path from $\bZ\times\{t-k\}$ to $(x,t)$. If the state on the interval
$I(x,t):=[x-k,x+k]\times\{t-k\}$ is uniformly random, then
Theorem~\ref{no-percolation} implies that $\P((x,t)\mbox{ is
bad})\le
\exp(-ck)$ for an absolute constant $c>0$.

%
\begin{figure}[b]

\includegraphics{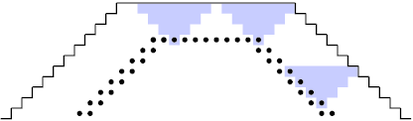}

\caption{An illustration of the proof of
Proposition \protect\ref{no-outside-influence}. The outline of the
forward cone of
$[0,L]\times\{0\}$ is shown by the solid line (here $L=16$). Points in the
set $S$ are shown as black discs. Any path from the top row to the region
below $S$ must pass through $S$. For each point in $S$, the top row of the
associated triangle (of size $k=3$) has a uniformly random state (three such
triangles are shaded).} \label{no-outside-influence-figure}
\end{figure}

%
%
%
%
%
%
%
We define an infinite set of points $S$ via
Figure~\ref{no-outside-influence-figure}. This set has the following
properties: (i) any path from $\bZ\times\{0\}$ to the forward cone of
$[0,L]\times\{2k\}$ must pass through a point in $S$; and (ii) for every
$(x,t)\in S$, the state on the interval $I(x,t)$ defined above is uniformly
random, either trivially or by Lemma~\ref{diagonal-randomness}.

We wish to bound the probability that $S$ contains a bad point by a union
bound. The set $S$ is infinite, but Lemma~\ref{diagonal-periodicity} implies
that the states of the relevant intervals $I(x,t)$ for $(x,t)$ in the
diagonal ``arms'' of $S$ repeat with period at most $2(2k+2)$. Thus, besides
the at most $L$ points on the top section of $S$, there are only $8(2k+2)$
distinct cases to consider. Hence,
\[
\P(\mbox{$S$ contains a bad point})\le(L+16k+16) e^{-ck}.
\]
The proof is completed by taking $k=\lfloor C'\log L\rfloor$ for a suitably
large $C'$ (the argument for wide paths is identical).
\end{pf}

Similarly, we next show that to each void of $\lambda^\bullet$ there
corresponds a periodic strip that blocks diagonal and wide paths. Fix a void
$V$ of $\lambda^\bullet$, and an integer $L\ge0$. Define the \df{perturbed
void} $V$ to be the triangular region
\[
W_L(V)=V\cap\bigl(V+(L,0)\bigr).
\]
See Figure~\ref{perturbed-void-figure} for an example. Note that
$W_L(V)=\varnothing$ unless the width of $V$ exceeds~$L$. Further, fix an
integer $m\ge1$, assume that the top interval of $W_L(V)$ is
$[a,b]\times
\{t\}$, and define the following interval above $W_L(V)$:
\[
J_{L,m}(V)=\bigl[a-2^m, b+2^m\bigr]
\times{t-2^m}.
\]
[We set $J_{L,m}(V)=\varnothing$ when $W_L(V)=\varnothing$.]
%
%
\begin{figure}[b]

\includegraphics{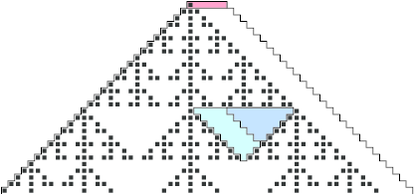}

\caption{Perturbed void (dark blue), with $L=5$, of the void with top
interval $[1,15]\times\{16\}$.
The perturbed void is filled with $0$s for any seed included in the
interval of six red
points. The forward cone of this interval is outlined.}
\label{perturbed-void-figure}
\end{figure}

%
\begin{lemma}[(Periodic and random intervals above voids)]\label
{above-voids-general}
Suppose the initial configuration $\lambda_0$ vanishes outside $[0,L]$.
Let $m$ be a nonnegative integer. Let
$V$ be a void of $\lambda^\bullet$ of width at least $L$ and at least $2^m$.
%
%
\begin{longlist}[(iii)]
%
\item[(i)]$\lambda$ vanishes on $W_L(V)$.

%
\begin{figure}

\includegraphics{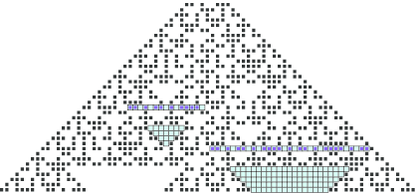}

\caption{Illustration of Lemma \protect\ref{above-voids-general}\textup{(ii)}
with the seed $110100111$
on $[0,8]$ and $m=2$ and the same two voids as in Figure \protect\ref
{above-voids-figure};
the repeating string is $A=010110101111$.}
\label{above-voids-general-figure}
\end{figure}
\item[(ii)] There exists a string $A$ of length $3\cdot2^m$, depending on
$m$ and $\lambda_0$ but not on~$V$, such that the configuration of
$\lambda$ on $J_{L,m}(V)$ is a subword of $A^\infty$.
\item[(iii)] Now suppose that $2^{m+1}\le L$ and that $\lambda_0$ is uniformly
random on $[0,L]$. Then every interval of length $2^m$ in $J_{L,m}(V)$ has
uniformly random state.
\end{longlist}
%
%
\end{lemma}

\begin{pf} Claim (i) is a simple consequence of Lemma~\ref{duality},
(ii) follows from Proposition~\ref{above-voids} and Lemma~\ref
{duality}, and
(iii) from Proposition~\ref{above-voids} and Corollary~\ref{one-zeros}.
(See Figure \ref{above-voids-general-figure}.)
\end{pf}

%
\begin{prop}[(Percolation into voids)]\label{no-crossing-into-void}
Assume the conditions
in Lem\-ma~\ref{above-voids-general}\textup{(iii)}. Let $\cross(m)$ be the event that
there exists a void $V$ for which there is either an empty diagonal or
a wide
path from $J_{L,m}(V)$ to $W_L(V)$. Then
\[
\P\bigl(\cross(m)\bigr)\le\exp\bigl(-c2^m\bigr)
\]
for some universal constant $c$.
\end{prop}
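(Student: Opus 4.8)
The plan is to bound, for a single void $V$, the probability of a crossing path from $J_{L,m}(V)$ to $W_L(V)$; to do this by cutting the path down to a piece of length $\Theta(2^m)$ confined to a narrow region and invoking the subcriticality estimate of Theorem~\ref{no-percolation} there; and finally to pass from one void to all voids by exploiting the fact that the relevant configuration is periodic with a period depending only on $m$ (Lemma~\ref{above-voids-general}(ii)).

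First I would fix a void $V$ of $\lambdasingle$ with $W_L(V)\ne\emptyset$ (so its width exceeds $L\ge 2^{m+1}$), writing the top interval of $W_L(V)$ as $[a,b]\times\{t\}$ and $J_{L,m}(V)=[a-2^m,b+2^m]\times\{t-2^m\}$. Since $W_L(V)$ is a downward triangle it can be entered only through its top interval, so any crossing path visits a point $(z,t)$ with $z\in[a,b]$ and lies in the backward light cone of $(z,t)$; as $\lambda$ has light speed $1$, the existence of such a path is already a deterministic function of $\lambda$ on $J_{L,m}(V)$, which is precisely that backward cone intersected with the row at time $t-2^m$. Next I would pick $h=\Theta(2^m)$ with $2h+1\le 2^m$. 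A crossing path reaches the row at time $\tau:=t-2^m+h$ at a point $(y,\tau)$ in the backward cone of $(z,t)$, and its restriction to times $[t-2^m,\tau]$ is an empty diagonal (or wide) path of length $h$ from $J_{L,m}(V)$ to $(y,\tau)$, confined to the backward cone of $(y,\tau)$; that cone meets the row $\{t-2^m\}$ in an interval of $2h+1\le 2^m$ cells lying inside $J_{L,m}(V)$, on which $\lambda$ is uniformly random by Lemma~\ref{above-voids-general}(iii). Inspecting the proof of Theorem~\ref{no-percolation}, one sees that the dual assignments it builds for a path (together with its corner points, in the diagonal case) use only initial cells inside the relevant backward light cone; the very same argument, applied to the \add\ evolution started from an arbitrary configuration that is uniformly random on a finite interval $I$ and zero outside, shows that the probability of an empty diagonal or wide path from $I\times\{0\}$ that descends $h$ further rows while staying in the backward cone of its endpoint is at most $e^{-c_0h}$ for an absolute $c_0>0$. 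Applying this with $I$ the above $(2h+1)$-cell interval, the crossing-path event for $V$ is contained in the union over admissible endpoints $(y,\tau)$ of events of probability at most $e^{-c_0h}$.

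It remains to sum over $y$ and over all $V$, and here periodicity is essential, since $W_L(V)$ (hence the set of admissible $y$) can be wide and there are infinitely many voids. By Lemma~\ref{above-voids-general}(ii), for a fixed seed the configuration of $\lambda$ on $J_{L,m}(V)$ is a window of a single periodic word $A^\infty$ of period $3\cdot2^m$ that depends on $m$ and the seed but not on $V$; since $2h+1<3\cdot2^m$, each of the events above is a fixed function of a length-$(2h+1)$ window of $A^\infty$, i.e.\ is determined by $A$ together with the window's offset modulo $3\cdot2^m$. Therefore, as $(V,y)$ ranges over all admissible pairs, these events --- viewed as random variables in the seed --- take at most $3\cdot2^m$ distinct values, each of probability at most $e^{-c_0h}$; a union bound over them gives
$$\P(\cross(m))\le 3\cdot2^m\,e^{-c_0h}\le e^{-c2^m}.$$

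I expect the main obstacle to be the claim that the argument of Theorem~\ref{no-percolation} localizes to the backward light cone. For empty diagonal paths this is essentially immediate from the explicit dual assignment. For wide paths it is more delicate: the interval process $I_s$ and its witness points (with witness points at process time $s$ assigned to cells $x\pm s$) must be shown to stay inside the $(2h+1)$-cell window, and the maximal zero-run $I_0$ can violate this near the boundary of the window. The fix is to observe that $I_0$ has length at least $2^m$ only when the periodic word $A^\infty$ contains a run of $\ge 2^m$ zeros, an event of probability at most $3\cdot2^m\,2^{-2^m}\le e^{-c2^m}$ by Lemma~\ref{above-voids-general}(iii), which can simply be absorbed into the error term; on the complementary event all $I_0$'s are short and the wide-path argument of Theorem~\ref{no-percolation} runs inside the window as required. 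A secondary point deserving care is that the dual-assignment bounds should be read off at the level of the seed $[0,L]$ (on which the initial configuration really is uniformly random and zero outside), or equivalently phrased via the observation that each crossing-path event is a deterministic function of a uniformly random $(2h+1)$-tuple of states.
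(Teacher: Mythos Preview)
Your overall strategy is correct and is exactly the paper's: the paper's proof simply says the result follows ``by a similar argument to the proof of Proposition~\ref{no-outside-influence}'', using Lemma~\ref{above-voids-general}(ii) to reduce the union bound to $3\cdot 2^m$ distinct cases and Lemma~\ref{above-voids-general}(iii) together with Theorem~\ref{no-percolation} to bound each. Your choice of an intermediate depth $h=\Theta(2^m)$ with $2h+1\le 2^m$ is precisely the analogue of the choice of $k$ (with $2k+1<L$) in Proposition~\ref{no-outside-influence}.

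However, your ``main obstacle'' paragraph and its fix are unnecessary, and the detour through re-running the dual assignment is the wrong way to invoke Theorem~\ref{no-percolation}. The cleaner route, which the paper uses implicitly in Proposition~\ref{no-outside-influence} (``If the state on the interval $I(x,t)$ is uniformly random, then Theorem~\ref{no-percolation} implies\ldots''), is this: the event that an empty diagonal or wide path of length $h$ reaches $(y,\tau)$ is a deterministic function of $\lambda_{t-2^m}$ restricted to $[y-h,y+h]$ (for wide paths one needs $[y-h-1,y+h+1]$, still of length $\le 2^m$ for suitable $h$), because both the path and the \add\ evolution that determines the states along it lie in the backward light cone. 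Since by Lemma~\ref{above-voids-general}(iii) this restriction is uniformly random, its law coincides with the restriction of a fully uniformly random configuration on $\bZ$, and Theorem~\ref{no-percolation} applies as a black box. There is no need to localize the witness/interval process $I_s$, to assume the configuration is zero outside $I$, or to control long runs of zeros in $A^\infty$; those concerns arise only because you are trying to re-prove Theorem~\ref{no-percolation} in situ rather than transfer its conclusion via measurability.
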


\begin{pf} Using Theorem~\ref{no-percolation}, and Lemma~\ref
{above-voids-general}(ii) and (iii), this follows by a similar argument
to the proof of
Proposition~\ref{no-outside-influence}. The key point is that by
Lemma~\ref{above-voids-general}(ii), only $3\cdot2^m$ distinct cases
need to
be considered in the union bound.
\end{pf}

\section{Replication and ethers in web cellular automata}\label{ca}
We can now prove Theorems \ref{showcase} and
\ref{stability} from the \hyperref[sec1]{Introduction}.

\begin{pf*}{Proof of Theorem~\ref{stability}}
This is an immediate consequence of Proposition~\ref{no-outside-influence}
and Lemma~\ref{compliance}.
\end{pf*}

\begin{pf*}{Proof of Theorem~\ref{showcase}}
Consider a uniformly random binary seed on $[0,L]$. In the context of
Proposition~\ref{no-crossing-into-void}, let $M$ be the smallest $m$ with
$2^{m+1}\le L$, for which $\cross(m)$ does not occur. If such an $m$
does not
exist, let $M=\infty$. By Proposition~\ref{no-crossing-into-void},
$M$ is
tight as $L\to\infty$.

If $M=\infty$, take $R_L=\infty$. Assume now that $M<\infty$. Then there
exists a string $A'$ of $0$s and $2$s of length of $3\cdot2^m$ so that the
top row of $W_L$ is a segment of $(A')^\infty$ for every void. This holds
because the first level configuration $\lambda$ is periodic with the required
period on a strip above $W_L$, by Lemma~\ref{above-voids-general},
while the
absence of relevant paths makes the top row also periodic by
Lemma~\ref{compliance}. Moreover, by the same results, the periodic pattern
is the same for all voids.

Consider the CA $\xi$ started with a periodic configuration $B^\infty
$, for
some string $B$ of length $\sigma$. The evolution is periodic in time after
some initial burn-in time interval: let $T_B$ be the first time $t$
such that
$\xi_t$ equals $\xi_s$ for some $s>t$. Let $\burnin(\sigma)=\max_B
T_B$. Our
random distance $R_L$ is $\burnin(3\cdot2^M)+1$, and the proof is
finished by
Lemma~\ref{light-speed} (which requires the $+1$).
\end{pf*}

As remarked earlier, Theorem~\ref{showcase} implies that the union of the
regions that are filled by a translate of the ether has density $1$ within
the forward cone of the seed. Therefore, on the event that $R_L<\infty
$ the
set of nonzero points has a rational density within the same forward cone.
We do not know whether the same holds for \emph{every} seed.

For an arbitrary web CA satisfying the conditions of Theorem~\ref{showcase}, \break
$\limsup_{L\to\infty} \P(R_L\ge r)$ decays at least as fast as a power
law in
$r$. This is easily seen from the above proof, using the fact that
$\burnin(\sigma)\leq2^\sigma$. In cases when the dynamics
restricted to
$0$s and $2$s is additive, including \webxor, \textit{Modified
Web-Xor} and
\pig, one can easily show that the decay is exponential. Identical remarks
apply to the temporal period of the ether $\eta_L$.

\section{Bounds on ether probabilities}\label{ethers}

In this section, we prove Theorem~\ref{ether-births}, and explain how explicit
lower bounds on ether probabilities are proved. We also indicate how some
ethers can be ruled out for certain rules.

For $m\ge0$, we call the string $A$ in Lemma~\ref{above-voids-general}(ii)
the \df{level-$2^m$ link} of the seed $\lambda_0$. (Note that the
choice of
$A$ is unique up to periodic shifts.) Fix an integer $k\ge1$ and a binary
string $A$. Consider \add\ with initial periodic configuration
$\lambda_0=A^\infty$. If there is no empty diagonal (resp., wide,
$\theta$-free) path from $\bZ\times\{0\}$ to $\bZ\times\{k-1\}$ in the
resulting configuration $\lambda$, then we say that $A$ is a
\df{blocker} to \df{depth} $k$ for diagonal (resp., wide, $\theta$-free)
paths. If $\lambda_{k-1}\not\equiv0$, then we say that $A$ is
\df{nondegenerate} to depth $k$.

Fix an $m\ge1$, and let $\lambda_0$ be a uniformly random seed on
$[0,L]$, with $L\ge2^{m+1}$.
For diagonal and wide paths,
Proposition~\ref{no-crossing-into-void} implies that
\[
\P\bigl(\mbox{the level-$2^m$ link of $\lambda_0$ is a
blocker to depth $2^m$}\bigr)\ge1-\exp\bigl(-c2^m\bigr),
\]
for some universal constant $c$.

Further, consider a web CA $\xi_t$ with an ether
$\eta\in\{0,2\}^{\bZ^2}$. The \df{signature} of $\eta$ is a
string $B$ such that, for some $t$,
$\eta(\cdot, t)$ equals a (spatial) translation of $B^\infty$, and
is the first in the lexicographic order among shortest such strings. Observe
that two ethers are equivalent if and only if they have the same signature.
We say that a binary string $A$ \df{produces}
$\eta$ with signature $B$ if the initial state $\xi_0=A^\infty$
makes $\xi_t$ equal to a translation of $B^\infty$ at some time $t$.

%
\begin{lemma}[(Blockers)]\label{blocker-ether}
Let $\xi_t$ be a diagonal-compliant (resp.: wide-\break compliant,
or $\theta$-free-compliant) web CA. Further, let $A$ be a string
that is a blocker to depth $2^m$ for diagonal (resp.: wide, or
$\theta$-free) paths
and produces an ether $\eta$. If a seed $\xi_0$ results in the
level-$2^m$ link $A$,
then $\xi$ is a replicator with ether $\eta$. If,
in addition, the CA $\xi_t$ has no spontaneous birth, then $\eta
\equiv0$.
\end{lemma}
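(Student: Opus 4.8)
The plan is to exhibit a finite thickness $r=r(L,m)$ so that every bounded component of $\bZ^2\setminus\Lambda(r)$ is filled with $\eta$; the last assertion will then follow because those components turn out to contain only $0$s. Write $\lambda_0:=\delta(\xi_0)$ for the first level of the seed — a binary seed, supported after a translation in $[0,L]$ — and let $\lambda$ be the resulting \add\ configuration, so $\mathrm{supp}(\lambda)\subseteq\Lambda(L)$ by Lemma~\ref{duality}. By definition of $\Lambda$, a bounded component of $\bZ^2\setminus\Lambda(r)$ is the $r$-erosion of a void of $\lambdasingle$, and is empty unless that void is wide; so for $r$ large enough (depending on $L$, $\burnin(3\cdot 2^m)$ and $2^m$) every such component lies inside $W_L(V)$, at temporal depth at least $\burnin(3\cdot 2^m)+1$ below the top of $W_L(V)$, for some void $V$ of $\lambdasingle$ of width at least $L+3\cdot 2^m$. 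It is thus enough to control $\xi$ on $W_L(V)$ for all such $V$. I treat the diagonal-compliant case; the wide- and $\theta$-free-compliant cases are identical with the matching path type throughout.

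Fix $V$ as above, let $[a,b]\times\{t\}$ be the top interval of $W_L(V)$, and put $s:=t-2^m$. By Lemma~\ref{above-voids-general}, $\lambda$ vanishes on $W_L(V)$ and its restriction to $J_{L,m}(V)=[a-2^m,b+2^m]\times\{s\}$ is a subword of $A^\infty$; fix a spatial translate $\xi^\natural$ of the web CA started from $A^\infty$ whose first level at time $0$ agrees with $\lambda_s$ on $[a-2^m,b+2^m]$. The key claim is that \emph{no empty diagonal path runs from $\bZ\times\{s\}$ into $W_L(V)$}. I would prove this in three steps: (a) any path can enter the inverted triangle $W_L(V)$ only through its top interval $[a,b]\times\{t\}$, since its slanted sides recede by one unit per time step while a path step moves by at most one, so crossing a side in a single step would need a jump of $2$; (b) a path ending on $[a,b]\times\{t\}$ lies, throughout the time interval $[s,t]$, in the backward unit-speed cone $C$ of $[a,b]\times\{t\}$ truncated at time $s$, whose widest cross-section, at time $s$, is precisely $J_{L,m}(V)$; and (c) as \add\ has unit propagation speed, $\lambda$ restricted to $C$ coincides with the corresponding (translated) piece of the \add\ configuration started from $A^\infty$. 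Hence an empty diagonal path from $\bZ\times\{s\}$ into $W_L(V)$ would restrict, on times $[s,t-1]$, to an empty diagonal path from time $0$ to time $2^m-1$ in the $A^\infty$-evolution — contradicting that $A$ is a blocker to depth $2^m$ for diagonal paths.

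Granting the claim, Lemma~\ref{compliance} applied from time $s$ shows that $\xi$ on $W_L(V)$ does not depend on $\xi_s$; and since the claim used $\lambda$ only on $C\cup W_L(V)$, where it equals the $A^\infty$-evolution, $\xi$ on $W_L(V)$ agrees with $\xi^\natural$ on the corresponding part of its own void region. Because $[a,b]$ has length at least $3\cdot 2^m$ — a period of the $A^\infty$-evolution — that evolution's first level dies within $2^m$ steps, after which $\xi^\natural$ becomes time-periodic in at most $\burnin(3\cdot 2^m)$ more steps and then equals a translate of $\eta$; this is exactly what the hypothesis ``$A$ produces $\eta$'' provides (cf.\ the burn-in argument in the proof of Theorem~\ref{showcase}). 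The depth bound in the choice of $r$ puts our component into this eventual region, so it is filled with $\eta$; as $V$ was arbitrary, $\xi$ is a replicator of thickness $r$ with ether $\eta$. If moreover the rule has no spontaneous birth, then since $A$ is binary the seed $A^\infty$ of $\xi^\natural$ is $2$-free, so $\xi^\natural$ never contains a $2$ and its eventual periodic pattern is both $\{0,1\}$- and $\{0,2\}$-valued; hence $\eta\equiv0$. (Alternatively: the blocker property also rules out an empty diagonal path from the initial $2$s of $\xi_0$, all lying in $[0,L]\times\{0\}$, into $W_L(V)$, so the no-spontaneous-birth clause of Lemma~\ref{compliance}, together with $\lambda\equiv 0$ there, forces $\xi\equiv0$ on $W_L(V)$.)

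The step I expect to be the main obstacle is the passage from ``no empty path into $W_L(V)$'' to ``$\xi=\xi^\natural$ on $W_L(V)$''. Lemma~\ref{compliance} as stated compares configurations that share a \emph{global} first level, whereas $\lambda$ (finite seed) and the first level of $A^\infty$ (periodic) coincide only on the finite region $C\cup W_L(V)$. The remedy is to exploit that all information flow reaching $W_L(V)$ is confined to that region, so the comparison can be made there — either via a localized version of the compliance argument, or by a direct downward induction on the web rule inside $C\cup W_L(V)$, using empty-compliance (which diagonal-compliance implies) to discard the range-$2$ dependence on non-adjacent first-level states, with the one-cell slack of Lemma~\ref{light-speed} accounting for the $+1$ in $r$. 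Verifying airtight that the hypothetical empty path is genuinely trapped in $C\cup W_L(V)$ — steps (a)--(c) above — is the technical heart of the proof.
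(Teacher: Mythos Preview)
Your argument is correct and follows essentially the same route as the paper's, which simply points back to the proof of Theorem~\ref{showcase} and to Lemma~\ref{compliance}. The localization concern you flag is legitimate, and your proposed remedy works; the paper's (implicit) way around it is slightly different and worth noting. Rather than comparing $\xi$ directly to $\xi^\natural$, the proof of Theorem~\ref{showcase} first applies Lemma~\ref{compliance} \emph{within} $\xi$ itself: compare $\xi_s$ to the configuration obtained by replacing every $2$ at time $s$ by a $0$. These two share the \emph{same global} first level $\lambda_s$, so compliance applies as stated, and the blocker property forces the top row of $W_L(V)$ to depend only on $\lambda_s$. Combined with Lemma~\ref{light-speed}, this makes the top row a subword of a periodic $\{0,2\}$-string $(A')^\infty$, identical for all voids. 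The remaining comparison to a periodic evolution then takes place entirely inside $W_L(V)$, where both first levels vanish, so no global-versus-local mismatch arises; the $+1$ from Lemma~\ref{light-speed} is exactly the extra unit in the thickness. Your direct route via $\xi^\natural$ reaches the same endpoint once light-speed is used to confine the dependence to $C\cup W_L(V)$, as you outline.
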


\begin{pf}
The first claim follows by the same arguments as in the proof
of Theorem~\ref{showcase}. The last claim follows by Lemma~\ref{compliance}.
\end{pf}

Denote by $\seeds_{[a,b]}$ the set of binary seeds that vanish outside
$[a,b]$, and by
$\seeds=\bigcup_{a\le b} \seeds_{[a,b]}$ the set of all binary seeds.
Let $g:\{0,1\}^{\bZ}\to\{0,1\}^{\bZ}$ be the map determined by one step
of the \textit{1 Or 3} rule (i.e., the map $\lambda_0\mapsto\lambda_1$).
It is well known (and easy to prove) that, for $a\le b$, the map
$g$ is injective from $\seeds_{[a,b]}$ to $\seeds_{[a-1, b+1]}$ and,
therefore,
the restriction $g|_{\seeds}$ is injective.
We say that a binary seed $\lambda_0$ \df{has a predecessor} if it is
in the image of $g|_{\seeds}$. More generally, $\lambda_0$ \df{has $k$
predecessors},
for $k\ge1$,
if it is in the image of the $k$th iteration $ (g|_\seeds
)^k$; in that case,
$ (g|_\seeds )^{-k}(\lambda_0)$ contains
a unique seed called the
\df{$k$th predecessor} of $\lambda_0$.
We denote by $\pred_k$ the set of all seeds that have $k$ predecessors.
The following lemma follows immediately from the properties of
$g|_{\seeds}$.

%
\begin{lemma}[(Predecessors of random seeds)]\label{predecessors-of-seeds}
Assume $\lambda_0$ is a uniformly random binary seed on $[0,L]$ and
that $1\le k\le L/2$. Then
$\P(\pred_k)=1/4^k$. Moreover, conditioned on $\pred_k$, the $k$th
predecessor of $\lambda_0$
is a uniform binary seed on $[k, L-k]$.
\end{lemma}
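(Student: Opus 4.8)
The plan is to reduce the statement to a purely combinatorial count, exploiting the injectivity of $g|_\seeds$ recorded just above. The whole point will be to show that, among seeds supported in $[0,L]$, the ones with $k$ predecessors are exactly the image $(g|_\seeds)^k\bigl(\seeds_{[k,L-k]}\bigr)$, and that $(g|_\seeds)^k$ maps $\seeds_{[k,L-k]}$ bijectively onto this image. Granted that, both assertions are immediate. Counting, $|\seeds_{[k,L-k]}|=2^{(L-k)-k+1}=2^{L-2k+1}$ while $|\seeds_{[0,L]}|=2^{L+1}$, so a uniform seed on $[0,L]$ lies in $\pred_k$ with probability $2^{L-2k+1}/2^{L+1}=4^{-k}$ (the hypothesis $k\le L/2$ keeps the interval $[k,L-k]$ legitimate, with the degenerate endpoint cases absorbed by the convention $[a,b]=\emptyset$ for $b<a$). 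And since the unconditional law is uniform on $\seeds_{[0,L]}$ and $(g|_\seeds)^k$ restricted to $\seeds_{[k,L-k]}$ is a bijection onto $\pred_k\cap\seeds_{[0,L]}$, conditioning on $\pred_k$ makes $\lambda_0$ uniform on that image, so its $k$-th predecessor $\bigl((g|_\seeds)^k\bigr)^{-1}(\lambda_0)$ is uniform on $\seeds_{[k,L-k]}$, i.e.\ a uniformly random binary seed on $[k,L-k]$.

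The content therefore lies in identifying the preimage set. First I would sharpen the support behaviour of $g$: the stated fact gives $g(\seeds_{[a,b]})\subseteq\seeds_{[a-1,b+1]}$, and I would upgrade this to the observation that if $\mu$ is a non-zero seed whose support has convex hull exactly $[a,b]$ (so $\mu(a)=\mu(b)=1$ and $\mu$ vanishes off $[a,b]$), then $g(\mu)(a-1)=\mu(a-2)+\mu(a-1)+\mu(a)\bmod 2=1$ since $\mu(a-2)=\mu(a-1)=0$ and $\mu(a)=1$, and likewise $g(\mu)(b+1)=1$; thus $g(\mu)$ is non-zero with support hull \emph{exactly} $[a-1,b+1]$, while the zero seed is fixed by $g$. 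Iterating, $(g|_\seeds)^k$ sends a seed with support hull $[a,b]$ to one with support hull $[a-k,b+k]$. Consequently, for $\lambda_0\in\seeds_{[0,L]}$: it lies in the image of $(g|_\seeds)^k$ precisely when its $k$-th predecessor $\nu$ (unique by injectivity of $g|_\seeds$) satisfies $\nu\in\seeds_{[k,L-k]}$ — for if $\nu$ has support hull $[a,b]$ then $\lambda_0$ has support hull $[a-k,b+k]\subseteq[0,L]$, forcing $a\ge k$ and $b\le L-k$, while conversely every $\nu\in\seeds_{[k,L-k]}$ has $(g|_\seeds)^k(\nu)\in\seeds_{[k-k,(L-k)+k]}=\seeds_{[0,L]}$. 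This yields
\[
\pred_k\cap\seeds_{[0,L]}=(g|_\seeds)^k\bigl(\seeds_{[k,L-k]}\bigr),
\]
and the restriction of an injection to a subset is a bijection onto its image, completing the reduction.

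I expect the only real fuss to be this support bookkeeping: checking that $g$ grows the convex hull of the support by exactly one site on each side rather than merely at most one, that this is preserved under iteration, and that the boundary cases ($\lambda_0\equiv 0$, or $k=L/2$ where $[k,L-k]$ collapses to a single site) still make the count come out to $4^{-k}$ on the nose. An alternative that sidesteps iterating the support computation is a short induction on $k$: condition on $\pred_1$, use the $k=1$ case to see that $g^{-1}(\lambda_0)$ is a uniformly random binary seed on $[1,L-1]$, translate to $[0,L-2]$ using translation-equivariance of $g$, apply the inductive hypothesis for $k-1$ (valid since $k-1\le(L-2)/2$), and compose, noting that $\lambda_0\in\pred_k$ iff $\lambda_0\in\pred_1$ and $g^{-1}(\lambda_0)\in\pred_{k-1}$ and that the $k$-th predecessor of $\lambda_0$ is the $(k-1)$-th predecessor of $g^{-1}(\lambda_0)$. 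The cost there is tracking conditional laws through the peeling, but no genuinely new difficulty arises either way.
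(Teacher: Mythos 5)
Your proof is correct, and it fills in exactly the argument the paper leaves implicit (the paper states only that the lemma ``follows immediately from the properties of $g|_{\seeds}$''): the key point, that $g$ enlarges the convex hull of the support by precisely one site on each side so that $\pred_k\cap\seeds_{[0,L]}=(g|_\seeds)^k\bigl(\seeds_{[k,L-k]}\bigr)$ bijectively, together with the count $2^{L-2k+1}/2^{L+1}=4^{-k}$, is the intended reasoning. No gaps.
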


%
\begin{lemma}[(Predecessors and links)]\label{predecessors-and-links} For
$m\ge0$, a seed $\lambda_0$
has $2^m$ predecessors if and only if its
level-$2^m$ link is 0.
\end{lemma}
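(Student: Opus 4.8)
The plan is to turn the question into a polynomial divisibility statement over $\mathbb{F}_2$. First I would make the level-$2^m$ link explicit as a convolution. Let $S\subseteq\bZ$ be the support of the seed $\lambda_0$, and set $P:=1\,\lsquare\,1\,\lsquare\,0\,\lsquare$, a word of length $3\cdot 2^m$ (so $\lsquare$ is a block of $2^m-1$ zeros). Recall from the proof of Lemma~\ref{above-voids-general}(ii) that the link is obtained by combining Proposition~\ref{above-voids} with duality (Lemma~\ref{duality}): if $[\alpha,\beta]\times\{\tau\}$ is the top row of a void $V$ of $\lambdasingle$, then $\lambdasingle$ on $[\alpha-2^m,\beta+2^m]\times\{\tau-2^m\}$ is a segment of $P^\infty$, and for $x$ in the interval $J_{L,m}(V)=[\alpha+L-2^m,\beta+2^m]$ and $z\in S\subseteq[0,L]$ one has $x-z\in[\alpha-2^m,\beta+2^m]$, exactly the range covered by Proposition~\ref{above-voids}; hence by Lemma~\ref{duality} the configuration $\lambda=\lambda^S$ on $J_{L,m}(V)$ equals a translate of the $3\cdot 2^m$-periodic sequence $\sum_{z\in S}P^\infty(\,\cdot\,-z)$ (reduced mod $2$). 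Reading off one period of it gives the link $A$, so
\[
A=0\quad\Longleftrightarrow\quad \sum_{z\in S}P^\infty(\,\cdot\,-z)\equiv 0 \pmod 2,
\]
and the right-hand condition does not depend on where $P^\infty$ is positioned, hence not on the particular void used.

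Second, the polynomial dictionary. Identify a binary seed $c$ with $p_c(X)=\sum_x c(x)X^x\in\mathbb{F}_2[X,X^{-1}]$; one step of \add\ multiplies $p_c$ by $1+X+X^2$ up to the unit $X^{-1}$, so $\lambda_0\in\seeds$ has $2^m$ predecessors if and only if $(1+X+X^2)^{2^m}\mid p_{\lambda_0}$ (the quotient, when it exists, is the polynomial of the $2^m$-th predecessor, which is unique since $\mathbb{F}_2[X,X^{-1}]$ is a unique factorization domain). By the Frobenius identity $(1+X+X^2)^{2^m}=1+X^{2^m}+X^{2^{m+1}}$, which is the polynomial of $\lambdasingle_{2^m}$ up to a unit, by Lemma~\ref{boundary-periodicity}(ii); this is consistent with the appearance of the convolution $\sum_{z\in S}P^\infty(\,\cdot\,-z)=\mathbf 1_S * P^\infty$ above.

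The core is then the algebraic claim that for every $q\in\mathbb{F}_2[X,X^{-1}]$,
\[
q\cdot P^\infty=0\quad\Longleftrightarrow\quad (1+X+X^2)^{2^m}\mid q .
\]
I would prove this by splitting into residue classes modulo $2^m$: writing $q^{(r)}(Y):=\sum_j q_{r+2^m j}Y^j$, the facts that $P^\infty$ vanishes off $2^m\bZ$ and restricts on $2^m\bZ$ (after reindexing) to $(110)^\infty$ show that $q\cdot P^\infty$ restricted to the class $r+2^m\bZ$ equals $q^{(r)}\cdot(110)^\infty$, so $q\cdot P^\infty=0$ iff $q^{(r)}\cdot(110)^\infty=0$ for all $r$. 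Since $1+X+X^2$ is irreducible over $\mathbb{F}_2$ and annihilates the nonzero periodic sequence $(110)^\infty$, the annihilator ideal of $(110)^\infty$ is exactly $(1+X+X^2)$, so the condition becomes $(1+X+X^2)\mid q^{(r)}$ for all $r$; substituting $Y=X^{2^m}$ into $q(X)=\sum_{r=0}^{2^m-1}X^r q^{(r)}(X^{2^m})$ and applying the Frobenius identity once more converts this into $(1+X+X^2)^{2^m}\mid q$. Chaining the three steps, $\lambda_0$ has $2^m$ predecessors $\iff (1+X+X^2)^{2^m}\mid p_{\lambda_0}\iff p_{\lambda_0}\cdot P^\infty=0\iff A=0$, which is the assertion.

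I expect the main obstacle to be the first step: verifying cleanly that the link really is the mod-$2$ sum of the dilated $(110)$-patterns and nothing more subtle. One must track the offset of $P^\infty$ at different voids, the normalization ``the segment begins and ends with a full $\lsquare$'' in Proposition~\ref{above-voids}, and the fact that $J_{L,m}(V)$ can be shorter than one full period $3\cdot 2^m$ for the narrowest admissible voids, so that to read off a complete period of $A$ one must pass to a wider void (which exists by Lemma~\ref{voids}). Everything downstream is routine manipulation in $\mathbb{F}_2[X,X^{-1}]$.
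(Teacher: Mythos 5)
Your proof is correct, but it takes a genuinely different route from the paper's. The paper treats the two directions separately and dynamically: the forward direction follows from duality and the structure of $\lambdasingle$ at times $2^n$, while for the converse it uses the fact that $\lambda_{2^n}$ restricted to $[0,L]$ reproduces the seed once $2^n>2L$, so that when the link vanishes the configuration at time $2^n-2^m$ is supported near the three separated copies of the seed, decouples by locality, and its middle piece is exhibited explicitly as the $2^m$-th predecessor. You instead reduce both conditions to a single algebraic invariant: having $2^m$ predecessors is divisibility of the seed polynomial by $(1+X+X^2)^{2^m}$ in $\mathbb{F}_2[X,X^{-1}]$, and vanishing of the link is annihilation of $P^\infty$ under convolution, which you convert to the same divisibility via the residue-class decomposition modulo $2^m$, the Frobenius identity $(1+X+X^2)^{2^m}=1+X^{2^m}+X^{2^{m+1}}$, and the observation that the annihilator ideal of the nonzero sequence $(110)^\infty$ is exactly the maximal ideal $(1+Y+Y^2)$. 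Your identification of the link with one period of $\mathbf{1}_S * P^\infty$ is precisely the computation underlying Lemma~\ref{above-voids-general}(ii), and you correctly flag and dispose of the offset and short-interval issues. What your approach buys is conceptual unity and portability: the $(1+X+X^2)$-adic valuation of the seed controls both sides at once, and the argument would transfer to other additive rules with irreducible symbol. What the paper's approach buys is constructiveness (it produces the predecessor as an explicit time-slice of the evolution) and economy (it reuses only the $\lambdasingle$ recursion and duality already in hand).
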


\begin{pf}
If $\lambda_0$ has $2^m$ predecessors, then its level-$2^m$ link is
$0$ by
Lemmas~\ref{boundary-periodicity}(ii) and~\ref{duality}.

Conversely, assume that $\lambda_0\in\seeds_{[0,L]}$ is given by a string
$S$ of length $L+1$. If $n$ is large enough so that $2^{n}>2L$ and
$2^{n}>2^{m+1}$, the configuration of $\lambda_{2^n}$ on $[0,L]$ is $S$,
again by Lemma~\ref{boundary-periodicity}(ii) and additivity. Recall that
the level-$2^m$ link is the same for all voids, on the left and on the right
of the vertical line $x=0$. If this link is $0$, the configuration on
$[0,L]$ at time $2^n-2^m$
provides a seed $\lambda_0'$, such that $g^{2^m}(\lambda_0')=\lambda_0$.
\end{pf}

%
\begin{lemma}[(Ether probabilities)]\label{positive-probability}
Suppose some seed $S_0\in\seeds_{[0,s-1]}$ is a
replicator with some ether $\eta$, and is such that for some $m\ge1$,
the level-$2^m$
link is a blocker to depth $2^m$. Let $\xi_0$ be a uniformly random binary
seed on $[0,L]$. Then
\[
\liminf_{L\to\infty}\P(\mbox{$\xi_0$ is a replicator
with ether $\eta$})\ge2^{-s- 2^{m+1}}.
\]
\end{lemma}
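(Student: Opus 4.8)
The plan is to reduce the statement to a claim about the level-$2^m$ link of a random seed, and then to estimate that probability using additivity of \add\ together with the known distribution of the predecessor count. Write $A$ for the level-$2^m$ link of $S_0$; by hypothesis $A$ is a blocker to depth $2^m$. The first thing I would check is that $A$ \emph{produces} $\eta$. Pick a void $V_0$ of $\lambdasingle$ of width larger than $\max(s-1,2^m)$, and large enough that the perturbed void $W_{s-1}(V_0)$ lies well inside a bounded component of $\bZ^2\setminus\Lambda(s-1)$ (and contains a full temporal period of $\eta$). By Lemma~\ref{above-voids-general}(ii) the first level of $\xi$ started from $S_0$ on $J_{s-1,m}(V_0)$ is a subword of $A^\infty$; by the blocker property, together with Lemma~\ref{light-speed} (light speed $1$) and Lemma~\ref{compliance}, no admissible path carries $0$-versus-$2$ information from $J_{s-1,m}(V_0)$ (or from outside the relevant light cone) into $W_{s-1}(V_0)$. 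Hence the configuration of $\xi$ on $W_{s-1}(V_0)$ coincides with that of the web CA started from $A^\infty$ at the corresponding location; since the former is a translate of $\eta$ (it is a bounded component of a replicator), the latter equals a translate of the signature of $\eta$ at some time, i.e.\ $A$ produces $\eta$. Consequently Lemma~\ref{blocker-ether} applies: \emph{any} seed whose level-$2^m$ link is a cyclic shift of $A$ yields a replicator with ether $\eta$, so it suffices to lower-bound the probability that $\xi_0$ has this property.

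For the lower bound I would use a disjoint-support construction. Let $\xi_0$ be uniformly random on $[0,L]$ with $L$ large, and let $E_L$ be the event that $\xi_0$ restricted to $[0,s-1]$ equals $S_0$ and that $\rho:=\xi_0|_{[s,L]}$, regarded as a binary seed on $[s,L]$, has $2^m$ predecessors. These two events involve disjoint coordinates, hence are independent; the first has probability $2^{-s}$, and the second, by Lemma~\ref{predecessors-of-seeds} applied on the interval $[s,L]$ with $k=2^m$ (valid once $2^{m+1}\le L-s$), has probability $4^{-2^m}=2^{-2^{m+1}}$. Therefore $\P(E_L)=2^{-s-2^{m+1}}$ for all sufficiently large $L$.

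It remains to check that $E_L$ forces $\xi_0$ to be a replicator with ether $\eta$. On $E_L$ we have $\xi_0=S_0\sqcup\rho$ with disjoint supports, so by additivity the configuration of \add\ from $\xi_0$ is the mod-$2$ sum of those from $S_0$ and from $\rho$. Fix a void $V_0$ of $\lambdasingle$ of width larger than $\max(L,2^m)$. Since $\rho$ has $2^m$ predecessors, Lemma~\ref{predecessors-and-links} gives that its level-$2^m$ link is $0$, so the configuration of \add\ from $\rho$ vanishes on $J_{L,m}(V_0)$; hence the configuration from $\xi_0$ there agrees with that from $S_0$, which by Lemma~\ref{above-voids-general}(ii) is a subword of a cyclic shift of $A^\infty$. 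Thus the level-$2^m$ link of $\xi_0$ is a cyclic shift of $A$, and by the first paragraph $\xi$ is a replicator with ether $\eta$. Combining, $\P(\xi_0\text{ is a replicator with ether }\eta)\ge\P(E_L)=2^{-s-2^{m+1}}$ for all large $L$, whence $\liminf_{L\to\infty}\P(\xi_0\text{ is a replicator with ether }\eta)\ge 2^{-s-2^{m+1}}$.

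I expect the main obstacle to be the first step — verifying that $A$ produces $\eta$ — since it requires reproducing, in miniature, the information-blocking argument behind Theorem~\ref{showcase}, and being careful that the chosen perturbed void is wide enough to display a full fundamental domain of $\eta$ and is shielded on all sides by the blocker and by light-speed $1$. The probabilistic core (independence and $\P(\pred_{2^m})=4^{-2^m}$) and the observation that adjoining a seed with $2^m$ predecessors leaves the level-$2^m$ link unchanged are routine consequences of Lemmas~\ref{predecessors-of-seeds}, \ref{predecessors-and-links} and \ref{above-voids-general}. I would also remark that, since the level-$2^m$ link read off a fixed void is an $\mathbb{F}_2$-linear function of the seed (by Lemma~\ref{duality}), one can replace the reserved window $[0,s-1]$ by the identity ``link$(\xi_0)=$link$(\xi_0\oplus S_0)\oplus$link$(S_0)$'' and obtain the stronger bound $2^{-2^{m+1}}$; but the disjoint-support version already gives exactly what is stated.
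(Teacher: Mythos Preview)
Your proposal is correct and follows essentially the same approach as the paper: split $\xi_0$ into its restriction to $[0,s-1]$ and to $[s,L]$, require the first to equal $S_0$ and the second to have $2^m$ predecessors, use independence and Lemma~\ref{predecessors-of-seeds} for the probability $2^{-s}\cdot 4^{-2^m}$, and use additivity (Lemma~\ref{duality}) with Lemma~\ref{predecessors-and-links} to see that the level-$2^m$ link is unchanged, whence Lemma~\ref{blocker-ether} gives the conclusion. The one place you are more careful than the paper is in explicitly verifying that the link $A$ of $S_0$ \emph{produces} $\eta$; the paper leaves this implicit (it follows since $S_0$ itself has link $A$, so Lemma~\ref{blocker-ether} forces $S_0$'s ether to be the one produced by $A$, which is given to be $\eta$), and your direct argument via a large void, Lemma~\ref{compliance}, and Lemma~\ref{light-speed} is a valid alternative route to the same fact.
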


\begin{pf}
Assume a seed $S_1$ with support in $[s,\infty)$ has $2^m$ predecessors.
Form a seed $S$ by adding the configurations of $S_0$ and $S_1$.
Then, by Lemmas~\ref{predecessors-and-links} and~\ref{duality}, $S$
has the same level-$2^m$ link as $S_0$ and, therefore,
by Lemma~\ref{blocker-ether}, is a replicator with
the same ether $\eta$. In the rest of the proof, we apply this fact
to random seeds.

Suppose now $\xi_0$ is a uniformly random seed in $\seeds_{[0,L]}$,
with $L$ large enough so that
$L-s\ge2^{m+1}$. Let $\xi_0'$ (resp., $\xi_0''$) be the
random seed that agrees with $\xi_0$ on $[0,s-1]$ (resp., $[s,L]$) and
vanishes elsewhere. Then
\begin{eqnarray*}
\P(\mbox{$S$ is a replicator with ether $\eta$}) &\ge&\P\bigl(\mbox{$
\xi_0'=S_0$, and $\xi_0''$
has $2^m$ predecessors}\bigr)
\\
&=&\P\bigl(\xi_0'=S_0\bigr)\cdot\P\bigl(
\mbox{$\xi_0''$ has $2^m$
predecessors}\bigr)
\\
&=&2^{-s}\cdot4^{-2^m}, %
\end{eqnarray*}
%
where the last equality follows from
Lemma~\ref{predecessors-of-seeds}.
\end{pf}

\begin{pf*}{Proof of Theorem~\ref{ether-births}}
This is immediate from Lemmas \ref{blocker-ether} and
\ref{positive-probability} and the proof of Theorem~\ref{showcase}.
\end{pf*}

Recall that \textit{Extended 1 Or 3} is not diagonal- or wide-compliant.
However, it is $4$-free compliant, and this allows us to prove
the following lower bounds.

%
\begin{table}
\caption{Some nonequivalent ethers that provably emerge for \textit
{Extended 1
or 3} from a long random seed with positive asymptotic probability. Each
ether is generated from the initial condition obtained by repeating its
signature indefinitely. Here, [$k$] stands for an interval of $k$ $0$s.
The last column is a rigorous lower bound for the $\liminf$ of the probability
in Theorem \protect\ref{ether-births}. The lower bounds sum to just
over $0.826$}\label{ex1or3-table}
\begin{tabular*}{\textwidth}{@{\extracolsep{\fill}}lcccc@{}}
\hline
\textbf{Ether}&\textbf{Temporal}&\textbf{Spatial}&
\textbf{Density}&\textbf{Lower}\\
\textbf{signature}&\textbf{period}&
\textbf{period}
& \multicolumn{1}{c}{\textbf{of} $\bolds{2}$\textbf{s}}
& \textbf{bound}\\
\hline
0 & 1 & \phantom{0}1 & 0 & 0.6061 \\
02 & 1 & \phantom{0}2 & $1/2$ & 0.0471 \\
0002 & 2 & \phantom{0}4 & $1/2$ & 0.0333 \\
{[7]}2 & 4 & \phantom{0}8 & $3/8$ & 0.0664 \\
{[5]}202 & 4 & \phantom{0}8 & $3/8$ & 0.0189 \\
{[15]}2 & 8 & 16 & $5/16$ & 0.0193 \\
{[13]}202 & 8 & 16 & $11/32$ & 0.0079 \\
{[11]}20002 & 8 & 16 & $5/16$ & 0.0024 \\
{[9]}2000202 & 8 & 16 & $3/8$ & 0.0085 \\
{[9]}2020202 & 8 & 16 & $3/8$ & 0.0006 \\
{[7]}200020202 & 8 & 16 & $13/32$ & 0.0045 \\
{[7]}202020202 & 8 & 16 & $7/16$ & 0.0105 \\
{[5]}2[5]20202 & 8 & 16 & $7/16$ & 0.0006 \\
\hline
\end{tabular*}
\end{table}

\begin{theorem}[(Replication and ether probabilities for \textit
{Extended 1
Or 3})]
\label{extended-1-or-3}
Let $\xi$ be the \textit{Extended 1 Or 3} web CA, started from a uniformly
random binary seed on $[0,L]$. Then
\[
\liminf_{L\to\infty} \P(\xi\mbox{ is a replicator})\ge0.826.
\]
Moreover, lower bounds on $ \liminf_{L\to\infty} \P(\xi\mbox{ is
a replicator
with ether $\eta$}) $ for certain ethers $\eta$ are as in
Table~\ref{ex1or3-table}.
\end{theorem}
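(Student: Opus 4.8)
The strategy mirrors the proofs of Theorems~\ref{showcase} and \ref{ether-births}, the crucial difference being that \add\ satisfies no subcriticality estimate for $4$-free paths (we only have Theorem~\ref{no-percolation} for diagonal and wide paths, and the $4$-free analogue is only Conjecture~\ref{4-free}). Consequently the plan is to replace the probabilistic crossing bound by a \emph{finite deterministic check}, which yields explicit numerical constants rather than probabilities tending to $1$. The entry point is that \textit{Extended 1 Or 3} is $4$-free-compliant, so Lemma~\ref{blocker-ether} applies with $\theta=4$: if the level-$2^m$ link $A$ of the seed is a non-degenerate blocker to depth $2^m$ for $4$-free paths and produces an ether $\eta$, then $\xi$ is a replicator with ether $\eta$. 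Hence the whole problem reduces to estimating, for a uniformly random binary seed on $[0,L]$, the probability that some level-$2^m$ link (for $m$ in a small range) is such a blocker, and to sorting these blockers by the ether they produce.

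The first ingredient is the limiting law of the level-$2^m$ link $A_m=A_m(\lambda_0)$. By Lemma~\ref{duality} and Proposition~\ref{above-voids}, on the interval above a sufficiently wide perturbed void the configuration of $\lambda$ is a modulo-$2$ superposition of translates of the period-$3\cdot 2^m$ string $(1\,\lsquare\,1\,\lsquare\,0\,\lsquare)^\infty$, one translate for each occupied seed site; as this string has period $3\cdot2^m$, the link depends on the seed only through the parities of the occupation counts in the $3\cdot2^m$ residue classes modulo $3\cdot2^m$. For $L$ large these parities are (asymptotically exactly) independent fair bits, so $A_m$ is, up to periodic shift, a uniformly random element of the $\mathbb{F}_2$-span $\cV_m$ of the cyclic shifts of $(1\,\lsquare\,1\,\lsquare\,0\,\lsquare)^\infty$ inside the period-$3\cdot2^m$ configurations. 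In particular $\P(A_m=0)=4^{-2^m}$, which by Lemmas~\ref{predecessors-and-links} and \ref{predecessors-of-seeds} matches $\P(\lambda_0\in\pred_{2^m})$ and forces $\dim_{\mathbb F_2}\cV_m=2^{m+1}$, so $|\cV_m|=2^{2^{m+1}}$. Moreover, since the level-$2^{m-1}$ link is obtained from the level-$2^m$ link by running \add\ forward $2^{m-1}$ steps, $A_{m_0}$ is a deterministic function of which all the coarser links $A_1,\dots,A_{m_0-1}$ are in turn deterministic functions; thus the joint law of $(A_1,\dots,A_{m_0})$ is the pushforward of the uniform law on $\cV_{m_0}$.

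The second ingredient is a purely finite computation. For each string $A\in\cV_{m_0}$: derive the coarser links; for each $m\le m_0$, run \add\ from the periodic state $A_m^\infty$ for $2^m$ steps and test whether any $4$-free path connects time $0$ to time $2^m-1$ (if none exists, $A_m$ is a blocker to depth $2^m$), checking also non-degeneracy; and when it is a blocker, run \textit{Extended 1 Or 3} from $A_m^\infty$ to read off the ether $\eta$ produced. That a link-$A_m$ blocker really forces replication simultaneously above \emph{every} void is exactly Lemma~\ref{blocker-ether}, whose proof (as for Theorem~\ref{showcase}) needs only Lemma~\ref{above-voids-general}(ii) together with the light-speed property of \add\ and of the web CA --- no probabilistic input once $A_m$ is known to be a blocker. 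Summing the uniform weights $|\cV_{m_0}|^{-1}$ over all $A\in\cV_{m_0}$ for which \emph{some} $A_m$ ($m\le m_0$) passes the test gives $\liminf_{L\to\infty}\P(\xi\text{ is a replicator})\ge 0.826$; restricting the sum to strings whose passing link produces a fixed ether $\eta$ gives the entries of Table~\ref{ex1or3-table}. (Alternatively, each ether entry can be obtained from Lemma~\ref{positive-probability} applied with $\theta=4$ to one explicit replicator seed $S_0\in\seeds_{[0,s-1]}$ with ether $\eta$ whose level-$2^m$ link is a $4$-free blocker to depth $2^m$, yielding $2^{-s-2^{m+1}}$.)

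The main obstacle is precisely the gap left by Conjecture~\ref{4-free}: without an exponential subcriticality bound for $4$-free paths one cannot let $m\to\infty$, so the replication probability is only bounded below by a constant strictly less than $1$, and the quality of that constant is limited by how large an $m_0$ (equivalently, how long a link, length $3\cdot 2^{m_0}$, and how large a set $\cV_{m_0}$, of size $2^{2^{m_0+1}}$) one is prepared to enumerate exhaustively. A secondary, bookkeeping, difficulty is justifying the limiting law of $A_m$ carefully --- verifying that the superposition-of-voids description genuinely makes $A_m$ a function of the seed through a bounded window, so that its distribution stabilises as $L\to\infty$, and that the events ``$A_m$ is a $4$-free blocker'' for different $m$ are correctly amalgamated via the fact that all $A_m$ are functions of $A_{m_0}$.
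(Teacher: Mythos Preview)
Your framework is largely correct --- the link $A_m$ is indeed uniform on an $\mathbb F_2$-subspace $\cV_m$ of dimension $2^{m+1}$, and Lemma~\ref{blocker-ether} with $\theta=4$ is indeed the right tool --- but the final accounting does not give $0.826$.  Summing the weights $|\cV_{m_0}|^{-1}$ over the blockers in $\cV_{m_0}$ yields only $N_b/|\cV_{m_0}|$, and your multi-level trick gains nothing: since $A_m$ is obtained from $A_{m_0}$ by running \add\ forward $2^{m_0}-2^m$ steps, any $4$-free path from time $0$ to time $2^{m_0}-1$ in the $A_{m_0}$-configuration restricts to a $4$-free path from time $0$ to time $2^m-1$ in the $A_m$-configuration, so ``$A_m$ is a blocker for some $m\le m_0$'' is equivalent to ``$A_{m_0}$ is a blocker''.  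With $m_0=4$ the paper's computer count gives $N_b=2{,}663{,}229{,}504$ and $|\cV_4|=2^{32}$, so your bound is about $0.620$.

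What you are missing is the \emph{predecessor recursion}, which is the actual content of the paper's proof.  The event that the level-$2^m$ link is degenerate to depth $2^m$ coincides with $\pred_1$ (the seed has a predecessor), and by Lemma~\ref{predecessors-of-seeds} this has probability $1/4$; crucially, \emph{conditionally on $\pred_1$ the predecessor is a fresh uniform seed on $[1,L-1]$}.  One therefore has
\[
p_L\;\ge\;\frac{N_b}{N_n}\cdot\P(\pred_1^C)\;+\;p_{L-2}\cdot\P(\pred_1)
\;=\;\frac{N_b}{N_n}\cdot\frac34+\frac14\,p_{L-2},
\]
and taking $\liminf$ gives $p\ge N_b/N_n$.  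Here $N_n=\tfrac34|\cV_m|$ is the number of non-degenerate links, so the recursion effectively divides by $N_n$ rather than $|\cV_m|$: the degenerate links are not wasted but recycled.  With $m=4$ this is $2{,}663{,}229{,}504/3{,}221{,}225{,}472\approx 0.826$.  The same recursion, with $N_b$ restricted to blockers producing a fixed ether $\eta$, gives the entries of Table~\ref{ex1or3-table}; your alternative via Lemma~\ref{positive-probability} produces only the much cruder $2^{-s-2^{m+1}}$ and cannot recover, e.g., the value $0.6061$ for the zero ether.
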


For an ether $\eta$, its \df{reflection} around the time axis is denoted
by $\bar\eta$. Then $\eta$ is \df{symmetric} if $\bar\eta$ is
equivalent to $\eta$.
Assume that $B$ is the signature of $\eta$ and $\bar B$ its
reflection.
A sufficient condition for symmetry of $\eta$ is that the reflection
$\bar B$
is a periodic shift of $B$. However, this is not a necessary condition:
the ether with signature $B=[7]200020202$ is symmetric as the fourth iteration
of the \textit{1 Or 3} rule applied on $B^\infty$ yields a translation
of $\bar B^\infty$.
Thus, the only nonsymmetric
ether in Table~\ref{ex1or3-table} is the one with signature
$[9]2000202$. In nonsymmetric cases,
our tables combine the frequencies of an ether and its reflection.

\begin{pf*}{Proof of Theorem~\ref{extended-1-or-3}}
Throughout the proof, fix a positive
integer $m\ge1$ and assume that $L\ge3\cdot2^m$.
Using the same notation as in Proposition~\ref{above-voids},
assume that for some $a\ge0$,
the configuration of $\lambda^\bullet$ at some time $t$ on $I=[a+L,
a+L-1+3\cdot2^m]\times\{t\}$ is exactly the string
$A_0=1 \lsquare1 \lsquare0 \lsquare$. For ease of reference,
we will assume $A_0^\infty$ is positioned on $\bZ$ so that $A_0$ is
the configuration in $[0, 3\cdot2^n-1]$.

Our main tool is the map $\Phi\dvtx\bZ_2^{L+1}\to\bZ_2^{3\cdot
2^m}$ that
takes as
argument an initial binary seed $\lambda_0$ supported on $[0,L]$ and
outputs the configuration of
$\lambda$ on $I$. This is a linear map that assigns to every seed with
support in $[0,L]$
its level-$2^m$ link. The matrix of $\Phi$ (in the standard basis) has
row $i$ given by the
segment $[i+2^m, L+i+2^m]$ of $A_0^\infty$, $i=0,\ldots, 3\cdot
2^m-1$. (All matrix and
vector coordinate indices start at 0.) It is easy to see that the
matrix has
rank $2^{m+1}$ and, therefore, its image has cardinality
$2^{2^{m+1}}$. The kernel of $\Phi^*$ has basis vectors
$y^k$, $k=0,\ldots, 2^m-1$, given by $y^k_i=\ind[i\mmod2^m=k]$,
$i=0,\ldots,3\cdot2^m-1$.
Therefore, the image of $\Phi$ is the set
\[
\Phi\bigl(\bZ_2^{L+1}\bigr)= \bigl\{b\in
\bZ_2^{3\cdot2^m}\dvtx b_{i}+b_{2^m+i}+b_{2^{m+1}+i}=0,
\forall i=0,\ldots, 2^m-1 \bigr\}.
\]
A vector in $\bZ_2^{3\cdot2^m}$ is naturally identified with a binary string
of length $3\cdot2^m$ and we will do so for the rest of the proof.
Let $N_n$ be the number of strings in $\Phi(\bZ_2^{L+1})$ that are
nondegenerate to depth $2^m$. Further, let $N_b$ the number of strings in
$\Phi(\bZ_2^{L+1})$ that are nondegenerate and
blockers, for $4$-free paths, to the same depth $2^m$. Observe that, for
$L$ large enough, $\Phi(\bZ_2^{L+1})$ does not depend on $L$, and
consequently neither
do $N_n$ and $N_b$.

Now suppose that $\xi_0$ is a uniform random binary seed on $[0,L]$ and
let $p_L$
be the probability that $\xi$ is a replicator.
We claim that
%
\begin{equation}
\label{pL} \liminf_{L\to\infty}p_L\ge
\frac{N_b}{N_n}.
\end{equation}

Recall that
$\pred_1$ is the event that $\xi_0$ has a predecessor; by Lemma~\ref
{predecessors-and-links},
$\pred_1^C$ is exactly the event that $\Phi(\xi_0)$ is nondegenerate
to depth $2^m$. Furthermore, conditioned on $\pred_1$, the first
predecessor of $\xi_0$ is
a uniformly random binary seed on $[1,L-1]$. Therefore,
%
%
\begin{eqnarray}
\label{pL1} %
p_L&\ge&\P\bigl(\Phi(
\xi_0)\mbox{ is a blocker to depth $2^m$}\mid\pred
_1^C\bigr) \P\bigl(\pred_1^C
\bigr)\nonumber
\\
&&{} + \P(\xi\mbox{ is a replicator}\mid\pred_1) \P(
\pred_1)
\\
&=&\frac{N_b}{N_n}\cdot\frac{3}4 + p_{L-2}\frac{1}4.\nonumber
\end{eqnarray}
Now \eqref{pL} follows by taking $\liminf$ as $L\to\infty$ of the
first and
last expressions
of~\eqref{pL1}. The particular bound was obtained by
a computer for $m=4$: all $2^{32}$ vectors in the range of $\Phi$ were
checked for blocking and nondegeneracy, and the resulting tallies
were $N_n=3,221,225,472$ and $N_b=2,663,229,504$.
This completes the proof for replication probability.

The proof for a lower bound
for a particular ether $\eta$ is identical except in the definition of
$N_b$, which is now the
number of strings in $\Phi(\bZ_2^{L+1})$ that are blockers and
nondegenerate to the
level $2^m$, \textit{and} produce $\eta$. For example, the result for
the zero ether was
$N_b=1,952,489,232$.
\end{pf*}

Table~\ref{ex1or3-table} suggests that spatial and temporal
periods of \textit{Extended 1 Or 3} ethers are powers of 2, and that
the ether
$(2)^\infty$ never appears. This is addressed in our next
two results.

%
\begin{lemma}[(Periodic configurations)] \label{ether-power-2}
Assume that $\lambda_0$ is a spatially
periodic configuration whose
period $\sigma$ divides $3\cdot2^n$, and that $\lambda_t=\lambda_0$
for some $t$. Then $\sigma$ divides $2^n$. Moreover, for $\sigma\ge1$
the temporal
period equals $\sigma/2$.
\end{lemma}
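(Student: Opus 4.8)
The plan is to algebraize the question. Identify a bi-infinite $\{0,1\}$-sequence of period dividing $N:=3\cdot2^{n}$ with an element $\lambda_0(z)=\sum_{x}\lambda_0(x)z^{x}$ of $R:=\mathbb{F}_{2}[z]/(z^{N}-1)$, where $\mathbb{F}_{2}$ is the two-element field, so that one step of \add\ is multiplication by $m(z):=z^{-1}(1+z+z^{2})$. The minimal spatial period of $f\in R$ is the least divisor $\sigma$ of $N$ with $(z^{\sigma}-1)f=0$, and the minimal temporal period of a temporally periodic $\lambda_0$ is the least $t\ge1$ with $(m(z)^{t}-1)\lambda_0=0$. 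Since the unit shift $z^{-1}$ affects neither quantity, I may treat the CA step as multiplication by $1+z+z^{2}$.

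I would begin from the factorization $z^{N}-1=(z-1)^{2^{n}}(z^{2}+z+1)^{2^{n}}$ over $\mathbb{F}_{2}$ (from $z^{3}-1=(z-1)(z^{2}+z+1)$ and Frobenius, with $z^{2}+z+1$ irreducible). By the Chinese Remainder Theorem $R\cong R_1\times R_2$, where $R_1:=\mathbb{F}_{2}[z]/(z-1)^{2^{n}}$ and $R_2:=\mathbb{F}_{2}[z]/(z^{2}+z+1)^{2^{n}}$ are local rings with nilpotent maximal ideal. As $1+z+z^{2}$ equals $1$ at $z=1$, it is a unit in $R_1$; in $R_2$ it generates a power of the maximal ideal, so $m(z)$ is a unit in $R_1$ and nilpotent in $R_2$ with $m(z)^{2^{n}}=0$. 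Now the hypothesis $\lambda_t=\lambda_0$ (for some $t\ge1$) gives $\lambda_0=m(z)^{t'}\lambda_0$ for every multiple $t'$ of $t$; choosing $t'\ge2^{n}$ and reading the $R_2$-coordinate (where $m(z)^{t'}=0$) forces $\lambda_0\in R_1\times0$. Write $\lambda_0\leftrightarrow(a,0)$ with $a\in R_1$.

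Now put $u:=z-1$, so that $R_1=\mathbb{F}_{2}[u]/(u^{2^{n}})$ is a local principal ideal ring with uniformizer $u$; write $a=u^{k}v$ with $v$ a unit and $0\le k\le2^{n}$ (with $k=2^{n}$ meaning $a=0$, the all-zero configuration), and let $\ell:=2^{n}-k$ be the least exponent with $u^{\ell}a=0$. Every divisor of $N$ has the form $2^{j}$ or $3\cdot2^{j}$, and in $R_1$ we have $z^{2^{j}}-1=u^{2^{j}}$ and $z^{3\cdot2^{j}}-1=u^{2^{j}}(z^{2}+z+1)^{2^{j}}$, the latter equal to $u^{2^{j}}$ times a unit; hence $(z^{\sigma}-1)\lambda_0=0$ exactly when $u^{2^{j}}a=0$, i.e.\ when $2^{j}\ge\ell$. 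Thus the minimal spatial period $\sigma$ is the least power of $2$ that is $\ge\ell$, and since $\ell\le2^{n}$ this gives $\sigma\mid2^{n}$ — the first assertion. For the temporal period I would compute, using $z^{-1}=1+u+\cdots+u^{2^{n}-1}$, that $m(z)-1=z^{-1}+z=u^{2}w$ with $w$ a unit, so $m(z)=1+u^{2}w$; then for $t$ with $2$-adic valuation $i$ the characteristic-$2$ binomial theorem together with Frobenius give $m(z)^{t}-1=(u^{2}w)^{2^{i}}\cdot(\text{unit})=u^{2^{i+1}}\cdot(\text{unit})$, so $(m(z)^{t}-1)\lambda_0=0$ exactly when $2^{i+1}\ge\ell$. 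Since $\sigma/2<\ell\le\sigma$ when $\sigma\ge2$, this holds exactly when $(\sigma/2)\mid t$, so the minimal temporal period is $\sigma/2$. (When $\sigma=1$, $\lambda_0$ is constant and the temporal period is $1$.)

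I expect the only conceptual point to be the identification, in the second paragraph, of the ``$3$-periodic direction'' with the factor $R_2$ on which the \add\ operator is nilpotent: a genuinely temporally periodic configuration can carry no $R_2$-component, which is exactly what forces its period to divide $2^{n}$. The rest is $2$-adic valuation bookkeeping inside the local ring $R_1$ — in particular keeping the divisors $2^{j}$ and $3\cdot2^{j}$ of $N$ distinct and checking that the factor $3$ only ever contributes units there — which should be routine but is where sign-free arithmetic slips could creep in.
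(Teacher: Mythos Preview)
Your argument is correct. It takes a genuinely different route from the paper's proof, which is brief and combinatorial: the paper uses the rescaling property (Lemma~\ref{rescaling}) together with duality (Lemma~\ref{duality}) to reduce to $n=1$, then checks by hand that any configuration of exact period~$3$ becomes constant after one step (so cannot return), and handles the temporal-period assertion by reducing via rescaling to the two facts that period-at-most-$2$ configurations are fixed while period-exactly-$4$ configurations are not. You instead linearize over $R=\mathbb{F}_2[z]/(z^{3\cdot 2^n}-1)$, split via CRT into the local factor $R_1$ on which the update $1+z+z^2$ is a unit and the factor $R_2$ on which it is nilpotent, and read off both conclusions from $u$-adic valuations in $R_1\cong\mathbb{F}_2[u]/(u^{2^n})$. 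The paper's route is shorter only because it leans on lemmas already established; yours is self-contained and makes the mechanism transparent (the ``$3$-part'' dies precisely because $1+z+z^2$ generates the maximal ideal of $R_2$), and it would port directly to other additive rules by replacing $1+z+z^2$ with the relevant Laurent polynomial. Your separate treatment of $\sigma=1$ is also appropriate, since the formula $\sigma/2$ does not literally apply there.
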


\begin{pf} By Lemmas~\ref{rescaling} and~\ref{duality},
we may assume $n=1$, and then we check
that any $\lambda_0$ of period 3 leads to a constant
configuration in a single time step. The last assertion follows from
Lemma~\ref{rescaling} and the
following two easily checked facts: (1) if $\lambda_0$ is periodic with
period at
most $2$, then $\lambda_0=\lambda_1$; and (2) if $\lambda_0$ is
periodic with period exactly $4$,
then $\lambda_0\ne\lambda_1$.
\end{pf}
%

%
%
\begin{prop}[(Possible ethers)] \label{ether-not-2}
Assume $\xi_t$ is the \textit{Extended 1 or 3} CA. Suppose that
$\xi_0$
is a replicator with ether $\eta$, and that its level-$2^m$ link is a
blocker to
depth $2^m$ for $4$-free paths. Then $\eta$ has spatial period that is a
power of 2. Also, the signature of $\eta$ is either $0$ or it is
of the form $[a_1]2[a_2]2\cdots[a_k]2$, where $k\ge1$ and each
$[a_i]$ is a string
of $0$s of odd length $a_i$. In particular, $\eta\not\equiv2$.
\end{prop}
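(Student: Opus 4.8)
Here is the approach I would take.

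The plan is to reduce everything to the behaviour of \add\ and of the {\it Extended 1 Or 3\/} rule run from the periodic initial condition $A^\infty$, where $A$ is the level-$2^m$ link of $\xi_0$, and then to argue inside the ring $R:=\bZ_2[x]/(x^N-1)$ with $N:=3\cdot 2^m$. Since $\xi_0$ is a replicator with ether $\eta$ whose level-$2^m$ link $A$ blocks $4$-free paths to depth $2^m$, Lemma~\ref{blocker-ether} applied to the ether that $A$ produces, together with the fact that a replicator has a unique ether, shows that $A$ produces $\eta$; that is, the {\it Extended 1 Or 3\/} configuration started from $\xi_0=A^\infty$ equals a translate of the signature $B$ of $\eta$ at some time. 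In $R$ the link $A$ lies in the ideal generated by $(x-1)^{2^m}$ (this is the identification of the image of $\Phi$ in the proof of Theorem~\ref{extended-1-or-3}; it also follows from Lemma~\ref{above-voids-general} and Lemma~\ref{voids}). One step of \add\ is multiplication by $M:=x^{-1}(1+x+x^2)$, and $(1+x+x^2)^{2^m}(x-1)^{2^m}=(x^3-1)^{2^m}=x^N-1=0$ in $R$, so the first level of $\xi$ (from $A^\infty$) vanishes identically by time $2^m$.

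Next I would examine $\xi$ at the first time $t_0$ with the first level $\equiv 0$. If $A=0$ then $\xi_0\equiv0$ is quiescent, $\eta\equiv0$, and $B=0$; so assume $A\ne0$, whence $t_0\ge1$ and $\lambda_{t_0-1}$ is a nonzero element of $\ker M$, which is the two‑dimensional subspace of $R$ spanned by the translates of the period‑$3$ word $110$ — equivalently, by Lemma~\ref{predecessor-of-zeros}, $\lambda_{t_0-1}$ is a translate of $(110)^\infty$. Thus $\xi_{t_0-1}$ carries $1$ on a translate of the $(110)^\infty$ pattern and $0$ or $2$ on the remaining cells; letting $u\in\{0,1\}^{\bZ}$ record which of those cells carries a $2$, $u$ is periodic with period dividing $2^m$, and a direct check of the rule at time $t_0$ (both first‑level neighbours being $0$, so the occupation count reduces to the number of $2$s among $\xi_{t_0-1}(x-1),\xi_{t_0-1}(x),\xi_{t_0-1}(x+1)$) gives $\psi_0:=\ind[\xi_{t_0}=2]=x(1+x+x^2)u(x^3)$ in $R$, up to the translation fixed above. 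From time $t_0$ on the first level stays $\equiv0$, and with a dead first level the {\it Extended 1 Or 3\/} rule restricted to $2$s is exactly \add; hence $\psi_t:=\ind[\xi_{t_0+t}=2]$ satisfies $\psi_t=x^{1-t}(1+x+x^2)^{t+1}u(x^3)$.

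That the spatial period of $\eta$ is a power of $2$ then follows at once. Over $\bZ_2$ one has $(1+x+x^2)^{2^m}=1+x^{2^m}+x^{2^{m+1}}$, and this element of $R$ is idempotent and is the projector onto the local factor $\bZ_2[x]/(x-1)^{2^m}$ of $R$ (on which $1+x+x^2$ is a unit), killing the complementary factor $\bZ_2[x]/(1+x+x^2)^{2^m}$ (on which $1+x+x^2$ is nilpotent). Hence $\psi_t$ lies in the ideal generated by $1+x^{2^m}+x^{2^{m+1}}$ for every $t\ge2^m$; and since $(1+x^{2^m})(1+x^{2^m}+x^{2^{m+1}})=1+x^N=0$ in $R$, every member of that ideal has period dividing $2^m$. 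As $B$ is the shortest period‑word occurring among the rows of $\eta$, $|B|$ is a power of $2$.

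It remains to obtain the shape of $B$ and, in particular, $\eta\not\equiv2$. Here I would argue that among the rows of the ether — which form a single cyclic \add-orbit of configurations of period a power of $2$ — there is a distinguished row on which every maximal run of $0$s has odd length, equivalently whose $2$s all lie in one parity class, equivalently which has no two adjacent $2$s and all $0$-gaps odd; and that this row realises the lexicographic minimum, so that $B=[a_1]2[a_2]2\cdots[a_k]2$ with each $a_i$ odd (in particular $B\ne2$, i.e.\ $\eta\not\equiv2$). The extremal row should be pinned down using the void structure of $\lambdasingle$ from Section~\ref{blob}: above a sufficiently wide void the ether fills an inverted triangle of $0$s, and the collapse of the first level through the $(110)^\infty$ stage, together with the formula $\psi_0=x(1+x+x^2)u(x^3)$, constrains the $2$-pattern precisely where it is first created, forbidding adjacent $2$s and even $0$-gaps in that row. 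I expect this last step to be the main obstacle: a generic recurrent \add-orbit of period a power of $2$ does contain rows with even $0$-gaps or with adjacent $1$s (for instance the $2$-cycle containing $(1000)^\infty$), so the argument must exploit how the ether is produced — the link lying in the ideal $((x-1)^{2^m})$ and the passage through $(110)^\infty$ — rather than recurrence alone.
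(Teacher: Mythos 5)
Your treatment of the first claim is correct and complete: the identification of the image of $\Phi$ with the ideal $((x-1)^{2^m})$ in $\bZ_2[x]/(x^N-1)$, the death of the first level by time $2^m$, the fact that $\lambda_{t_0-1}$ must be a translate of $(110)^\infty$, the additivity of the $2$-dynamics once the first level is dead, and the idempotent $1+x^{2^m}+x^{2^{m+1}}$ forcing spatial period dividing $2^m$ are all sound. This is an algebraic repackaging of what the paper gets from Lemma~\ref{ether-power-2} together with Lemma~\ref{above-voids-general} (the paper reduces to the period-$3$ case via Lemma~\ref{rescaling} instead of splitting the group ring), and either route is fine. (One small slip: with $\ell=r=0$ the count $N_{12}(\ell,r,b,c,d)$ equals $2+N_2(b,c,d)$, not $N_2(b,c,d)$; the condition $N_{12}\in\{1,3\}$ becomes $N_2=1$, which happens to agree with your xor formula only because at most one of three consecutive cells can hold a $2$ under $(110)^\infty$.)

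The second claim is where you have a genuine gap, and your framework as set up cannot close it. With $u$ an \emph{arbitrary} subset of the $0$-positions of $(110)^\infty$, the formula $\psi_0=x(1+x+x^2)u(x^3)$ admits $u\equiv 1$, which gives $\xi_{t_0}\equiv 2$ and hence $\eta\equiv 2$ --- exactly the conclusion the proposition excludes. So some constraint on $u$ is indispensable, and it does not come from recurrence of the orbit or from the void structure alone; it comes from the row $\lambda_{t_0-2}$, one step \emph{before} the $(110)^\infty$ stage. Up to translation there are exactly two spatially periodic first-level predecessors of $(110)^\infty$, namely $(111100)^\infty$ and $(010001)^\infty$, and running the {\it Extended 1 Or 3\/} rule across that step one finds that the count $N_{12}(\ell,r,b,c,d)$ is already at least $4$ at two out of every three $0$-positions of $\lambda_{t_0-1}$; hence $2$s at time $t_0-1$ are confined to every \emph{other} $0$-position, a subset of $6\bZ$ after translation (and in the $(111100)^\infty$ case there are no $2$s at all, giving $\eta\equiv 0$). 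Once the $2$s sit in a single parity class at some time after the first level is dead, Lemma~\ref{rescaling} and the additivity of their subsequent \add\ evolution keep them in a single parity class at every second time step thereafter, which is precisely the statement that the $0$-gaps are all odd; the signature form and $\eta\not\equiv 2$ follow. This two-row predecessor analysis is the step your proposal is missing, and as you yourself suspected, no argument that sees only the $(110)^\infty$ row and the recurrent orbit can substitute for it.
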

%

\begin{pf}
The first claim follows from the previous lemma and
Theorem~\ref{above-voids-general}, so we proceed to prove the second
claim. If
$\lambda_{t}\equiv0$, but $\lambda_{t-1}\not\equiv0$, then there are,
up to
translation, exactly two possibilities for $\lambda_{t-1}$ and
$\lambda_{t-2}$:
\begin{eqnarray*}
&&\cdots1 1 1 1 0 0 1 1 1 1 0 0 \cdots\qquad \cdots0 1 0 0 0 1 0 1 0 0 0 1 \cdots
\\
&&\cdots0 1 1 0 1 1 0 1 1 0 1 1 \cdots\qquad \cdots0 1 1 0 1 1 0 1 1 0 1 1 \cdots
\end{eqnarray*}

Assume that the seed $\xi_0$ is such that $\delta(\xi_0)$ has
exactly $k$
predecessors. (Recall that $\delta(a):=\ind[a=1]$.) Then, for any
$n$, the
state of $\delta(\xi)$ on $[C-1, 2^n-C+1]\times\{2^n-k-2,2^n-k-1\}$ is
a segment
of one of the two configurations above. (Here, $C$ is a constant that depends
only on $L$.) By considering $4$-free paths, we see that the left
configuration implies $\xi_{2^n-k}\equiv0$ on $[C, 2^n-C]$, while for right
one implies that one $\xi_{2^n-k}$ vanishes outside $6\bZ\cap[C, 2^n-C]$
(after a suitable translation). As
$2$s evolve according to the \textit{1 Or 3} rule in the absence of
$1$s, the
positions of $2$s started from a subset of $2\bZ$ are a subset of
$2\bZ
$ at
all even times (by Lemma~\ref{rescaling}). The claimed form of the
signature follows.
\end{pf}

Lower bounds for ether probabilities can also be obtained for \textit
{Piggyback},
with the same proof as for Theorem~\ref{extended-1-or-3}.

%
\begin{theorem}[(Ether probabilities for \pig)]\label{piggyback-lb}
Let $\xi_t$ be the \textit{Piggyback} web CA, started from a
uniformly random
seed of $0$s and $1$s on $[0,L]$. Lower bounds on
\[
\liminf_{L\to\infty} \P(\xi\mbox{ is a replicator with ether $\eta$})
\]
are as in
Table~\ref{piggyback-table}.
\end{theorem}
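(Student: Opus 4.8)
The plan is to run the argument of Theorem~\ref{extended-1-or-3} essentially verbatim, with $4$-free paths replaced everywhere by wide paths, since \pig\ is wide-compliant. Because \pig\ in fact satisfies the hypotheses of Theorem~\ref{showcase}, the whole machinery of Sections~\ref{ca}--\ref{ethers} applies directly, so the only genuinely new content is a finite computer enumeration producing the entries of Table~\ref{piggyback-table}.

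First I would fix an integer $m\ge 1$ and assume $L\ge 3\cdot 2^m$, and, as in the proof of Theorem~\ref{extended-1-or-3}, introduce the linear map $\Phi:\bZ_2^{L+1}\to\bZ_2^{3\cdot 2^m}$ sending a binary seed on $[0,L]$ to its level-$2^m$ link. By Proposition~\ref{above-voids} the rows of its matrix are consecutive windows of length $L+1$ of the periodic string $(1\,\lsquare\,1\,\lsquare\,0\,\lsquare)^\infty$, so the identical computation shows that the image of $\Phi$ is the subspace
$$
\Phi(\bZ_2^{L+1})=\bigl\{b\in\bZ_2^{3\cdot 2^m}:\ b_i+b_{2^m+i}+b_{2^{m+1}+i}=0,\ 0\le i<2^m\bigr\},
$$
of cardinality $2^{2^{m+1}}$ and independent of $L$ once $L$ is large.

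Second, I would set up the recursion. Let $N_n$ be the number of strings in $\Phi(\bZ_2^{L+1})$ that are non-degenerate to depth $2^m$, and, for a fixed ether $\eta$, let $N_b=N_b(\eta)$ be the number of those strings that are in addition blockers to depth $2^m$ for \emph{wide} paths and produce $\eta$. Writing $p_L=\P(\xi\text{ is a replicator with ether }\eta)$ for a uniformly random binary seed on $[0,L]$, one has, exactly as in the proof of Theorem~\ref{extended-1-or-3}, that $\pred_1^C$ is the event that $\Phi(\xi_0)$ is non-degenerate to depth $2^m$; on this event the link is uniform among the $N_n$ strings, and by Lemma~\ref{blocker-ether} (where the wide-compliance of \pig\ is essential) being a wide blocker that produces $\eta$ forces $\xi$ to be a replicator with ether $\eta$; while conditioned on $\pred_1$ the first predecessor is a uniformly random binary seed on $[1,L-1]$ by Lemma~\ref{predecessors-of-seeds}. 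Hence
$$
p_L\ \ge\ \frac{N_b}{N_n}\cdot\frac34\ +\ p_{L-2}\cdot\frac14 ,
$$
and taking $\liminf_{L\to\infty}$ gives $\liminf_{L\to\infty}p_L\ge N_b/N_n$. (Equivalently this is Lemma~\ref{positive-probability} together with Proposition~\ref{no-crossing-into-void}, which provides a positive density of blockers.) Note that, unlike \webxor, \pig\ has spontaneous birth, so Lemma~\ref{blocker-ether} does \emph{not} force $\eta\equiv 0$; this is why Table~\ref{piggyback-table} has several nonzero ethers.

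Finally, the numbers come from a finite computation at a fixed small $m$ (e.g.\ $m=3$ or $m=4$): enumerate all $2^{2^{m+1}}$ strings $A\in\Phi(\bZ_2^{L+1})$; for each, test non-degeneracy to depth $2^m$ and, by inspecting a bounded window of the spatially periodic \add\ configuration started from $A^\infty$, test whether $A$ is a wide blocker to depth $2^m$; for each surviving $A$, run \pig\ from $\xi_0=A^\infty$ and read off the ether it produces. Since the $0/2$-layer dynamics of \pig\ in the absence of $1$s is additive, the configuration from $A^\infty$ becomes temporally periodic after a burn-in of manageable size (cf.\ the end of Section~\ref{ca}), so its ether and signature are detected by a bounded simulation; tallying the surviving $A$ by signature gives $N_b(\eta)$, and combining an ether with its reflection where appropriate (as before Table~\ref{ex1or3-table}) yields Table~\ref{piggyback-table}. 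I expect the main obstacle to be bookkeeping rather than mathematics: keeping the \emph{wide} notion of blocker throughout (since \pig\ is wide- but not diagonal-compliant) and correctly identifying the produced ether — i.e.\ detecting the eventual temporal period — which is a finite check made feasible precisely by additivity of the $0/2$-layer.
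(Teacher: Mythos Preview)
Your proposal is correct and follows essentially the same approach as the paper, which explicitly states that the lower bounds for \pig\ are obtained ``with the same proof as for Theorem~\ref{extended-1-or-3}'' (with the computer search carried out at $m=4$). Your substitution of wide paths for $4$-free paths is exactly the required adaptation, since \pig\ is wide-compliant rather than $4$-free-compliant, and Lemma~\ref{blocker-ether} is stated to cover each compliance type.
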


%
\begin{table}
\caption{Ten ethers that emerge from long
random seeds for \textit{Piggyback} with positive asymptotic probability.
The conventions of Table \protect\ref{ex1or3-table} apply}\label
{piggyback-table}
\begin{tabular*}{\textwidth}{@{\extracolsep{\fill}}lcccc@{}}
\hline
\textbf{Ether}&\textbf{Temporal}&\textbf{Spatial}&
\textbf{Density}&\textbf{Lower}\\
\textbf{signature}&\textbf{period}&
\textbf{period}
& \multicolumn{1}{c}{\textbf{of} $\bolds{2}$\textbf{s}}
& \textbf{bound}\\
\hline
0 & 1 & \phantom{0}1 & 0 & 0.5\phantom{000} \\
2 & 1 & \phantom{0}1 & 1 & 0.0398 \\
02 & 1 & \phantom{0}2 & $1/2$ & 0.0142 \\
0002 & 2 & \phantom{0}4 & $1/2$ & 0.0258 \\
{[7]}2 & 4 & \phantom{0}8 & $3/8$ & 0.0099 \\
{[4]}2022 & 4 & \phantom{0}8 & $1/2$ & 0.0303 \\
00020222 & 4 & \phantom{0}8 & $1/2$ & 0.0209 \\
00022222 & 4 & \phantom{0}8 & $5/8$ & 0.1297 \\
0002000200022222 & 8 & 16 & $11/16$ & 0.0362 \\
0002000200202002 & 8 & 16 & $9/16$ & 0.0216 \\
\hline
\end{tabular*}
\end{table}

The computer search with $m=4$ yielded $117$ different
ethers for \textit{Piggyback}
with provably positive asymptotic probability, with their combined probabilities
at least $0.914$. The ethers listed in Table~\ref{piggyback-table}
are the ten with largest lower bounds.
The only nonsymmetric ether among these ten has signature $[4]2022$.
[The initial state
$(00020222)^\infty$ generates its translated reflection in two steps.] We
do not know whether
the asymptotic probability for the zero ether is exactly $1/2$.

\section*{Open problems}

As the earlier discussions indicate, this topic offers a rich supply of open
questions. We highlight a small selection.
\begin{longlist}[(iii)]
%
\item[(i)] In the \add\ cellular automaton started from a uniformly random binary
string on the half-line $[0,\infty)$, what is the growth rate of the
maximum integer $r_t$ for which there is an empty path from
$(-\infty,0)\times\{0\}$ to $(r_t,t)$? Is it the case that $r_t/t\to
1/4$ as $t\to\infty$?
\item[(ii)]
What can be said about
percolation in
the space--time configuration of
other one-dimensional cellular automata started in an invariant measure?
For example, the uniformly random binary string on $\bZ$ is invariant for
\emph{permutative} rules (see \cite{BL} for a definition), including
\textit{Rule 30}. Do there exist infinite diagonal, wide or empty paths?
\item[(iii)] Are there infinitely many different ethers for replicators in the
\textit{Piggyback} cellular automaton? Is there an algorithm that
decides whether a given
ether occurs in some replicator?
\item[(iv)] For two-dimensional \textit{Box 13} solidification CA (see
Section~\ref{prelim} and \cite{GG2}) started from a uniform random seed
in $[0,L]^2$, does the final configuration have rational density with
probability converging to $1$ as $L\to\infty$?
\end{longlist}

\section*{Acknowledgements}
J. Gravner gratefully acknowledges the hospitality of the Theory Group at Microsoft
Research, where most of this work was completed. We thank the referee for a very careful reading and some
useful comments.

%

\printaddresses

\end{document}